\documentclass[12pt]{amsart}
\usepackage{amsfonts}
\usepackage{amssymb,amsthm,amsmath}
\usepackage[mathscr]{eucal}
\usepackage[notcite,notref]{}
\usepackage[latin1]{inputenc}

\textwidth=13.5cm
\textheight=23cm
\hoffset=-1cm

\numberwithin{equation}{section}
\theoremstyle{plain}
\newtheorem{theorem}{Theorem}
\newtheorem{lemma}[theorem]{Lemma}
\newtheorem{corollary}[theorem]{Corollary}

\theoremstyle{definition}
\newtheorem{definition}[theorem]{Definition}

\theoremstyle{remark}
\newtheorem{remark}[theorem]{Remark}
\newtheorem{example}[theorem]{Example}

\title[Regularity of the Hardy-Littlewood maximal operator]{Regularity of the Hardy-Littlewood maximal operator on
 block decreasing functions}

\author{J. M. Aldaz}
\address{PERMANENT ADDRESS: Departamento de Matem\'aticas y Computaci\'on,
Universidad  de La Rioja, 26004 Logro\~no, La Rioja, Spain.}
\email{jesus.munarrizaldaz@unirioja.es}

\address{CURRENT ADDRESS: Departamento de Matem\'aticas,
Universidad  Aut\'onoma de Madrid, Cantoblanco 28049, Madrid, Spain.}
\email{jesus.munarriz@uam.es}

\author{F.J. P\'erez L\'azaro}
\thanks{Research supported in part by grant MTM2006-13000-C03-03 of the DGI, Spain}
\address{Departamento de Matem\'aticas y Computaci\'on,
Universidad de La Rioja, Edificio J.~L.~Vives, Calle Luis de Ulloa
s/n, 26004 Logro\~no, Spain} \email{javier.perezl@unirioja.es}

\subjclass[2000]{Primary: 42B25, 26B30} \keywords{Maximal function,
functions of bounded variation}

\begin{document}
\baselineskip=17pt

\maketitle
\begin{abstract}
We study the Hardy-Littlewood maximal operator defined via an
unconditional norm, acting on block decreasing functions. We show that
the uncentered maximal operator maps block decreasing functions of special bounded variation to functions with
integrable distributional derivatives,
thus improving  their regularity. In the special case of the
maximal operator
defined by the $\ell_\infty$-norm, that is, by averaging over cubes,
the result extends to block decreasing functions of bounded variation, not necessarily
special.
\end{abstract}

\section{Introduction}

The usefulness of the Hardy-Littlewood
maximal function $M$ stems basically from two facts:

1) It is
larger than
the given function, since $|f|\le Mf$ a.e.,
but it is not too large, since $\|Mf\|_p \le c_p \|f\|_p$ for  $1 < p \le \infty$, while on $L^1$, $M$
satisfies a weak type $(1,1)$ inequality.

2) It is more regular than the original function:
If $f$ is measurable, then $Mf$ is lower semicontinuous.

The fact that $Mf$ controls $f$ and its averages over balls (by
definition) together with its $L^p$ boundedness, leads to its frequent
use in chains of inequalities, while its lower semicontinuity
allows one to decompose its level sets using dyadic cubes. This is
the basis of the often applied Calder\'on-Zygmund decomposition: Utilize
$Mf$ as a proxy for $f$, splitting the open set $\{Mf > t\}$ into suitable
disjoint cubes. This might be impossible to do directly with $\{f > t\}$,
since in principle this set is merely measurable.

Regarding derivatives,
the study of the regularity properties of the  Hardy-Littlewood
maximal function is
much more recent. It
was initiated in \cite{Ki} by Juha Kinnunen, who
proved that the centered maximal operator is bounded on the
Sobolev spaces $W^{1,p}(\mathbb{R}^{d})$ for $1<p\le\infty$.  Since then,
a good deal of work has been done within
this line of research, cf. for instance, \cite{KiLi}, \cite{HaOn},
 \cite{KiSa},  \cite{Lu}, \cite{Bu}, \cite{Ko1},
\cite{Ko2}, \cite{Ta}. The overall emerging pattern,
concerning regularity,  seems to
be that the worse $f$ is, the greater the improvement of $Mf$ when
compared to $f$. In fact,
if the functions are ``good", there may not be any improvement at all.
For instance, the maximal function of a $C^1$ function need not be $C^1$,
while the maximal function of a Lipschitz or an $\alpha$-H\"older function
will be Lipschitz or $\alpha$-H\"older, but in general no better
than that, though constants will be lowered (so there is some
quantitative improvement), cf. \cite{ACP}.

In the present paper only the uncentered maximal function will be
considered, since it has better regularity properties than its
centered relative. A model example of this fact is the following:
Let $f$ be the characteristic function of the unit interval in the
real line. Then both $f$ and the centered maximal function of
$f$ are discontinuous functions at $0$ and $1$, while $Mf$ is Lipschitz
on $\mathbb{R}$ with constant 1.

Here, the dimension $d$ will always be at least two.
The one dimensional case was studied in \cite{AlPe}; there we showed
 that given an arbitrary interval $I\subset \mathbb{R}$, if $f: I\to\Bbb R$ is of bounded variation
and  $Df$ denotes its distributional derivative, then
  $Mf$ is absolutely continuous and  $\| DMf\|_{L^1(I)} \le
|Df|( I)$, where $|Df|$ is the total variation of $Df$ (cf. \cite[Theorem 2.5]{AlPe}). Hence, $M$ improves the regularity of
  $BV$ functions, so, like in the case of the Calder\'on-Zygmund
decomposition, $Mf$ can be used as a proxy for $f$, with
the function $DMf$ replacing the measure $Df$. Along these lines,
a Landau type inequality is presented in \cite[Theorem 5.1]{AlPe}.
Of course, having a function as derivative, instead of a singular measure, makes it possible
to consider $\|DMf\|_{L^p}$ for $p > 1$. In turn, this
suggests the possibility of obtaining inequalities of Gagliardo-Nirenberg-Sobolev  for functions less regular than those in the
Sobolev classes.

Thus, it is interesting to try to find higher dimensional versions
of \cite[Theorem 2.5]{AlPe}. In \cite[Theorem 2.19 and Remark 2.20]{AlPe2}, we
showed that the local maximal function $M_R$ (where the radii of
balls is bounded above by $R$) maps  $BV(\mathbb{R}^d)$ boundedly
into $L^1(\mathbb{R}^d)$, with constant of the order of $\log R$.
In fact, even the local, strong maximal function is bounded
from $BV(\mathbb{R}^d)$
into $L^1(\mathbb{R}^d)$ (with constant of the order of $\log^d R$).
However, the derivative of the local, strong maximal function is
not always
 comparable to $\|f\|_{BV}$, cf. \cite[Theorem 2.21]{AlPe2}, so we have  unboundedness of this operator on $BV$.

Regarding the maximal operator, two questions remain open. First,
whether it regularizes functions in $BV$, so if $f\in BV$, then
$Mf$ is $ACL$ (absolutely continuous on lines, the natural generalization
of absolute continuity to $d > 1$), and second, whether the size
of $DMf$ is not too large, i.e., there exists a constant $c$ such
that $|DMf|(\mathbb{R}^d) \le c\|f\|_{BV}$. At this point both
questions seem to be  intractable. We mention,
after  recalling that $W^{1,1}(\mathbb{R}^d)\subset BV (\mathbb{R}^d)$,
 a related and simpler
question from \cite[Question 1]{HaOn}: Is the operator
$f\mapsto |\nabla Mf|$ bounded from the Sobolev space $W^{1,1}(\mathbb{R}^d)$
to $L^1(\mathbb{R}^d)$?

The results presented here are
obtained by restricting ourselves to a smaller class of functions:
The block decreasing (or unconditional decreasing) functions of bounded
variation (which in particular, contain the $W^{1,1}(\mathbb{R}^d)$
block decreasing functions). For these functions Haj\l asz and Onninen's question has a positive answer.

In general, the balls we use when defining the
maximal operator are unconditional.  Specializing to cubes,
we obtain stronger results.

More precisely, let  $f\ge 0$ be block decreasing. We show that  if $Mf$
is defined using unconditional balls, then
 the variation of $f$ controls the variation of $Mf$ (Theorem \ref{t1}).
If $f$ has finite variation, then $Mf$ is continuous a.e.
with respect to the $(d-1)$-dimensional Hausdorff measure
(no unconditionally needed here, cf. Theorem \ref{t2}). Further assumptions on $f$ lead to better results: If $f$ is also of
special bounded variation (so the derivative $Df$ has no Cantor part),
then $Mf$ has an integrable  weak gradient (cf. Theorem \ref{sbv}).
When the maximal function is defined using cubes, $Mf$ has a weak gradient
even if $Df$ has a nontrivial Cantor part (cf. Theorem \ref{cubes}). Identical results hold for the local maximal operator
$M_R$. Using the fact that  $M_R$  maps  $BV(\mathbb{R}^d)$ boundedly
into $L^1(\mathbb{R}^d)$, we obtain boundedness results for $M_R$
from the non-negative block decreasing functions into the functions of bounded
variation, cf. Corollary \ref{cor1}.

\section{Definitions and results}

\begin{definition}  A function $f : \mathbb{R}^d\to [-\infty, \infty]$ is
{\em unconditional} if  for all $x = (x_1,\dots, x_d)\in \mathbb{R}^d$ we have $f(x_1,\ldots,x_d)=f(|x_1|,\ldots,|x_d|)$.
\end{definition}

\begin{definition} Let $f : \mathbb{R}^d\to [-\infty, \infty]$ be unconditional. Then $f$
is {\em block decreasing} if  the restriction of
$f$  to the non-negative cone $[0,\infty)^d$  is decreasing in each variable.
\end{definition}

\begin{remark} Observe that being unconditional  depends on the system of coordinates
 chosen for $\mathbb{R}^d$. A rotation, for instance, may destroy
this property.
The term ``block decreasing" comes from the statistical literature, while in functional
analysis ``unconditional decreasing" is used instead. Radial functions
  with respect to unconditional norms in $\mathbb{R}^d$ (for instance, the $\ell_p$-norms,
  $1\le p\le\infty$) are block decreasing. But in general, a block decreasing function
  need not have convex level sets, in which case it will not be radial with
  respect to any norm. On the other hand, a norm $\nu$ may fail to be
  unconditional; if so, a radial function with respect to $\nu$
  will
  not  be block decreasing.
\end{remark}

Given a norm $\mu$ in $\mathbb{R}^d$, we denote by $B_\mu(y,\delta) := \{x\in\mathbb{R}^d:
\mu (x - y) \le \delta\}$ the closed $\mu$-ball centered at
$y\in\mathbb{R}^d$ and of radius $\delta>0$. Absolute values around a
set denote its $k$-dimensional Lebesgue measure. While the dimension $k$
is not indicated in $|A|$, it will usually be clear from the context.
When doubts may arise, we explicitly  state what $k$ is.

\begin{definition}
Let  $f\in L^1_{loc}(\mathbb{R}^d)$. Then for all $ x\in\mathbb{R}^d$, the uncentered maximal function
 $M_\mu f$ is defined by
 \begin{equation}\label{maxdef}
    M_\mu f(x)=\sup_{\{y\in \mathbb{R}^d, \delta > 0:  x\in B_\mu(y,\delta)\}}\frac{1}{|B_\mu(y,\delta)|}\int_{B_\mu(y,\delta)}|f(u)|du.
  \end{equation}
Let $R > 0$ be fixed. The {\em local} maximal function $M_{R,\mu}f$ is defined by
imposing an extra condition on the radius of balls: $\delta \le R$. Other than that, the definition is identical to (\ref{maxdef}).
\end{definition}

We write $M_p$ instead of $M_\mu$ in the special case where $1\le p \le \infty$ and $\mu$ is an $\ell_p$-norm, i.e., given by
$\|x\|_p :=\left( |x_1|^p+ |x_2|^p+\dots + |x_d|^p\right)^{1/p}$ when
$1\le p < \infty$, and  by  $\|x\|_\infty :=\max_{1 \le i \le d} \left\{ |x_1|,  |x_2|, \dots , |x_d|\right\}$. Likewise, we write $B_p$ instead of
$B_\mu$ for balls.

\begin{remark} Since all
 norms on $\mathbb{R}^d$ are equivalent, maximal functions defined using
 different norms are always pointwise comparable. However, comparability  yields no information
about  regularity properties, or the size of derivatives, if they
exist in some appropriate sense.
\end{remark}

From now on, we assume that {\em all} functions appearing in this
paper are {\em locally integrable}, including functions of the form
$M_\mu f$. It might happen that for some
$f\in L^1_{loc} (\mathbb{R}^d)$, $M_\mu f \equiv\infty$. But then
 $M_\mu f$ is constant and thus, its variation is zero under any
 reasonable notion of variation. So we exclude this
 trivial case from any further consideration. It follows
 that $M_\mu f$ is finite almost everywhere.
 But in general, something else is needed to have local integrability
 of $M_\mu f$.  For the type of
 functions studied in this paper, that is, for block decreasing
 functions of bounded variation, the local integrability of
 $M_\mu f$ is easy to check (cf. Lemma \ref{locint} below);
but some auxiliary results are stated in greater generality,
and for these we {\em assume} local integrability of $M_\mu f$
from the start.

Let $\Omega\subset \mathbb{R}^d$ be   open. The following definition
 is taken from [AFP, p. 119].

\begin{definition}\label{defvar}
For $f\in L^1_{loc}(\Omega)$, the variation $V(f,\Omega)$ of $f$
  in $\Omega$ is given by
  \begin{equation*}
    V(f,\Omega):=\sup \left\{\int_\Omega f \operatorname{div}\phi dx: \phi \in [C^1_c(\Omega)]^d,
    \|\phi\|_\infty\le 1\right\}.
  \end{equation*}
\end{definition}

Suppose $V(f,\Omega) < \infty$, i.e.,
$f$ is of finite variation. If additionally $f\in L^1(\Omega)$, we write $f\in BV(\Omega)$, where $BV$
stands for bounded variation. Integration by parts shows that
if $f$ is continuously
differentiable in $\Omega$, then
$V(f,\Omega)=\int_\Omega|\nabla f|dx$. By Proposition 3.6,
p.120 of [AFP], $V(f,\Omega)<\infty$ if and only if there
exists an $\mathbb{R}^d$-valued Radon measure $Df=(D_1f,\ldots,D_df)$ on
$\Omega$ such that
\begin{equation*}
  \int_\Omega f \operatorname{div}\phi dx=-\int_\Omega \phi d
  Df \qquad \forall \phi \in [C^1_c(\Omega)]^d.
\end{equation*}
That is, the distributional derivative is representable by a
Radon measure $Df$ on $\Omega$ with
total variation
$|Df|(\Omega) < \infty$.
Furthermore, $|Df|(\Omega) = V(f,\Omega)$. The norm of
$f\in BV(\Omega)$ is defined by $\|f\|_{BV(\Omega)}:=
\|f\|_{L^1 (\Omega)} + |Df|(\Omega)$. Note that
$W^{1,1}(\Omega)\subset BV(\Omega)$, and the Sobolev norm on $W^{1,1}(\Omega)$
is simply the restriction to the latter space of the $BV$ norm.
Note also that a function $f\ge 0$ of bounded variation on
$\mathbb{R}^d$ need not
be bounded (provided $d\ge 2$, as we always assume in this paper);
 well known examples exist in $W^{1,1}(\mathbb{R}^d)$.
However, if $f$ is also block decreasing, then the hypothesis
$V(f,\mathbb{R}^d)<\infty$ entails that the {\em precise representative}
$f^*$  of $f$ (defined by taking the limsup in the Lebesgue Differentiation
Theorem)  must be finite except perhaps on a negligible $(d-1)$-dimensional
Hausdorff measurable set.  Since $f$ is block-decreasing, either $f^*(0) = \infty$ or $f^*$ is bounded.
But we must allow the possibility that $f^*(0) = \infty$,
so we will consider functions with values in $[0,\infty]$.
In general we do not assume that $f$ is integrable.

The first theorem of the paper states that the variation of $M_\mu f$
is controlled by the variation of $f$, and the same happens with the
local maximal function $M_{R,\mu} f$, with constant independent of $R$.

\begin{theorem}\label{t1}
  Let $f:\mathbb{R}^d\to [0,\infty]$ be a block decreasing function
  and let $\mu$ be an unconditional norm in $\mathbb{R}^d$. Then
 $V(M_\mu f,\mathbb{R}^d)\le c(\mu,d) V(f,\mathbb{R}^d)$, and,
with the same constant $c(\mu,d)$,
 $V(M_{R,\mu} f,\mathbb{R}^d)\le c(\mu,d) V(f,\mathbb{R}^d)$  for
  every $R > 0$.
\end{theorem}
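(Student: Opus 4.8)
The plan is to reduce the $d$-dimensional estimate to a slice-by-slice computation, which is transparent for block decreasing functions, and then to isolate a single codimension-one maximal inequality carrying all the analytic content.

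\emph{Step 1 (the maximal function is again block decreasing).} First I would check that $M_\mu f$ and $M_{R,\mu}f$ are block decreasing whenever $f$ is. Unconditionality is immediate: for each coordinate reflection $\sigma$ one has $\sigma B_\mu(y,\delta)=B_\mu(\sigma y,\delta)$ because $\mu$ is unconditional, and $f\circ\sigma=f$, so $\sigma$ permutes the competing balls without changing their averages. Monotonicity along each positive semi-axis follows from the usual ``translate toward the origin'' argument for the uncentered operator: given a ball nearly realizing the supremum at $x$, reflect its center into the cone $[0,\infty)^d$ (which leaves the average unchanged, again by unconditionality) and then translate it toward $0$; a short symmetrization, using that $f$ is block decreasing, shows the average does not decrease, and the translated ball is admissible for any coordinatewise-smaller point. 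Since reflections and translations preserve radii, the identical argument applies to $M_{R,\mu}$, so it suffices to treat $M_\mu$: the local case is word for word the same and produces the same constant.

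\emph{Step 2 (reduction to a hyperplane integral).} Writing $g=M_\mu f$, the inequality $|Dg|\le\sum_{i=1}^d|D_ig|$ gives $V(g,\mathbb{R}^d)\le\sum_{i=1}^d V_i(g,\mathbb{R}^d)$, and each partial variation disintegrates as $V_i(g,\mathbb{R}^d)=\int_{\mathbb{R}^{d-1}}\operatorname{var}\bigl(t\mapsto g(\dots,t,\dots)\bigr)\,dx'$. Because $g$ is block decreasing, every slice is even in $t$ and nonincreasing in $|t|$, so its total variation equals $2\bigl(g|_{x_i=0}-g_\infty\bigr)$, where $g_\infty(x')$ is the limit of the slice at infinity. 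Using only $g\ge f$, hence $g_\infty\ge f_\infty$ pointwise, together with the unconditional symmetry (which makes all $d$ directions contribute equally), I obtain
\[
V(M_\mu f,\mathbb{R}^d)\ \le\ 2d\int_{H}\bigl(M_\mu f-f_\infty\bigr)\,dx',\qquad H:=\{x_1=0\}\cong\mathbb{R}^{d-1},
\]
while the same slice computation for $f$ yields $V(f,\mathbb{R}^d)\ge V_1(f,\mathbb{R}^d)=2\int_H(f-f_\infty)\,dx'$. Thus the theorem is reduced to a single trace inequality on $H$.

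\emph{Step 3 (the trace maximal inequality --- the crux).} It remains to prove $\int_H\bigl(M_\mu f-f_\infty\bigr)\,dx'\le c(\mu,d)\int_H\bigl(f-f_\infty\bigr)\,dx'$. Setting $u=M_\mu f|_H-f_\infty\ge0$ and $v=f|_H-f_\infty\ge0$ and applying the layer-cake formula, this follows from the geometric measure estimate
\[
\bigl|H\cap\{u>s\}\bigr|_{d-1}\ \le\ c(\mu,d)\,\bigl|H\cap\{v>s\}\bigr|_{d-1}\qquad\text{for a.e. }s>0 .
\]
The set $\{v>s\}$ is a downward-closed, symmetric subset of $H$, whereas $\{u>s\}$ is the trace on $H$ of the union of all balls whose $f$-average exceeds $f_\infty+s$; each such ball must capture a fixed proportion of a corresponding super-level set of $f$, so a Vitali-type covering argument adapted to the staircase geometry bounds the trace of this union by a dimensional multiple of $|\{v>s\}|_{d-1}$, uniformly in $s$. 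Discarding balls of radius larger than $R$ only shrinks the union, so the bound holds verbatim for $M_{R,\mu}$ with the same constant.

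I expect Step 3 to be the principal obstacle. Since $M_\mu$ is \emph{unbounded} on $L^1$, the comparison cannot be purely soft: it genuinely exploits both the block-decreasing shape of the level sets and the codimension-one gain that makes the restriction to $H$ integrable (concretely, the convergence of $\int_{\mathbb{R}^{d-1}}\min\{1,(\,\cdot\,)^{-d}\}$, as one sees already for $f=\mathbf 1_{[-a,a]^d}$). Local integrability of $M_\mu f$, needed for $V(M_\mu f,\mathbb{R}^d)$ to be meaningful, is easy to verify in the block decreasing $BV$ setting, and since no step sees the radius cutoff except to delete balls, the constant is independent of $R$, yielding both assertions at once.
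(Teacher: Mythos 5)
Your Step 3 is where the argument breaks down, and not for lack of detail: the inequality you reduce the theorem to is false. The claimed single-hyperplane trace inequality
\[
\int_{H}\bigl(M_\mu f-f_\infty\bigr)\,dx'\ \le\ c(\mu,d)\int_{H}\bigl(f-f_\infty\bigr)\,dx',\qquad H=\{x_i=0\},
\]
is exactly the statement that the $i$-th partial variation of $f$ controls the $i$-th partial variation of $M_\mu f$, and the paper exhibits a counterexample in the Remark following the proof of Theorem \ref{t1}: in $d=2$, take $f_m(x_1,x_2)=mg(m|x_1|)g(|x_2|)$ with $g$ nonincreasing, $g(0)=1$, $\|g\|_1=1$ and $\int_0^\infty g(u)\,du/u=\infty$. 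Then $\int_{\{x_2=0\}}f_m\,dx_1=2$ for every $m$, while $\int_{\{x_2=0\}}M_\infty f_m\,dx_1\to\infty$. Your a.e.-in-$s$ level-set comparison $|\{u>s\}|_{d-1}\le c\,|\{v>s\}|_{d-1}$ is stronger still and fails for the same reason: a Vitali-type covering controls the $d$-dimensional measure of $\{M_\mu f>s\}$ but gives no control of its $(d-1)$-dimensional trace on a fixed hyperplane, and mass concentrated in a thin slab transverse to $H$ (as in the example) makes that trace blow up while leaving $\int_H f$ bounded. The correct substitute, which is what the paper actually proves, is the coupled inequality
\[
\sum_{i=1}^d\int_{\{x_i=0\}}M_\mu f\ \le\ c(\mu,d)\sum_{k=1}^d\int_{\{x_k=0\}}f,
\]
that is, the trace of $M_\mu f$ on one coordinate hyperplane must be allowed to borrow from the traces of $f$ on \emph{all} coordinate hyperplanes. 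Establishing this is the real work: the paper splits $H$ into the $d!$ permutation regions $A_\sigma$, compares the radius of a competing cube with the ordered coordinates of the point, and runs a Fubini computation producing the weight $1/\max\{u_{j+1},\dots,u_d\}$, which is then distributed over the several hyperplane traces of $f$. A single covering argument on $H$ cannot see this cross-directional transfer.

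Two smaller points. First, your reduction to the single hyperplane $\{x_1=0\}$ via ``all $d$ directions contribute equally'' is unjustified: an unconditional norm and a block decreasing function need not be symmetric under permutations of the coordinates, so the $d$ hyperplane integrals can all differ; you must keep the sum over $i$ (which is harmless, but it removes any hope of working with one fixed $H$). Second, Steps 1 and 2 are sound and agree with the paper (Lemma \ref{lemma3} and formulas (\ref{equivar})--(\ref{equivar1})), and your observation that the radius cutoff only deletes competitors, so the constant is independent of $R$, is also how the paper handles the local operator.
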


 Theorem \ref{t2} states that the maximal function of a block
 decreasing function of finite
variation is continuous, except perhaps on a negligible $(d-1)$-dimensional
Hausdorff measurable set. The same happens with the local maximal
function. Observe that   unconditionallity of the norm is not
assumed here.

\begin{theorem}\label{t2}
  Let $f:\mathbb{R}^d\to [0,\infty]$ be a block decreasing function such that
   $V(f,\mathbb{R}^d)<\infty$. Let $\mu$ be a norm in $\mathbb{R}^d$,
   and let $R > 0$.
   Then $M_\mu f$ and $M_{R,\mu} f$ are continuous a.e. with respect to $\mathcal{H}^{d-1}$.
\end{theorem}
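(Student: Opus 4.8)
The plan is to prove the equivalent statement that the set of discontinuities of $M_\mu f$ is $\mathcal H^{d-1}$-negligible. Since $M_\mu f$ is a supremum of averages that depend continuously on the center and radius of the ball, it is lower semicontinuous everywhere, so $\liminf_{x\to x_0}M_\mu f(x)\ge M_\mu f(x_0)$ at every point and only upper semicontinuity can fail. First I would isolate the mechanism of a possible jump through the local quantity
\[
\Lambda(x_0):=\limsup_{\substack{y\to x_0\\ \delta\to 0^{+}}}\frac{1}{|B_\mu(y,\delta)|}\int_{B_\mu(y,\delta)}|f|,
\]
the largest average attainable by vanishingly small balls whose centers approach $x_0$, and prove the criterion: $M_\mu f$ is continuous at $x_0$ whenever $M_\mu f(x_0)\ge\Lambda(x_0)$.

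To establish the criterion, take $x_n\to x_0$ with $M_\mu f(x_n)\to L$ and choose near optimal balls $B_\mu(y_n,\delta_n)\ni x_n$. Passing to subsequences, if $\delta_n\to 0$ then $\mu(y_n-x_0)\le\delta_n+\mu(x_n-x_0)\to 0$, so these averages contribute at most $\Lambda(x_0)$ and $L\le\Lambda(x_0)$. If instead $\delta_n\ge\varepsilon>0$, I enlarge each ball concentrically to radius $\delta_n+\mu(x_n-x_0)$; the new ball contains $x_0$, and since $|f|\ge 0$ its average is at least $(\delta_n/(\delta_n+\mu(x_n-x_0)))^{d}$ times the old one, a factor tending to $1$. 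Thus $M_\mu f(x_0)\ge L$. In all cases $L\le\max\{M_\mu f(x_0),\Lambda(x_0)\}$, which gives upper semicontinuity under the hypothesis, hence continuity.

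It remains to show $M_\mu f(x_0)\ge\Lambda(x_0)$ for $\mathcal H^{d-1}$-a.e.\ $x_0$. As $f\in L^1_{loc}$ has finite variation, $f\in BV_{loc}(\mathbb R^d)$, so by the structure theory of $BV$ functions the jump set $J_f$ is $(d-1)$-rectifiable and at $\mathcal H^{d-1}$-a.e.\ point of $J_f$ there are an approximate tangent hyperplane and two one-sided approximate limits; moreover $\{f^{*}=\infty\}$ is $\mathcal H^{d-1}$-negligible by the remark preceding the statement. At an approximate continuity point $\Lambda(x_0)=f^{*}(x_0)\le M_\mu f(x_0)$ trivially, so only the jump points matter. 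Here the block decreasing hypothesis is decisive: the larger trace is the value $v_{\mathrm{in}}(x_0)$ on the side of $J_f$ pointing toward the origin, and since $f$ decreases in each variable one has $f\ge v_{\mathrm{in}}(x_0)$ throughout that side, whence $\Lambda(x_0)=v_{\mathrm{in}}(x_0)$.

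The decisive step is then geometric. At a jump point with tangent hyperplane $H$ (and not in $\{f^{*}=\infty\}$), I blow up at $x_0$: the origin-ward side converges to the half-space bounded by $H$, on which $f\ge v_{\mathrm{in}}(x_0)$. I inscribe a closed $\mu$-ball tangent to $H$ from that side with $x_0\in\partial B$; at small scales $B$ lies in the origin-ward side up to a set of vanishing relative measure, so $\frac{1}{|B|}\int_B|f|\ge v_{\mathrm{in}}(x_0)-\varepsilon$. Since $x_0\in B$ this forces $M_\mu f(x_0)\ge v_{\mathrm{in}}(x_0)=\Lambda(x_0)$, and the criterion yields continuity at $x_0$. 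The discontinuity set is thus contained in $\{f^{*}=\infty\}$, the non-tangent remainder of $J_f$, and a lower-dimensional set of higher-order singularities, all $\mathcal H^{d-1}$-null. The local operator $M_{R,\mu}$ is treated verbatim, as the enlargement and the inscribed ball live at arbitrarily small scales $\le R$ and the large-scale subsequence cannot occur. The main obstacle I foresee is precisely this last step: making the blow-up and the inscribed tangent ball rigorous for a general, possibly non-smooth and non-unconditional norm $\mu$, and verifying that the set of jump points where no ball through $x_0$ captures the inner trace is genuinely $\mathcal H^{d-1}$-negligible.
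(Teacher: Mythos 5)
Your architecture runs parallel to the paper's: your continuity criterion ($M_\mu f$ is continuous at $x_0$ once $M_\mu f(x_0)\ge\Lambda(x_0)$) is a refinement of Lemma 3.4 of [AlPe], which the paper invokes, and your ``decisive geometric step'' at jump points --- inscribing a $\mu$-ball through $x_0$ inside the half-ball carrying the larger trace --- is exactly the content of Lemma \ref{conl3} combined with Theorem \ref{densidadvspromedio}(ii) and Jensen's inequality (the worry you raise about general norms is resolved there by pure convexity: translate the largest $\mu$-ball contained in $B_2(x_0,r/4)$ until it touches the hyperplane at $x_0$ from the correct side). The genuine gap lies in the other half of your reduction, namely the \emph{upper} estimate for $\Lambda(x_0)$. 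You assert that at approximate continuity points $\Lambda(x_0)=f^*(x_0)$ ``trivially'', and at jump points you deduce $\Lambda(x_0)=v_{\mathrm{in}}(x_0)$ from the bound $f\ge v_{\mathrm{in}}(x_0)$ on the origin-ward side. Neither step is justified: the inequality $f\ge v_{\mathrm{in}}$ only yields $\Lambda(x_0)\ge v_{\mathrm{in}}(x_0)$, which is the direction you do not need, and for a general nonnegative function of finite variation the identity $\Lambda(x_0)=f^*(x_0)$ at approximate continuity points is false. Your $\Lambda$ admits balls that are tiny relative to their distance from $x_0$; a sum of thin tall spikes $\sum_n a_n\chi_{B(p_n,r_n)}$ with $p_n\to x_0$, $a_n\to\infty$, $a_n r_n^{d-1}$ summable and $r_n$ decaying much faster than $|p_n-x_0|$ has $x_0$ as an approximate continuity point with $f^*(x_0)=0$, yet $\Lambda(x_0)=\infty$.

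What rescues the upper bound is precisely the block decreasing hypothesis, which you invoke at jump points but not where it is actually indispensable. For $x_0$ in an open coordinate cone, monotonicity gives $f\le f_{sup}(x_0-\eta\mathbf{1})$ a.e.\ on every sufficiently small ball near $x_0$, and the upper semicontinuity of $f_{sup}$ at such points (Lemma \ref{fsuppos}, itself a density argument resting on monotonicity) then yields $\Lambda(x_0)\le f_{sup}(x_0)$; the points lying on the coordinate hyperplanes require a separate a.e.\ argument (Lemma \ref{lem0}). Without this step the reduction to the jump set does not go through, and with it your evaluation of $\Lambda$ becomes the two-sided statement $\Lambda(x_0)=f_{sup}(x_0)$ at $\mathcal{H}^{d-1}$-a.e.\ point, after which your geometric step (the paper's Lemma \ref{conl2}) gives $f_{sup}(x_0)\le M_\mu f(x_0)$ and the proof closes. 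A smaller repair: for $M_{R,\mu}$ the large-radius subsequence certainly can occur (radii in $[\varepsilon_0,R]$), and the concentric enlargement may then exceed $R$; one should instead translate the near-optimal ball by $x_0-x_n$ and use $L^1$-continuity of translation on a fixed compact set, which is legitimate since the radii are bounded below along that subsequence.
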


It is well known (see \cite[pp. 184--186]{AFP}) that if $f\in
L^1_{loc}(\mathbb{R}^d)$ and $V(f,\mathbb{R}^d)<\infty$, then the
distributional
derivative $Df$ of $f$  can be decomposed
in three parts,
\begin{equation*}
  Df=D^af+D^jf+D^c f,
\end{equation*}
where
 $D^af$ is
absolutely continuous, $D^jf$ is the {\em jump part} of $Df$,
its restriction to the jump set of $f$ (to be defined below, cf.
Definition \ref{jumpset}), and $D^cf$ is
the {\em Cantor part} of the measure, the singular part of
$Df$ that lives on the set where $f$ is
approximately continuous (cf. \cite[p. 160]{AFP} for the definition
of approximate continuity). The functions $f:\mathbb{R}^d\to [-\infty, \infty]$ of bounded variation for
which $D^cf=0$ are called functions
of {\em special
bounded variation}, and denoted by $SBV(\mathbb{R}^d)$. If both $D^cf$ and $D^jf$ vanish, then $f$
 is in the Sobolev space $W^{1,1}(\mathbb{R}^d)$, and
 for every measurable set $A$,
$Df (A) = \int_A \nabla f$.

 \begin{example} \label{square} In order to illustrate some notions that already
 have been defined and others that will appear later on, and also to explain the
 terminology, consider
 the following simple example. Let $f$ be
 the characteristic function  of the unit square $[-1/2, 1/2]^2\subset\mathbb{R}^2$. Then $f$ is a block decreasing, $BV$ function,
with $|Df| = |D^j f|$ and $|Df|(\mathbb{R}^2) = 4$, the length of the
boundary of the square. This boundary is also the jump set of $f$
(cf. Definition \ref{jumpset} below). And $|Df|= |D^j f|$ is just the linear Lebesgue measure
on the jump set. Since $D^cf=0$, $f$ is actually a $SBV$ function.
 The {\em centered} maximal function of $f$ has the
same jump set as $f$, though the jumps are smaller, and the uncentered
maximal function $M_\infty f$ associated to cubes has empty jump set.
For a general norm $\mu$, we know from the preceding theorem that
the jump set of  $M_\mu f$ has linear measure at most zero. In fact, it is easy
to see that the
jump set of  $M_\mu f$ contains, at most, the four corners of the square.
 \end{example}

The jump set of a function is obviously disjoint with the set of its continuity
points.  From the preceding theorem, together
with the fact that if $E\subset \mathbb{R}^d$ has $\mathcal{H}^{d-1}$-measure
zero then $D^j f (E) = 0$ (cf. \cite[Formula 3.90, p. 184]{AFP}),
 we obtain the following corollary. It says that essentially
(with respect to $\mathcal{H}^{d-1}$) $M_\mu f$ has no jumps.

\begin{corollary}\label{nojump}
  Let $f$ be a locally integrable, non-negative block decreasing function such that
   $V(f,\mathbb{R}^d)<\infty$. Let $\mu$ be a norm in $\mathbb{R}^d$,
   and let $R > 0$.
   Then $D^j M_\mu f = 0$, and $D^j M_{R,\mu} f = 0$.
\end{corollary}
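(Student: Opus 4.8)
The plan is to deduce the corollary directly from Theorems \ref{t1} and \ref{t2}, together with the absolute continuity of the jump part of the derivative with respect to $\mathcal{H}^{d-1}$. First I would check that $M_\mu f$ is a legitimate object to which the three-part decomposition applies. By Theorem \ref{t1} we have $V(M_\mu f,\mathbb{R}^d)\le c(\mu,d)V(f,\mathbb{R}^d)<\infty$, and by our standing assumption $M_\mu f\in L^1_{loc}(\mathbb{R}^d)$. Hence $DM_\mu f$ is representable by a Radon measure admitting the decomposition $DM_\mu f = D^aM_\mu f + D^jM_\mu f + D^cM_\mu f$, and the jump part $D^jM_\mu f$ is by definition the restriction of $DM_\mu f$ to the jump set $J_{M_\mu f}$.

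Next I would locate where the jump part can live. Since $D^jM_\mu f$ is concentrated on $J_{M_\mu f}$, it suffices to show that $\mathcal{H}^{d-1}(J_{M_\mu f})=0$. The jump set consists of points at which $M_\mu f$ has two distinct one-sided approximate limits; in particular, every such point is a discontinuity point of $M_\mu f$, because a point of classical continuity is automatically a point of approximate continuity and therefore cannot belong to the jump set (this is the remark immediately preceding the statement). Thus $J_{M_\mu f}$ is contained in the discontinuity set of $M_\mu f$, which by Theorem \ref{t2} is $\mathcal{H}^{d-1}$-negligible. Consequently $\mathcal{H}^{d-1}(J_{M_\mu f})=0$.

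Finally I would invoke \cite[Formula 3.90, p. 184]{AFP}: the jump part of the derivative of a finite-variation function vanishes on $\mathcal{H}^{d-1}$-null sets, i.e. $D^jM_\mu f(E)=0$ whenever $\mathcal{H}^{d-1}(E)=0$. Applying this with $E=J_{M_\mu f}$ and using that $D^jM_\mu f$ is concentrated on $J_{M_\mu f}$ yields $D^jM_\mu f=0$. The argument for the local operator is identical: Theorem \ref{t1} supplies finite variation for $M_{R,\mu}f$ and Theorem \ref{t2} supplies continuity $\mathcal{H}^{d-1}$-a.e., so the same three steps give $D^jM_{R,\mu}f=0$.

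I do not anticipate a genuine obstacle, as the corollary is essentially a repackaging of Theorems \ref{t1} and \ref{t2}. The only point meriting care is the inclusion of $J_{M_\mu f}$ in the discontinuity set of $M_\mu f$, which is where the passage from the \emph{topological} statement of Theorem \ref{t2} (continuity $\mathcal{H}^{d-1}$-a.e.) to the \emph{measure-theoretic} conclusion (vanishing of $D^jM_\mu f$) takes place; this inclusion is immediate from the definition of the jump set via one-sided approximate limits.
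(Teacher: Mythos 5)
Your proposal is correct and follows essentially the same route as the paper: the paper derives the corollary in one sentence by combining Theorem \ref{t2} (continuity $\mathcal{H}^{d-1}$-a.e., hence $\mathcal{H}^{d-1}(J_{M_\mu f})=0$ since the jump set is disjoint from the continuity set) with \cite[Formula 3.90]{AFP}. Your additional preliminary check that $V(M_\mu f,\mathbb{R}^d)<\infty$ via Theorem \ref{t1}, so that the three-part decomposition of $DM_\mu f$ is available at all, is a reasonable piece of bookkeeping that the paper leaves implicit.
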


 The integrability of $f$ is not assumed
 in the next result, so it deals
 with non-negative block decreasing functions slightly
 more general than those in $SBV(\mathbb{R}^d)$.

\begin{theorem}\label{sbv}
  Let $f:\mathbb{R}^d\to [0, \infty]$ be a block decreasing function with
   $V(f,\mathbb{R}^d)<\infty$ and $|D^c f| = 0$. Let $\mu$ be an
  unconditional norm in $\mathbb{R}^d$, and let $R > 0$. Then $M_\mu f$ has a weak gradient
  $\nabla M_\mu f$ in $L^1$, and there exists a constant $c(\mu,d) > 0$
  such that
  \begin{equation*}
    \int_{\mathbb{R}^d}|\nabla M_\mu f(x)|dx\le
    c(\mu,d) V(f,\mathbb{R}^d).
  \end{equation*}
The same result, with the same constant $c(\mu,d)$, holds for $M_{R,\mu} f$.
\end{theorem}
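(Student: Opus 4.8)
The plan is to deduce the theorem from the preceding results by reducing it to the vanishing of the Cantor part of $DM_\mu f$. By Theorem \ref{t1} the function $M_\mu f$ has finite variation, so its distributional derivative is a Radon measure admitting the decomposition $DM_\mu f=D^aM_\mu f+D^jM_\mu f+D^cM_\mu f$; by Corollary \ref{nojump} the jump part vanishes, leaving $DM_\mu f=D^aM_\mu f+D^cM_\mu f$. Thus it suffices to prove $D^cM_\mu f=0$: once this is known, $DM_\mu f$ is absolutely continuous, its density is a weak gradient $\nabla M_\mu f\in L^1$, and the asserted inequality is immediate, since $\int_{\mathbb R^d}|\nabla M_\mu f|=|DM_\mu f|(\mathbb R^d)=V(M_\mu f,\mathbb R^d)\le c(\mu,d)V(f,\mathbb R^d)$ by Theorem \ref{t1}.

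Before attacking the Cantor part I would record two reductions. First, because $\mu$ is unconditional and $f$ is block decreasing, $M_\mu f$ is itself block decreasing; this confines the essential analysis to the positive cone, where $M_\mu f$ is monotone in each variable, and isolates the only possible singularity of $f$ at the origin, a single point of zero $\mathcal H^{d-1}$-measure which the Cantor part cannot charge (recall $D^c$ vanishes on sets of finite $\mathcal H^{d-1}$-measure). Second, I split $\mathbb R^d$ into the contact set $E_1=\{M_\mu f=f^*\}$ and its complement $E_2=\{M_\mu f>f^*\}$. It then remains to show that $D^cM_\mu f$ charges neither $E_2$ nor $E_1$.

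At each point $x_0\in E_2$ the supremum defining $M_\mu f(x_0)$ is attained at a genuine ball $B_\mu(y_0,\delta_0)$ with $\delta_0>0$ (attainment following from a standard compactness argument, using that the averages depend continuously on $(y,\delta)$ when $\delta$ is bounded below). Covering $E_2\setminus\{0\}$ by countably many pieces on which the optimal radii are bounded below by some $\delta_0>0$ and $M_\mu f$ is bounded above, the key estimate is elementary: if $\mu(x-x_0)\le\eta$ then $x\in B_\mu(y_0,\delta_0+\eta)$, whence $M_\mu f(x)\ge|B_\mu(y_0,\delta_0+\eta)|^{-1}\int_{B_\mu(y_0,\delta_0+\eta)}|f|\ge(1+\eta/\delta_0)^{-d}M_\mu f(x_0)$, since $|f|\ge 0$ and the dilated ball has volume $(1+\eta/\delta_0)^d|B_\mu(y_0,\delta_0)|$. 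This yields a one-sided Lipschitz bound, and exchanging the roles of $x$ and $x_0$ yields the other side, so $M_\mu f$ agrees near $\mathcal H^{d-1}$-a.e. point of $E_2\setminus\{0\}$ with a Lipschitz function. There $M_\mu f$ is approximately differentiable, which is incompatible with $D^cM_\mu f$ being carried by that location; hence $D^cM_\mu f\,\llcorner\,E_2=0$.

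The heart of the matter, and the step I expect to be the main obstacle, is showing $D^cM_\mu f\,\llcorner\,E_1=0$, where the hypothesis $|D^cf|=0$ must finally be used. On $E_1$ the precise representatives of $M_\mu f$ and $f$ coincide, so writing $w:=M_\mu f-f$ one has $E_1=\{w^*=0\}$ and $D^cw=D^cM_\mu f$. Mere coincidence of two $BV$ functions does not force their Cantor parts to agree, so this cannot be concluded abstractly; instead I would exploit the structure of the maximal operator. The contact condition $M_\mu f(x)=f^*(x)$ means no ball containing $x$ has average exceeding $f^*(x)$; choosing a small ball lying on the segment from $x$ toward the origin, block monotonicity gives $|f|\ge f^*(x)$ on it, so its average equals $f^*(x)$ only if $f\equiv f^*(x)$ there. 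Thus $f$ is locally constant, of value $f^*(x)$, on a neighborhood toward the origin, so that $Df=0$ and $DM_\mu f=0$ locally on the corresponding region, whence $D^cM_\mu f\,\llcorner\,E_1=0$ up to an $\mathcal H^{d-1}$-null set. Making this local-constancy argument quantitatively precise, and controlling the exceptional set, is the delicate point; the assumption $|D^cf|=0$ enters to guarantee that the remaining singular behaviour of $f$ consists only of jumps, which Corollary \ref{nojump} has already removed from $M_\mu f$. Combining the two cases gives $D^cM_\mu f=0$, completing the proof; since Theorem \ref{t1} and Corollary \ref{nojump} hold verbatim for $M_{R,\mu}$ with the same constant, the identical argument gives the statement for the local maximal operator.
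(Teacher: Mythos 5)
Your global strategy (reduce to $D^cM_\mu f=0$ via Theorem \ref{t1} and Corollary \ref{nojump}, then split $\mathbb{R}^d$ into the contact set $\{M_\mu f=f^*\}$ and its complement) is reasonable and parallels the paper's decomposition $E=\bigcup_{n,k}E_{n,k}$ versus $E^c\subset\{M_\mu f=f^*\}$. But both halves of your argument have genuine gaps. On $E_2=\{M_\mu f>f^*\}$, the Lipschitz estimate on the pieces is fine (it is the paper's Lemma \ref{bolasgrandes}), but the inference ``approximately differentiable there, hence $D^cM_\mu f$ cannot charge that set'' is unsubstantiated: the Cantor--Vitali function is differentiable at Lebesgue-a.e.\ point with derivative $0$ and still has a nontrivial Cantor part, so approximate differentiability on a set does not by itself kill the Cantor part on that set (which is concentrated on a Lebesgue-null subset). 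One needs either a genuine locality theorem for $D^c$ applied to the coincidence set with a Lipschitz function, or, as the paper does, to convert the Lipschitz property into the Lusin $(N)$ property on lines (Lemmas \ref{lemato}--\ref{lemmatres}) and then invoke the Banach--Zarecki variant (Lemma \ref{BVAC}) together with the continuity from Theorem \ref{t2}.

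The more serious gap is on the contact set, which you yourself flag as the delicate point but do not resolve. Your key geometric claim --- that at a contact point $x$ in the open positive cone one can choose a small $\mu$-ball containing $x$, lying ``toward the origin,'' on which block monotonicity forces $f\ge f^*(x)$ --- fails for general unconditional norms. For the Euclidean ball (indeed for any strictly convex ball), no ball containing $x$ is contained in the orthant $\{u:|u_i|\le |x_i|\ \forall i\}$: if $B_2(c,r)$ lies in that orthant then $\|x-c\|_2\ge \sqrt{d}\,r>r$. So every admissible ball protrudes past $x$ in some coordinate, the pointwise lower bound $f\ge f^*(x)$ on the ball is lost, and the conclusion that $f$ is locally constant near contact points is unjustified (your argument is really an $\ell_\infty$ argument, closer in spirit to the proof of Theorem \ref{cubes}). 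Relatedly, your proof never actually uses the hypothesis $|D^cf|=0$: saying that it ``guarantees the remaining singular behaviour of $f$ consists only of jumps, which Corollary \ref{nojump} has already removed'' conflates the singular parts of $Df$ and of $DM_\mu f$; transferring the absence of a Cantor part from $f$ to $M_\mu f$ on the contact set is precisely what must be proved. The paper does this by showing (Lemma \ref{fn0to0}, where $|D^cf|=0$ enters) that a.e.\ line restriction of $f$ maps null sets to null sets, and then pushing this property through $\{M_\mu f=f\}$ to $M_\mu f$ itself, concluding with Banach--Zarecki on lines. As it stands, your proposal does not close either half of the argument.
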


The preceding theorem gives a positive answer to \cite[Question 1]{HaOn} in the special case of
functions $f$ with $|f|$  block decreasing. In
fact, the condition $f\in W^{1,1}(\mathbb{R}^d)$ from \cite[Question 1]{HaOn} is relaxed (to
$V(f,\mathbb{R}^d)<\infty$ and $|D^c f| = 0$).

If $\mu$ happens to be the $\ell_\infty$-norm, then the ``no Cantor part" hypothesis
$|D^c f| = 0$ can be dispensed with. The reason for this
is that block decreasing functions are particularly well adapted to
arguments using  cubes, or more generally, rectangles with sides parallel
to the axes. Even though the next result is stated for cubes only,
it also holds for any norm defined using a fixed rectangle
(with sides parallel
to the axes).

\begin{theorem}\label{cubes}
Let $f:\mathbb{R}^d\to [0, \infty]$ be a block decreasing function
 such that $V(f,\mathbb{R}^d)<\infty$, let $M_\infty f$ be the
maximal function of $f$ defined using  cubes, i.e., $\ell_\infty$-balls, and let $R > 0$.
Then $M_\infty f$ has a weak gradient
$\nabla M_\infty f\in L^1(\mathbb{R}^d)$,
and
\begin{equation*}
    \int_{\mathbb{R}^d}\left|\nabla M_\infty f(x)\right|dx\le
    c_{d}V(f,\mathbb{R}^d).
  \end{equation*}
The same result holds for $M_{R,\infty} f$.
\end{theorem}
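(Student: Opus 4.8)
The plan is to show that $DM_\infty f$ is purely absolutely continuous, so that $M_\infty f$ has a weak gradient in $L^1(\mathbb{R}^d)$, after which the quantitative bound is immediate. By Theorem \ref{t1} we already know $V(M_\infty f,\mathbb{R}^d)\le c_d V(f,\mathbb{R}^d)<\infty$, and by Lemma \ref{locint} the function $M_\infty f$ is locally integrable; hence $M_\infty f\in BV(\mathbb{R}^d)$ and its derivative decomposes as $DM_\infty f=D^aM_\infty f+D^jM_\infty f+D^cM_\infty f$. Corollary \ref{nojump} gives $D^jM_\infty f=0$, so the whole theorem reduces to proving $D^cM_\infty f=0$: once this is established, $DM_\infty f=\nabla M_\infty f\,\mathcal L^d$ with $\nabla M_\infty f\in L^1$, and $\int_{\mathbb{R}^d}|\nabla M_\infty f|=|DM_\infty f|(\mathbb{R}^d)=V(M_\infty f,\mathbb{R}^d)\le c_d V(f,\mathbb{R}^d)$. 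The local version follows verbatim by restricting the side lengths of the competing cubes to be at most $2R$.

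To kill the Cantor part I would prove that $M_\infty f$ is locally Lipschitz on the open set $\mathbb{R}^d\setminus H$, where $H:=\bigcup_{i=1}^d\{x_i=0\}$ is the union of the coordinate hyperplanes. The engine here is a centering lemma special to cubes (and, more generally, to rectangles with sides parallel to the axes): for a block decreasing $f$, a point $x$ in the open positive cone, and a fixed side length $\ell$, among all cubes $Q=\prod_i[c_i,c_i+\ell]$ with $x\in Q$ the integral $\int_Q f$ is maximized by choosing, in each coordinate separately, the centered interval $[-\ell/2,\ell/2]$ when this is admissible (that is, when $\ell\ge 2x_i$) and the right-anchored interval $[x_i-\ell,x_i]$ otherwise. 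This reduction follows from Fubini together with the one-dimensional fact that the integral of an even decreasing function over an interval of fixed length is largest when the interval is centered at the origin; the optimal position in each coordinate is independent of the choices made in the others, precisely because the marginal of $f$ in any one variable remains even and decreasing regardless of the windows fixed in the remaining variables. This is the sense in which cubes are perfectly adapted to block decreasing functions, and it is exactly what breaks down for a general unconditional norm, forcing the hypothesis $|D^cf|=0$ in Theorem \ref{sbv}.

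With this structure in hand, writing $F(x,\ell)$ for the optimal side-$\ell$ average one has $M_\infty f(x)=\sup_{\ell>0}F(x,\ell)$, and the two regime formulas agree when $\ell=2x_i$, so $F$ is continuous across the transitions. On a compact subset $K$ of the open positive cone all coordinates are bounded below by some $\varepsilon>0$, and since $f$ is block decreasing with finite precise representative off the origin, $f$ is bounded on $\{x_i\ge\varepsilon\ \forall i\}$; I would use this to estimate $\partial F/\partial x_i$ in the right-anchored coordinates (a boundary term controlled by the values of $f$ on the outer faces of the cube) and to observe that $F$ does not depend on $x_i$ in the centered coordinates. Taking the supremum over $\ell$ preserves a uniform Lipschitz constant on $K$, so $M_\infty f$ is locally Lipschitz on $\mathbb{R}^d\setminus H$. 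Consequently $DM_\infty f$ is absolutely continuous on $\mathbb{R}^d\setminus H$, whence $D^cM_\infty f$ is concentrated on $H$. Since each hyperplane $\{x_i=0\}$ is $\sigma$-finite with respect to $\mathcal H^{d-1}$ and the Cantor part of any $BV$ function vanishes on sets that are $\sigma$-finite for $\mathcal H^{d-1}$ (cf. [AFP]), we get $|D^cM_\infty f|(H)=0$ and therefore $D^cM_\infty f=0$, completing the argument.

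I expect the Lipschitz estimate of the third step to be the main obstacle. The delicate points are controlling the supremum over $\ell$ uniformly (in particular ruling out, on compact subsets of the open cone, that near-optimal cubes either shrink toward a hyperplane where $f$ is large or grow without bound in a way that spoils continuity), and checking that the coordinatewise boundary terms produced by the right-anchored faces stay bounded as $x$ varies over $K$. Everything else, namely the reduction to $D^cM_\infty f=0$, the centering lemma, and the $\mathcal H^{d-1}$ argument, is comparatively routine given the tools already established.
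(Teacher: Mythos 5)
Your architecture is sound and your central idea coincides with the paper's: cubes are perfectly adapted to block decreasing functions, so $M_\infty f$ is locally Lipschitz on the complement of the coordinate hyperplanes $H$, and the singular parts of $DM_\infty f$ can therefore only live on $H$, where they must vanish. Your endgame is genuinely different from the paper's, though: you kill $D^cM_\infty f$ by concentrating it on $H$ and invoking the fact that Cantor parts vanish on $\sigma$-finite $\mathcal H^{d-1}$ sets (together with Corollary \ref{nojump} for the jump part), whereas the paper proves $M_\infty f$ is $ACL$ line by line, via its Banach--Zarecki variant (Lemma \ref{BVAC}) and the null-sets-to-null-sets Lemma \ref{lemmatres}. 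Both closings are legitimate, and yours is arguably cleaner once local Lipschitzness off $H$ is in hand.

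The problem is that the local Lipschitz estimate --- which you yourself flag as the main obstacle --- is exactly the content of the proof and is left open, and the route you sketch for it is the wrong one. Two points. First, the issue of small near-optimal cubes is not merely delicate, it is the crux, but it is settled by an elementary monotonicity you already have implicitly: for a decreasing $g$ on $[0,\infty)$ and $0<\ell\le a$, the average $\ell^{-1}\int_{a-\ell}^{a}g$ is nondecreasing in $\ell$; applied coordinatewise (Fubini plus Lemma \ref{bdl1}, as in Lemma \ref{bdl2}), it shows that once all coordinates are right-anchored the average only grows as the cube is enlarged toward the origin, so for $x$ with $\min_i x_i\ge\varepsilon$ only sidelengths $\gtrsim\varepsilon$ compete. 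This is precisely the paper's one-line observation that on $[\varepsilon,\infty)^d$ one has containment in some $E_{n,k}$. Second, your proposed bound on $\partial F/\partial x_i$ via face terms is genuinely problematic: the faces are Lebesgue-null, so ``the values of $f$ on the outer faces'' are not defined for $f\in L^1_{loc}$, and $f$ need not be bounded near $H$ (the paper explicitly allows $f^*(0)=\infty$, and for $d\ge3$ the blow-up set can even be a segment of a coordinate axis), so the face of an anchored coordinate, which spans $[-\ell/2,\ell/2]$ in the centered coordinates, can pass through the region where $f$ is unbounded. The paper's Lemma \ref{bolasgrandes} (and Lemma \ref{bddballs} for the local operator) sidesteps all of this by comparing the optimal cube at $x$ with its \emph{dilate} containing $y$, which requires only a lower bound on the optimal radius and an upper bound on $M_\infty f$ itself --- both available on $\{\min_i x_i\ge\varepsilon\}$ since $M_\infty f$ is block decreasing and finite there. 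With that substitution your argument closes; as written, it does not.
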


Denote the cone of non-negative block decreasing functions in $BV(\mathbb{R}^d)$, measured with the
$BV$ norm, by $BD (\mathbb{R}^d)$.
The preceding Theorems,  together with Theorem 2.19 and
Remark 2.20 of \cite{AlPe2}, entail the following boundedness
results on $BD (\mathbb{R}^d)$ for the local maximal operator.

\begin{corollary}\label{cor1}
 Given an unconditional norm $\mu$ on $\mathbb{R}^d$, the local maximal operator $M_{R,\mu}$ is bounded from
 $BD(\mathbb{R}^d)$ to $BV(\mathbb{R}^d)$. More precisely,
there exists a constant $c= c(\mu,d) > 0$ such that
 for all $R>0$ and all $f\in BD(\mathbb{R}^d)$ we have
\begin{equation}\label{BVnorm}
\|M_{R,\mu} f\|_{BV(\mathbb{R}^d)}\le
c\left(\|f\|_{BV(\mathbb{R}^d)} + \|f\|_{L^1 (\mathbb{R}^d)}\log^+ R \right).
\end{equation}
If $\mu=\|\cdot\|_\infty$, then $M_{R,\mu}$ is bounded from
 $BD(\mathbb{R}^d)$ to $W^{1,1}(\mathbb{R}^d)$, so
there exists a constant $c= c(\mu,d) > 0$ such that
 for all $R>0$ and all $f\in BD(\mathbb{R}^d)$
\begin{equation}\label{W11norm}
\|M_{R,\infty} f\|_{W^{1,1}(\mathbb{R}^d)}\le
c\left(\|f\|_{BV(\mathbb{R}^d)} + \|f\|_{L^1 (\mathbb{R}^d)} \log^+ R\right).
\end{equation}
\end{corollary}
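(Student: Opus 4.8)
The plan is to estimate the two pieces of the $BV$ norm separately, writing $\|M_{R,\mu}f\|_{BV(\mathbb{R}^d)} = \|M_{R,\mu}f\|_{L^1(\mathbb{R}^d)} + V(M_{R,\mu}f,\mathbb{R}^d)$, and to show that each is dominated by the right-hand side of (\ref{BVnorm}). The variation term is immediate: since $f\in BD(\mathbb{R}^d)\subset BV(\mathbb{R}^d)$ has finite variation and $\mu$ is unconditional, Theorem \ref{t1} gives $V(M_{R,\mu}f,\mathbb{R}^d)\le c(\mu,d)\,V(f,\mathbb{R}^d)\le c(\mu,d)\|f\|_{BV(\mathbb{R}^d)}$, with a constant independent of $R$. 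Thus the entire variation is absorbed into the $\|f\|_{BV}$ term of (\ref{BVnorm}), and \emph{no} factor $\log^+ R$ is incurred here. This is the crucial point that distinguishes the sharp form of (\ref{BVnorm}) from a cruder bound.

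For the $L^1$ term I would invoke Theorem 2.19 and Remark 2.20 of \cite{AlPe2}, which assert that the local maximal operator maps $BV(\mathbb{R}^d)$ into $L^1(\mathbb{R}^d)$ with operator norm of order $\log R$, the refined form being one in which the growing factor $\log^+ R$ multiplies only $\|f\|_{L^1}$; that is, $\|M_{R,\mu}f\|_{L^1}\le c(\mu,d)\bigl(\|f\|_{BV}+\|f\|_{L^1}\log^+ R\bigr)$. The one point requiring care is that the cited estimate is stated for a single reference norm $\nu$, whereas here $\mu$ is an arbitrary unconditional norm. This is handled by the equivalence of norms: if $a\,\nu\le\mu$, then $B_\mu(y,\delta)\subseteq B_\nu(y,\delta/a)$ with comparable measures, whence $M_{R,\mu}f\le C\,M_{R/a,\nu}f$ pointwise; applying the reference estimate with radius $R'=R/a$ and using $\log^+(R/a)\le\log^+ R+O(1)$ shows that the extra additive constant is absorbed into the $\|f\|_{L^1}$ coefficient, hence into the $\|f\|_{BV}$ term. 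In particular $M_{R,\mu}f\in L^1(\mathbb{R}^d)$, which together with the finite variation just established shows $M_{R,\mu}f\in BV(\mathbb{R}^d)$ and proves (\ref{BVnorm}).

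For the stronger conclusion (\ref{W11norm}) in the cubic case $\mu=\|\cdot\|_\infty$, I would replace Theorem \ref{t1} by Theorem \ref{cubes}: the latter guarantees that $M_{R,\infty}f$ possesses a weak gradient $\nabla M_{R,\infty}f\in L^1(\mathbb{R}^d)$ with $\int_{\mathbb{R}^d}|\nabla M_{R,\infty}f|\le c_d\,V(f,\mathbb{R}^d)\le c_d\|f\|_{BV}$, \emph{without} any hypothesis on the Cantor part, which is exactly what lets the argument cover all of $BD(\mathbb{R}^d)$ and not merely its $SBV$ subcone. The existence of an integrable weak gradient means precisely that $D M_{R,\infty}f$ is absolutely continuous, so its jump and Cantor parts vanish; combined with $M_{R,\infty}f\in L^1$ from the previous paragraph, this places $M_{R,\infty}f$ in $W^{1,1}(\mathbb{R}^d)$. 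Since on $W^{1,1}$ the Sobolev norm coincides with the $BV$ norm, one has $\|M_{R,\infty}f\|_{W^{1,1}}=\|M_{R,\infty}f\|_{L^1}+\int_{\mathbb{R}^d}|\nabla M_{R,\infty}f|$, and adding the two displayed bounds yields (\ref{W11norm}).

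The proof is therefore a matter of assembling estimates already proved, and the genuinely delicate steps are organizational rather than analytic: transferring the $L^1$ bound of \cite{AlPe2} from its reference norm to a general unconditional $\mu$ while preserving the fact that $\log^+ R$ multiplies only $\|f\|_{L^1}$, and correctly identifying ``$BV$ with absolutely continuous distributional derivative'' with $W^{1,1}$ in the cubic case. The substantive content resides in Theorems \ref{t1} and \ref{cubes} and in the $L^1$ mapping property of \cite{AlPe2}; once these are in hand, no further hard estimate is needed.
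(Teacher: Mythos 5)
Your proposal is correct and follows exactly the route the paper intends: the paper gives no separate proof of Corollary \ref{cor1} beyond noting that it follows from Theorem \ref{t1} (resp.\ Theorem \ref{cubes} for cubes) combined with the $L^1$ mapping bound of Theorem 2.19 and Remark 2.20 of \cite{AlPe2}, which is precisely your decomposition of the $BV$ norm into its $L^1$ and variation parts. Your additional care about transferring the cited $L^1$ estimate to a general unconditional norm $\mu$ is a sensible detail the paper leaves implicit.
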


Of course, if $R$ is fixed, (\ref{BVnorm}) reduces to
\begin{equation}\label{BVnorms}
\|M_{R,\mu} f\|_{BV(\mathbb{R}^d)}\le
c \|f\|_{BV(\mathbb{R}^d)},
\end{equation}
though perhaps with a different $c$, and likewise for (\ref{W11norm}), in the case
of cubes.

\section{The maximal function of a block decreasing function.}

In this  and the following sections, lemmas and proofs
will refer exclusively to the maximal operator $M$, since they are exactly the same for the
local operator $M_R$. The only exception occurs in Lemma \ref{bolasgrandes},
which is valid in the non-local case, under fewer hypotheses.
It has to do with the Lipschitz behavior of $Mf$ on some ``good sets",
for a locally integrable $f$.
Since as $R$ becomes small, $M_R f$ looks more like $f$, any improvement
of $M_R f$ over $f$ will tend to disappear as $R\to 0$.
Thus, the local case requires additional assumptions, and hence it is treated in a different lemma.

In this section we prove  that if $f$ is non-negative and block decreasing,
then $M_\mu f$ is block decreasing (see Lemma
\ref{lemma3} below; of course, $M_\mu f$ is always non-negative). First
we deal with the local integrability of $M_\mu f$.

\begin{lemma}
  \label{locint}
  Let $f: \mathbb{R}^d \to [0,\infty]$ be a block decreasing function
   such that
  $V(f,\mathbb{R}^d)<\infty$, and let $\mu$ be any norm in $\mathbb{R}^d$. Then $M_\mu f\in L^1_{loc} (\mathbb{R}^d)$.
\end{lemma}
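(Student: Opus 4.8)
The plan is to reduce everything to the maximal operator associated to cubes and then exploit the $L^p$-boundedness of $M$ for a single exponent $p>1$. First, since all norms on $\mathbb{R}^d$ are equivalent, the maximal functions $M_\mu f$ and $M_\infty f$ are pointwise comparable, so it suffices to treat $M_\infty f$, the operator defined by $\ell_\infty$-balls. Next I would dispose of the trivial case: by the discussion preceding the lemma, either the precise representative $f^*$ is bounded, in which case $M_\infty f\le\|f^*\|_\infty$ and there is nothing to prove, or $f^*(0)=\infty$ and $f^*$ is finite at every other point. In the latter case block decreasingness gives, for $\|x\|_\infty\ge a$, the bound $f(x)=f(|x_1|,\dots,|x_d|)\le f(a,0,\dots,0)=:C_a<\infty$, so $f$ is bounded outside every cube centred at the origin.

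Fix a cube $Q_0=[-a,a]^d$ and split $f=f_1+f_2$ with $f_1:=f\,\mathbf{1}_{Q_0}$ and $f_2:=f\,\mathbf{1}_{Q_0^c}$. Since $f_2$ is bounded by $C_a$, we have $M_\infty f_2\le C_a$, which is locally integrable; by subadditivity it remains to show $M_\infty f_1\in L^1_{loc}(\mathbb{R}^d)$. The crux is that the finite variation hypothesis upgrades the local integrability of $f$ to local membership in $L^{d/(d-1)}$. Indeed, $f_1$ has compact support and finite variation, because $|Df_1|(\mathbb{R}^d)\le |Df|(Q_0)+\int_{\partial Q_0} f^*\,d\mathcal{H}^{d-1}\le V(f,\mathbb{R}^d)+C_a\,\mathcal{H}^{d-1}(\partial Q_0)<\infty$, the boundary term being finite since $f$ is bounded near $\partial Q_0$. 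Applying the Gagliardo--Nirenberg--Sobolev inequality for functions of bounded variation (see [AFP]) to the truncations $\min(f_1,k)$, which lie in $BV(\mathbb{R}^d)\cap L^\infty$, yields $\|\min(f_1,k)\|_{L^{d/(d-1)}}\le c_d\,|Df_1|(\mathbb{R}^d)$; letting $k\to\infty$ by monotone convergence gives $f_1\in L^{d/(d-1)}(\mathbb{R}^d)$. This step also re-proves, via the compact support of $f_1$, that $f\in L^1_{loc}(\mathbb{R}^d)$.

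Finally, since $d\ge 2$ we have $p:=d/(d-1)>1$, so the Hardy--Littlewood maximal operator is bounded on $L^p(\mathbb{R}^d)$. Hence $M_\infty f_1\in L^{p}(\mathbb{R}^d)\subset L^1_{loc}(\mathbb{R}^d)$, and combining with $M_\infty f_2\in L^\infty$ we conclude $M_\infty f\in L^1_{loc}(\mathbb{R}^d)$; comparability of norms transfers this to $M_\mu f$ for an arbitrary norm $\mu$.

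The main obstacle to keep in mind is the origin, where $f$ may genuinely blow up and where $f$ need not be globally integrable. It is tempting to try a direct pointwise estimate bounding $M_\infty f$ at $x$ by the average of $f$ over the centred cube of half-side $\|x\|_\infty$; but this fails (a thin, tall block centred on a coordinate axis is a counterexample, since the maximal function detects the large local values on the axis while the centred average does not). This is precisely why the argument routes through the Sobolev improvement to $L^{d/(d-1)}_{loc}$ combined with the $L^p$-boundedness of $M$, rather than through monotonicity alone; the only delicate point is the truncation needed to apply the Sobolev inequality to the possibly non-integrable $f$.
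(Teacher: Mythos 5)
Your overall strategy --- reduce to cubes, upgrade $f$ to $L^{d/(d-1)}_{loc}$ via the Sobolev embedding for finite-variation functions, and then invoke the $L^p$-boundedness of the maximal operator for $p=d/(d-1)>1$ --- is exactly the engine of the paper's proof. But your localization step has a genuine gap. You claim that $f^*$ is finite at every point other than the origin, and hence that $f$ is bounded outside every cube $Q_0=[-a,a]^d$, so that $M_\infty f_2\le C_a<\infty$. Neither claim is true when $d\ge 3$. The discussion preceding the lemma only guarantees that $\{f^*=\infty\}$ is $\mathcal{H}^{d-1}$-negligible, and for $d\ge3$ such a set can contain whole segments of the coordinate axes. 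Concretely, take
\begin{equation*}
f(x)=\|(x_1,\dots,x_{d-1})\|_2^{-1/2}\,\mathbf{1}_{\{\|(x_1,\dots,x_{d-1})\|_2\le 1\}}\,e^{-|x_d|},\qquad d\ge 3.
\end{equation*}
This is block decreasing, locally integrable, and of finite variation (the gradient of $\|y\|_2^{-1/2}$ is integrable near $y=0$ in $\mathbb{R}^{d-1}$ precisely because $d-1\ge 2$), yet $f(\varepsilon,0,\dots,0,2a)\to\infty$ as $\varepsilon\to0^+$, so $f$ is unbounded outside \emph{every} centred cube and your constant $C_a$ is $+\infty$ for every $a$. (There is also the smaller point that block decreasingness only yields $f(x)\le\max_i f(ae_i)$ rather than $f(x)\le f(a,0,\dots,0)$, since $f$ need not be symmetric in its coordinates; but the real problem is that $f(ae_i)$ itself can be infinite.) Your argument does survive for $d=2$, where an $\mathcal{H}^1$-null lower set cannot meet the axes outside the origin, but the lemma is needed for all $d\ge2$.

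The paper sidesteps this by localizing the \emph{operator} rather than the function: for $x$ in a centred cube $B=B_\infty(0,r)$, the block decreasing property lets one restrict the supremum defining $M_\infty f(x)$ to cubes contained in $B$ (any competing cube can be translated towards the origin, which only increases the average by Lemma \ref{bdl2}, and cubes of half-side $\ge r$ can be replaced by $B$ itself, since shrinking a centred cube increases the average). Then only $f|_B$ matters, $f\in BV(B)\subset L^{d/(d-1)}(B)$ by the embedding, and $L^{d/(d-1)}$-boundedness of $M$ finishes the proof. If you replace your cut-off decomposition by this observation --- equivalently, prove $M_\infty f\le M_\infty(f\mathbf{1}_{B})$ on $B$ --- the rest of your write-up (truncation, Gagliardo--Nirenberg--Sobolev, monotone convergence) goes through, and the tail $f_2$ never needs to be bounded.
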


\begin{proof} By pointwise comparability of maximal functions
associated to different norms, it is enough to prove the
result in the $\ell_\infty$ case. Fix a ball $B = B_\infty (0, r)$ (centered at the origin) and note that for any $x\in B$, to estimate
$M_\infty f (x)$ it suffices to average over cubes contained in $B$,
by the block decreasing property of $f$. Now by Sobolev embedding
for functions in $BV (B_\infty (0, r))$
(cf. for instance, \cite[Corollary 3.49, p. 152]{AFP}), we have
$f\in L^{\frac{d}{d - 1}}(B_\infty (0, r))$. Using the boundedness
of the maximal operator when $p > 1$, $M_\infty f\in L^{\frac{d}{d - 1}}(B_\infty (0, r))\subset L^{1}(B_\infty (0, r))$. Since $r$ is arbitrary, it follows that $M_\infty f\in L^1_{loc} (\mathbb{R}^d)$.
\end{proof}

On $\mathbb{R}$, being unconditional is the same as
being even, and being block decreasing, the same as being even and unimodal (decreasing on $(0,\infty)$).
So the next Lemma is trivial, and we omit the proof. We do mention that
a (nontrivial) higher dimensional version is known in the literature as
 Anderson's theorem, cf.
 \cite{An} or \cite[Theorem 11.1]{Ga}.

\begin{lemma}\label{bdl1}
  Let $f: \mathbb{R} \to [0,\infty]$ be  block decreasing.
Then for every $\delta >0$, the function
 $g_\delta(x):=\int_{x-\delta}^{x+\delta}f(u)du$ is block decreasing.
\end{lemma}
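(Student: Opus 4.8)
The plan is to reduce the statement to the two defining features of a block decreasing function on the line: being even and being non-increasing on $[0,\infty)$. So I must show that $g_\delta$ is even and that $g_\delta$ is non-increasing on $[0,\infty)$. Evenness is immediate: writing $g_\delta(-x)=\int_{-x-\delta}^{-x+\delta}f(u)\,du$ and substituting $u=-v$ turns this into $\int_{x-\delta}^{x+\delta}f(-v)\,dv=\int_{x-\delta}^{x+\delta}f(v)\,dv=g_\delta(x)$, using $f(-v)=f(v)$. If $g_\delta(x)=+\infty$ for some $x\ge 0$, the desired monotonicity inequality at that point is automatic, so I may assume the relevant integrals are finite. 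It only remains to prove that $g_\delta$ is non-increasing on $[0,\infty)$, and for this I would split $[0,\infty)$ according to whether the averaging window $[x-\delta,x+\delta]$ straddles the origin.

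In the \emph{non-straddling} regime $x\ge\delta$, I would rewrite $g_\delta(x)=\int_{-\delta}^{\delta}f(x+t)\,dt$ via $u=x+t$. For $t\in[-\delta,\delta]$ every argument satisfies $x+t\ge x-\delta\ge 0$, so for $\delta\le x_1<x_2$ the monotonicity of $f$ on $[0,\infty)$ gives $f(x_1+t)\ge f(x_2+t)$ pointwise; integrating in $t$ yields $g_\delta(x_1)\ge g_\delta(x_2)$. In the \emph{straddling} regime $0\le x\le\delta$ the pointwise comparison fails (this is the one delicate point, see below), so instead I would cut the window at the origin and exploit evenness. Setting $H(s):=\int_0^s f\ge 0$ for $s\ge 0$, the substitution $u=-v$ on the left half gives
\[
g_\delta(x)=\int_{x-\delta}^{0}f+\int_{0}^{x+\delta}f=H(\delta-x)+H(\delta+x).
\]
For $0\le x_1<x_2\le\delta$ this representation yields
\[
g_\delta(x_1)-g_\delta(x_2)=\bigl(H(\delta-x_1)-H(\delta-x_2)\bigr)-\bigl(H(\delta+x_2)-H(\delta+x_1)\bigr)=\int_{\delta-x_2}^{\delta-x_1}f-\int_{\delta+x_1}^{\delta+x_2}f.
\]
These are integrals of $f$ over two intervals of equal length $x_2-x_1$, both contained in $[0,\infty)$, with the first lying to the left of the second; since $f$ is non-increasing on $[0,\infty)$, a term-by-term comparison (after translating both to $[0,x_2-x_1]$) shows the first integral dominates the second, so the difference is $\ge 0$. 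Thus $g_\delta$ is non-increasing on $[0,\delta]$.

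To conclude, I would glue the two regimes: for arbitrary $0\le x_1<x_2$ with $x_1<\delta<x_2$ one has $g_\delta(x_1)\ge g_\delta(\delta)\ge g_\delta(x_2)$, the outer inequalities coming from the straddling and non-straddling cases respectively (the point $\delta$ belongs to both pieces, so no continuity argument is needed). Together with evenness this gives that $g_\delta$ is block decreasing. The step I expect to be the genuine obstacle is the straddling regime: one cannot simply claim $f(x_1+t)\ge f(x_2+t)$ for all $t\in[-\delta,\delta]$, because for $t$ near the left edge the quantity $|x-\delta+\cdots|$ can move \emph{toward} the origin as $x$ grows, reversing the inequality. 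The symmetric split combined with the concavity of $H$ (equivalently, the monotonicity of $f$) is precisely what repairs this, and is the heart of the argument.
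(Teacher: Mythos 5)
Your proof is correct. The paper itself gives no argument here --- it declares the lemma trivial and omits the proof (deferring the nontrivial higher-dimensional analogue to Anderson's theorem) --- so there is nothing to compare against; your two-regime argument (pointwise comparison when the window does not straddle the origin, the symmetric split $g_\delta(x)=H(\delta-x)+H(\delta+x)$ with a comparison of two equal-length intervals when it does, glued at $x=\delta$) is a complete and correct way to supply the omitted details, and you correctly identify the straddling case as the only point where care is needed. One small remark: since $f$ may take the value $+\infty$, the reduction ``assume the relevant integrals are finite'' is not quite legitimate as stated (if $g_\delta(x_2)=\infty$ you still owe $g_\delta(x_1)=\infty$), but this is harmless because every step of your argument is a comparison of integrals of nonnegative functions, which is valid in $[0,\infty]$; in the straddling case one should just phrase the conclusion as $H(\delta-x_1)+H(\delta+x_1)\ge H(\delta-x_2)+H(\delta+x_2)$ by adding the common terms to the interval inequality rather than subtracting, to avoid $\infty-\infty$.
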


It follows from the definition of $V(f,\Omega)$ in terms of the distributional derivative
$Df$,  that functions equal a.e. have the same variation.
Since the measure $Df$ may have a singular part, it is
nevertheless
 useful to choose an everywhere defined representative of $f$.

\begin{definition}\label{precrep} Let $B_\mu$ denote a generic
 ball defined using the norm $\mu$. The {\em precise
representative} $f^*$ of $f$ is
\begin{equation*}
    f^*(x): =\limsup_{|B_\mu|\downarrow
    0, x\in B_\mu}\frac{1}{|B_\mu|}\int_{B_\mu}f(y)dy.
\end{equation*}
\end{definition}

The notation does not reflect the fact that $f^*$ depends on $\mu$,
since  this will make no difference in the arguments below.

A related notion of precise representative can be obtained
by taking the limsup over balls centered at $x$, instead of
balls containing $x$, as we do above.
With either choice of definition, it is not difficult to see that if $f$ is block decreasing, then so is $f^*$ (cf. Lemma \ref{precrep} below).

From now on, we  use the following notation. Let
$x=(x_1,\ldots,x_d)\in \mathbb{R}^d$,  let $i\in\{1,\ldots,d\}$,
and let $\{e_1,\ldots,e_d\}$ be the canonical basis of $\mathbb{R}^d$. We
denote by $\hat{x}_i$ the $(d-1)$-dimensional vector obtained from $x$
by removing its $i$-th component. That is,
$\hat{x}_{i}=(x_1,\ldots,x_{i-1},x_{i+1},\ldots,x_d)$. To shorten
expressions, we write $x=(\hat{x}_i,x_i)$. Even though the notation may suggest
otherwise, we do emphasize the fact that the
order of the coordinates in $x=(\hat{x}_i,x_i)$ is unaltered. We also write
$f(x) = f(\hat{x}_i,x_i)$ rather than $f(x) = f((\hat{x}_i,x_i))$.

\begin{lemma}\label{bdl2}
  Let $f:\mathbb{R}^d\rightarrow [0, \infty]$ be  block decreasing,
let $\mu$ be an unconditional norm in $\mathbb{R}^d$, and let $\delta>0$. Then
   $g(x):=\int_{B_\mu(x,\delta)}f(u)du$ is block
  decreasing.
\end{lemma}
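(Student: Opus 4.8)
The plan is to combine Tonelli's theorem (legitimate since $f\ge 0$) with the one-dimensional Lemma \ref{bdl1}, slicing the ball $B_\mu(x,\delta)$ along each coordinate axis. Writing $K:=B_\mu(0,\delta)$, the substitution $u=x+v$ gives $g(x)=\int_K f(x+v)\,dv$. The whole argument rests on one structural property of $K$: because $\mu$ is unconditional and $K$ is convex, for every fixed $\hat{v}_i$ the slice $\{v_i:(\hat{v}_i,v_i)\in K\}$ is a closed interval symmetric about the origin, i.e. of the form $[-a,a]$ with $a=a(\hat{v}_i)\ge 0$ (convexity makes the slice an interval, unconditionality makes it symmetric about $0$).

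Next I would prove monotonicity one coordinate at a time; fix the index $i=d$ for concreteness. Integrating the inner variable $v_d$ first and substituting $w=x_d+v_d$, I obtain for every fixed $\hat{x}_d$ and $x_d$
\[
g(\hat{x}_d,x_d)=\int_{\mathbb{R}^{d-1}}\left(\int_{x_d-a(\hat{v}_d)}^{x_d+a(\hat{v}_d)} f(\hat{x}_d+\hat{v}_d,w)\,dw\right)d\hat{v}_d .
\]
For each fixed $\hat{v}_d$ the one-dimensional function $w\mapsto f(\hat{x}_d+\hat{v}_d,w)$ is even and nonincreasing in $|w|$ (this is exactly the block decreasing property of $f$ in the last slot, using also that $f$ is unconditional), so Lemma \ref{bdl1}, applied with $\delta=a(\hat{v}_d)$, shows the inner integral is a block decreasing function of $x_d$; in particular it is nonincreasing on $[0,\infty)$. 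Since an integral of nonincreasing functions is nonincreasing, $x_d\mapsto g(\hat{x}_d,x_d)$ is nonincreasing on $[0,\infty)$ for every fixed $\hat{x}_d$. Relabelling coordinates gives the same conclusion for each of the $d$ variables.

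It remains to check that $g$ is unconditional, which I would do directly: if $\sigma$ denotes the reflection flipping the sign of the $i$-th coordinate, then $\sigma K=K$ (unconditionality of $\mu$) and $f\circ\sigma=f$, so the measure-preserving change of variables $v\mapsto\sigma v$ yields $g(\sigma x)=\int_K f(\sigma x+v)\,dv=\int_K f(\sigma(x+v))\,dv=g(x)$. Combining unconditionality with monotonicity in each variable on the positive cone shows that $g$ is block decreasing.

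The step I expect to be the crux is the reduction itself: recognizing that, although $B_\mu$ need not be a box, its one-dimensional axis slices are symmetric intervals, which is precisely what allows Lemma \ref{bdl1} to be applied fiberwise. An alternative and essentially equivalent route avoids Tonelli and instead reflects $B_\mu(x,\delta)$ across the hyperplane bisecting two translates; there the main point is to verify that the resulting lune lies on the correct side of the hyperplane, which again reduces to the monotonicity of $\mu$ in each coordinate (an unconditional norm satisfies $\mu(z)\le\mu(y)$ whenever $|z_j|\le|y_j|$ for all $j$). Throughout, the hypothesis $f\ge 0$ lets me invoke Tonelli freely and keeps every manipulation valid even when $g$ takes the value $+\infty$.
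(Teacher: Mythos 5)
Your proof is correct and follows essentially the same route as the paper's: both slice $B_\mu(0,\delta)$ into one-dimensional axis-parallel sections via Fubini--Tonelli, observe that unconditionality of $\mu$ makes each section an interval centered at $0$, and then apply Lemma \ref{bdl1} fiberwise. Your explicit verification that $g$ is unconditional (via the reflection $v\mapsto\sigma v$) is a small additional check the paper leaves implicit, but it does not change the argument.
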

\begin{proof} Since the argument is the same for each coordinate, we
focus on  the last one. It is enough to prove that if $x=(\hat{x}_d,x_d)\in [0, \infty)^d$ and   $h\ge0$, then
  $g(\hat{x}_{d},x_d+h)\le g(x)$. But this
follows from Fubini's Theorem: Write
\begin{equation*}
  P:=\{\hat{u}_d\in\mathbb{R}^{d-1}: \mbox{ there exists a real number } u_d \mbox{ such that }(\hat{u_d}, u_d)\in B_\mu(0,\delta)\},
\end{equation*}
and
\begin{equation*}
  S_{\hat{u}_d}=\{t \in \mathbb{R}: u= (\hat{u_d}, t)\in B_\mu(0,\delta)\},
  \end{equation*}
that is, $ S_{\hat{u}_d}$ is the vertical section
in $B_\mu(0,\delta)$ associated to $\hat{u}_d$.
The assumption that $\mu$ is  unconditional entails that each
$S_{\hat{u}_d}$ is an interval centered at 0.
 Now
\begin{equation*}
  g(x)=\int_{B_\mu(0,\delta)}f(u+x)du= \int_P
  \left(\int_{S_{\hat{u}_d}}f(u+x)
  du_d\right)d\hat{u}_d
\end{equation*}
and
\begin{equation*}
  g(x+he_d)=\int_{B_\mu(0,\delta)}f(u+x+he_d)du= \int_P
  \left(\int_{S_{\hat{u}_d}}f(u+x+he_d)
  du_d\right)d\hat{u}_d.
\end{equation*}
For fixed $\hat{u}_d$ and $\hat{x}_d$ the function
$f(\hat{u}_d+\hat{x}_d,\cdot)$ is block decreasing, so by Lemma
\ref{bdl1},
\begin{equation*}
  \int_{S_{\hat{u}_d}}f(\hat{u}_d +\hat{x}_d,u_d+x_d+h)
  du_d \le  \int_{S_{\hat{u}_d}}f(\hat{u}_d +\hat{x}_d,u_d+x_d)
  du_d,
\end{equation*}
and now $g(\hat{x}_{d},x_d+h)\le g(x)$ follows by putting together the last three equations.
\end{proof}

\begin{lemma}\label{precrep} Let $\mu$ be an unconditional norm in $\mathbb{R}^d$.
  If $f:\mathbb{R}^d\rightarrow [0, \infty]$ is  block decreasing,
  then so is its precise representative $f^*$.
  \end{lemma}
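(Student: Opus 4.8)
The plan is to verify the two defining properties of a block decreasing function for $f^*$: that $f^*$ is unconditional, and that it is decreasing in each variable on $[0,\infty)^d$. Both will follow from a single observation — that each function appearing inside the $\limsup$ of Definition \ref{precrep} is itself block decreasing — together with the fact that block decreasingness is preserved under suprema and infima (unconditionality and coordinatewise monotonicity both pass to $\sup$ and $\inf$). The centered variant of the precise representative mentioned in the remark above is immediate: writing $g_\delta(x) := \int_{B_\mu(x,\delta)} f$, Lemma \ref{bdl2} says $g_\delta$ is block decreasing, so $x \mapsto g_\delta(x)/|B_\mu(0,\delta)|$ is block decreasing and the centered representative is a $\limsup$ of such functions. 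The substance of the lemma is therefore the version in Definition \ref{precrep}, which averages over all balls \emph{containing} $x$, not merely those centered at $x$.

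First I would rewrite $f^*$ in terms of $g_\delta$. Since $x \in B_\mu(y,\delta)$ iff $y \in B_\mu(x,\delta)$, and since $|B_\mu(y,\delta)| = |B_\mu(0,\delta)|$ depends only on $\delta$, one has
\begin{equation*}
f^*(x) = \inf_{\eta > 0}\, \sup_{0 < \delta \le \eta}\ \sup_{y \in B_\mu(x,\delta)} \frac{g_\delta(y)}{|B_\mu(0,\delta)|}.
\end{equation*}
Because suprema and infima of a family of block decreasing functions are again block decreasing, it suffices to prove that for each fixed $\delta > 0$ the function $x \mapsto \Psi_\delta(x) := \sup_{y \in B_\mu(x,\delta)} g_\delta(y)$ is block decreasing.

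The main obstacle is the monotonicity of $\Psi_\delta$, precisely because the balls $B_\mu(x,\delta)$ and $B_\mu(x',\delta)$ over which the suprema are taken are different when $x\neq x'$. I would reduce this to a pointwise comparison: fixing the $d$-th coordinate and taking $x, x' \in [0,\infty)^d$ with $x' = x + h e_d$, $h \ge 0$, I claim that for every $y \in B_\mu(x',\delta)$ there is a point $y'' \in B_\mu(x,\delta)$ with $|y''_i| \le |y_i|$ for all $i$; granting this, block decreasingness of $g_\delta$ gives $g_\delta(y'') \ge g_\delta(y)$, whence $\Psi_\delta(x') \le \Psi_\delta(x)$. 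To construct $y''$, keep all coordinates but the $d$-th unchanged, so that $y'' - x = (a, y''_d - x_d)$ with $a := \hat{y}_d - \hat{x}_d$, and exploit that $\mu$ is unconditional: the function $\rho(s) := \mu(a, s)$ is even and convex, hence $\{s : \rho(s)\le \delta\}$ is a symmetric interval $[-r,r]$, and since $\mu(y''-x)=\rho(y''_d - x_d)$ the requirement $y'' \in B_\mu(x,\delta)$ amounts to $y''_d \in [x_d - r, x_d + r]$. I would then take $y''_d = \max(0, x_d - r)$, the point of this interval nearest the origin, and check $|y''_d| \le |y_d|$: if $x_d \le r$ this reads $0 \le |y_d|$, while if $x_d > r$ the membership $y \in B_\mu(x',\delta)$ forces $y_d \ge x_d + h - r \ge x_d - r > 0$, so $|y_d| = y_d \ge x_d - r = |y''_d|$.

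Finally, the unconditionality of $\Psi_\delta$ (hence of $f^*$) is built into the same framework: since $\mu$ is unconditional, the reflection $\sigma_i$ in the $i$-th coordinate maps $B_\mu(x,\delta)$ onto $B_\mu(\sigma_i x,\delta)$, and $g_\delta$ is unconditional by Lemma \ref{bdl2}, so $\Psi_\delta(\sigma_i x) = \Psi_\delta(x)$. Collecting the pieces, $\Psi_\delta$ is block decreasing for each $\delta$, and passing to the $\sup$ over $\delta$ and then the $\inf$ over $\eta$ shows $f^*$ is block decreasing. I expect the projection step — verifying that the nearest feasible point has absolute value dominated by $|y_d|$ — to be the only place requiring genuine care, exactly because the averaging balls are permitted to be off-center.
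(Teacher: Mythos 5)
Your proof is correct and rests on the same mechanism as the paper's: reposition a ball containing $x+he_d$ into a ball of the same radius containing $x$ whose center is coordinatewise closer to the origin, then invoke Lemma \ref{bdl2}. The paper does this by translating the ball by $-he_d$ and restricting to radii $\delta<\mu((h/2)e_d)$ (which suffices for the limsup as $\delta\downarrow 0$), whereas your projection of the center's $d$-th coordinate onto the nearest admissible value works for every radius and yields the slightly stronger intermediate fact that $\Psi_\delta$ is block decreasing for each fixed $\delta$; both arguments are valid.
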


\begin{proof}  It is enough to prove that if $x=(\hat{x}_d,x_d)\in [0, \infty)^d$ and   $h > 0$, then
  $f^*(\hat{x}_{d},x_d+h)\le f^* (x)$. But this
follows from the previous lemma, applied to any ball $B_\mu (a, \delta)$
containing $(\hat{x}_{d},x_d+h)$ and with radius $\delta < \mu ((h/2) e_d)$.
\end{proof}

\begin{lemma}\label{lemma3}
  Let $f:\mathbb{R}^d\rightarrow [0,\infty]$ be  block decreasing
and let the norm $\mu$ be  unconditional. Then,
   $M_\mu f$ is a block
  decreasing function.
\end{lemma}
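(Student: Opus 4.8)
The plan is to verify separately the two properties that define a block decreasing function: that $M_\mu f$ is unconditional, and that its restriction to $[0,\infty)^d$ decreases in each variable. I would begin by reparametrizing the supremum in (\ref{maxdef}). Since $x\in B_\mu(y,\delta)$ is equivalent to $y\in B_\mu(x,\delta)$, and $|B_\mu(y,\delta)|=|B_\mu(0,\delta)|$ does not depend on the center, one may write
\begin{equation*}
M_\mu f(x)=\sup_{\delta>0}\ \sup_{y\in B_\mu(x,\delta)}A_\delta(y),\qquad
A_\delta(y):=\frac{1}{|B_\mu(0,\delta)|}\int_{B_\mu(y,\delta)}f(u)\,du .
\end{equation*}
By Lemma \ref{bdl2}, each $A_\delta$ is block decreasing. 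Thus $M_\mu f$ is a supremum of the quantities $A_\delta(y)$ taken over centers $y$ ranging in the ball $B_\mu(x,\delta)$, and the task is to understand how this ``sup over a $\mu$-ball centered at $x$'' interacts with the block decreasing dependence of $A_\delta$ on its center.

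Unconditionality is immediate: if $\sigma$ reverses the sign of one coordinate, then $\mu(\sigma v)=\mu(v)$ and $A_\delta(\sigma y)=A_\delta(y)$, while $B_\mu(\sigma x,\delta)=\sigma B_\mu(x,\delta)$; replacing $y$ by $\sigma y$ in the inner supremum yields $M_\mu f(\sigma x)=M_\mu f(x)$. The substance is the monotonicity on the cone. Fixing the last coordinate (the others being symmetric), I take $x\in[0,\infty)^d$, $h\ge0$ and $x'=x+he_d$, and aim at $M_\mu f(x')\le M_\mu f(x)$. For this it is enough to show that for every $\delta>0$ and every center $y$ with $x'\in B_\mu(y,\delta)$ there is a competing center $z$ with $x\in B_\mu(z,\delta)$ and $A_\delta(z)\ge A_\delta(y)$; taking suprema then gives the claim. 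I would let $z$ keep the coordinates $\hat{y}_d$ of $y$ and set $z_d:=\min\{x_d,|y_d|\}$, that is, slide the $d$-th coordinate of the center toward the origin, stopping no later than $x_d$.

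Two facts make this choice work, and verifying them is the heart of the proof. Since $|z_d|\le|y_d|$ and $z$ agrees with $y$ in all other coordinates, the block decreasing property of $A_\delta$ gives $A_\delta(z)\ge A_\delta(y)$. And to get $z\in B_\mu(x,\delta)$ I need $\mu(x-z)\le\mu(x'-y)$; the two difference vectors agree in every coordinate except the $d$-th, so by the monotonicity of unconditional norms (if $|a_j|\le|b_j|$ for all $j$ then $\mu(a)\le\mu(b)$, a standard consequence of the triangle inequality together with the sign symmetry) it reduces to the single scalar inequality $|x_d-z_d|\le|x'_d-y_d|$, which a brief case analysis on the sign of $y_d$ and on whether $x_d$ exceeds $|y_d|$ confirms, using $x_d\ge0$ and $h\ge0$. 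I expect the main obstacle to be precisely the mismatch between ``balls containing $x$'' in the definition of $M_\mu$ and the block decreasing dependence of the averages on the ball's center; the recentering resolves it, the delicate point being that sliding $y_d$ toward $0$ must at once keep $x$ inside the (generally non-box) ball and not lower the average. Since the monotonicity of unconditional norms is invoked twice, I would record it as a preliminary observation.
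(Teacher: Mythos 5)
Your proof is correct and takes essentially the same route as the paper: both reduce the claim to monotonicity in a single coordinate on the positive cone, and handle an arbitrary ball containing $x+he_d$ by sliding the $d$-th coordinate of its center toward the origin and invoking Lemma \ref{bdl2} to see that the average does not decrease while $x$ enters the ball. The paper splits into the cases $a_d\le x_d$ (where $x$ already lies in the original ball by convexity) and $a_d>x_d$ (where the center is lowered to $x_d$); your single recentering $z_d=\min\{x_d,|y_d|\}$, together with the monotonicity of unconditional norms, subsumes both cases.
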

\begin{proof}
  It is enough to prove that if $x=(\hat{x}_d,x_d)\in [0, \infty)^d$ and   $h\ge0$, then
  $M_\mu f(\hat{x}_d,x_d+h) \le M_\mu f(x)$, and to conclude this it
  suffices to show that given an arbitrary ball $B_\mu(a,\delta)$
  containing $x+he_d$, we have
  \begin{equation}\label{bdl3e1}
    \frac{1}{|B_\mu(a,\delta)|}\int_{B_\mu(a,\delta)}f(u)du
    \le M_\mu f(x).
  \end{equation}
Let us see why.  Since $x+he_d\in B_\mu(a,\delta)$,
by unconditionallity
$(\hat{x}_d, 0)\in B_\mu((\hat{a}_d, 0),\delta)$.
Now if $a_d \le x_d$, then $x_d\in [a_d, x_d+h)$, so
$x\in B_\mu(a,\delta)$ by convexity of the ball. Thus,  (\ref{bdl3e1}) holds in this case. And if $a_d > x_d$, then
$x\in B_\mu((\hat{a}_d,x_d),\delta)$, and
(\ref{bdl3e1}) follows from Lemma \ref{bdl2}, since we just lowered the
ball in the vertical direction.
\end{proof}

\section{Controlling the variation of block decreasing functions.}

The purpose of this section is to find a quantity equivalent to
the variation (cf. Definition \ref{defvar}), and easier to compute for block decreasing functions.
Denote by $f_{\hat{x}_i}$ the one-dimensional function
$f_{\hat{x}_i}(x_i):=f(x)$. We use the fact that finite variation can be characterized via the variation along the coordinate axes (cf. \cite[p. 196]{AFP}, or \cite[\S5.10]{EvGa}). Suppose $f$ is $C^1$. Integrating
pointwise the $\ell_1$-norm and the $\ell_2$-norm of its gradient, and using
$\|\cdot\|_2\le \|\cdot\|_1\le \sqrt{d}\|\cdot\|_2$, we obtain
\begin{equation}\label{derdirec}
  \int_{\mathbb{R}^d}|\nabla
f(u)|du\le \sum_{i=1}^d \int_{\mathbb{R}^d}|D_if(u)|du \le
\sqrt{d}\,\int_{\mathbb{R}^d}|\nabla f(u)|du,
\end{equation}
where $D_i f$ denotes the partial derivative of $f$ with respect to
$x_i$, i.e., the derivative of $f_{\hat{x}_i}$.
Since for a continuously differentiable $f$ we have
$V(f,\mathbb{R}^d)=\int_{\mathbb{R}^d}|\nabla
f(u)|du$, and for each fixed $\hat{x}_i$
we have $V(f_{\hat{x}_i},\mathbb{R}^d)=
 \int_{\mathbb{R}}|D_i f_{\hat{x}_i} (t)| dt$, (\ref{derdirec}) and
 an approximation
argument show that
\begin{equation}\label{vardirec}
  V(f,\mathbb{R}^d)\le \sum_{i=1}^d
  \int_{\mathbb{R}^{d-1}}V(f_{\hat{x}_i},\mathbb{R})d\hat{x}_i \le
\sqrt{d}\,
  V(f,\mathbb{R}^d),
\end{equation}
and this formula also holds when $V(f,\mathbb{R}^d) =\infty$.
From (\ref{vardirec}), it follows that for a block decreasing function
\begin{equation}\label{equivar}
 V(f,\mathbb{R}^d) \le  2 \sum_{i=1}^d
  \int_{\mathbb{R}^{d-1}}
  [f_{\hat{x}_i}(0^+)-f_{\hat{x}_i}(\infty)]d\hat{x}_i
  \end{equation}
\begin{equation}\label{equivar1}
 = 2^d \sum_{i=1}^d
  \int_{[0,\infty)^{d-1}}
  [f(\hat{x}_i,0^+)-f(\hat{x}_i,\infty)]d\hat{x}_i
  \le \sqrt{d} V(f,\mathbb{R}^d),
\end{equation}
where $f(\hat{x}_i,0^+):=\lim_{t\rightarrow 0^+}f(\hat{x}_i,t)$ and
$f(\hat{x}_i,\infty):=\lim_{t\rightarrow\infty}f(\hat{x}_i,t)$.

Next we show that the value at
infinity of a block decreasing function of finite variation is the
same in essentially all directions. Exceptions may occur, though, if
at least one coordinate remains fixed at $0$.

\begin{lemma}
  \label{varlim1}
  Let $f: \mathbb{R}^d \to [0,\infty]$ be a block decreasing function
   such that
  $V(f,\mathbb{R}^d)<\infty$. Then $\inf_{\mathbb{R}^d} f =
\lim_{t\rightarrow \infty}
  f(t,\ldots,t)$. Furthermore,
for every $x = (x_1,\ldots,x_{d}) \in (0,\infty)^d$, and
all $i\in \{1\dots, d\}$,
  \begin{equation}\label{limites}
    \inf_{\mathbb{R}^d} f =\lim_{t\rightarrow \infty}
f(\hat{x}_i,t).
\end{equation}
Additionally,
\begin{equation}\label{limmax}
    \inf_{\mathbb{R}^d} f = \lim_{t\rightarrow \infty}
M_\mu f(\hat{x}_i,t).
\end{equation}
\end{lemma}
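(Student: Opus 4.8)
The plan is to handle the three assertions in order: the first is essentially free, the second is the heart of the matter, and the third follows from the second together with a size estimate on the averages defining $M_\mu f$. \emph{For Part 1}, since $f$ is unconditional $\inf_{\mathbb R^d}f=\inf_{[0,\infty)^d}f$, and since $f$ is block decreasing the map $t\mapsto f(t,\dots,t)$ is non-increasing on $[0,\infty)$, so its limit exists and equals $\inf_t f(t,\dots,t)$. This limit is $\ge\inf_{[0,\infty)^d}f$ trivially, while for any $x\in[0,\infty)^d$ and $t\ge\max_k x_k$ monotonicity gives $f(t,\dots,t)\le f(x)$, so it is also $\le\inf f$; hence equality. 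Write $m:=\inf_{\mathbb R^d}f$. Note $\phi_i(\hat x_i):=\lim_{t\to\infty}f(\hat x_i,t)$ exists by monotonicity with $\inf_{\hat x_i}\phi_i=m$; the real content of (\ref{limites}) is the stronger claim that $\phi_i(\hat x_i)=m$ already at \emph{every} interior point $\hat x_i\in(0,\infty)^{d-1}$.

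\emph{For Part 2} I would argue by \emph{downward} induction on the number of ``free'' directions. For $J\subseteq\{1,\dots,d\}$ and $x$ with $x_k>0$ for $k\notin J$, set $L_J(x):=\lim_{x_j\to\infty,\,j\in J}f(x)$, a monotone limit depending only on the coordinates $x_k$, $k\notin J$, and let $Q(J)$ be the statement that $L_J(x)=m$ for all such $x$. The base case $|J|=d$ is exactly Part 1, and $Q(\{i\})$ is (\ref{limites}). For the inductive step, suppose $L_J(x^0)=c>m$ at some admissible $x^0$ and pick $k_0\notin J$; monotonicity then forces $f\ge c$ on the infinite slab $S=\{x:\ x_j\ge0\ (j\in J),\ 0\le x_k\le x^0_k\ (k\notin J)\}$. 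Choose $\lambda\in(m,c)$ for which $E:=\{f^*>\lambda\}$ (with $f^*$ block decreasing by Lemma \ref{precrep}) has finite perimeter; this is possible since $\int_{\mathbb R}\mathrm{Per}(\{f>\lambda\})\,d\lambda=V(f,\mathbb R^d)<\infty$. Slicing $\mathbf{1}_E$ in the direction $k_0$ and using (\ref{vardirec}) for $\mathbf{1}_E$, $\mathrm{Per}(E)$ dominates a constant times the measure of the set of lines $\hat z_{k_0}$ along which the slice of $E$ is a \emph{bounded, nonempty} symmetric interval. On the region $z_k\in(0,x^0_k]$ for $k\notin J\cup\{k_0\}$ and $z_j\ge0$ for $j\in J$, the slab makes the slice nonempty (it contains $0$); and letting the free coordinates $z_j\to\infty$ and invoking the inductive hypothesis $Q(J\cup\{k_0\})$ gives $\lim_{z_{k_0}\to\infty}f=L_{J\cup\{k_0\}}=m<\lambda$, so the slice is bounded once the $z_j$ are large. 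This is an infinite-measure family of qualifying lines, forcing $\mathrm{Per}(E)=\infty$, a contradiction. Hence $Q(J)$ holds, and in particular (\ref{limites}).

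\emph{For Part 3}, since $m$ is constant and $f\ge0$, averaging yields $M_\mu f=m+M_\mu\bar f$ with $\bar f:=f-m\ge0$ block decreasing and $\inf\bar f=0$; so it suffices to prove $\lim_{t\to\infty}M_\mu\bar f(\hat x_i,t)=0$. Fix $\epsilon>0$, put $A_\epsilon:=\{\bar f>\epsilon\}$, and observe that since $\{\bar f\le\epsilon\}$ has infinite measure the isoperimetric inequality (applied to the finite-perimeter level sets) gives $|A_\epsilon|<\infty$, whence the $BV$ Sobolev inequality and Hölder (as in Lemma \ref{locint}) give $I_\epsilon:=\int_{A_\epsilon}\bar f<\infty$. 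Write $p_t=(\hat x_i,t)$, $\eta:=\min_k(\hat x_i)_k>0$, and fix $\rho$ with $\int_{A_\epsilon\setminus B_\mu(0,\rho)}\bar f<\epsilon$. For a ball $B=B_\mu(y,\delta)\ni p_t$ I split into cases: if $B$ meets the core $B_\mu(0,\rho)$, then containing both $p_t$ and a point of that core forces $\delta\gtrsim\mu(p_t)\gtrsim t$, so $|B|\gtrsim t^d$ and $\tfrac1{|B|}\int_B\bar f\le\epsilon+I_\epsilon/|B|\to\epsilon$; if $B$ misses the core, then by Part 2 (at transverse coordinates $\ge\eta/2$ with the $i$-th coordinate $\to\infty$) one has $\bar f<\epsilon$ on $B$ once $\delta$ is small, while for $\delta$ bounded below the average is at most $\epsilon+\epsilon/|B|$. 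Letting $t\to\infty$ and then $\epsilon\to0$ gives the claim, hence (\ref{limmax}).

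\emph{The main obstacle} is Part 2 when $d\ge3$: pushing one coordinate to infinity with the others fixed \emph{positive} does not by itself drive $f$ to $m$, since the infimum one reaches is only over a lower-dimensional plane. The induction on the number of free directions is precisely what circumvents this, the coarea/slicing estimate turning a persistent slab into infinite perimeter; the base case being Part 1. A secondary technical point is the control of the \emph{large} balls in Part 3, which hinges on the finiteness $I_\epsilon<\infty$ extracted from the isoperimetric and Sobolev inequalities.
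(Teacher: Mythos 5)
Your proposal is correct, and for the central claim (\ref{limites}) it takes a genuinely different route from the paper. The paper proves (\ref{limites}) by induction on the dimension $d$: it restricts $f$ to hyperplanes $\{x_1=\mathrm{const}\}$, notes via (\ref{equivar})--(\ref{equivar1}) that almost every restriction $f_{x_1}$ has finite variation, applies the inductive hypothesis to $f_{x_1}$, and then shows $\inf f_{x_1}=\inf_{\mathbb{R}^d}f$ by integrating the drop $f(\hat z_1,0^+)-f(\hat z_1,\infty)\ge \tfrac12(\inf f_{x_1}-\inf f)$ over the infinite-measure region $[N,\infty)^{d-1}$, contradicting (\ref{equivar1}). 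Your downward induction on the set $J$ of coordinates sent to infinity exploits the same underlying mechanism --- a persistent positive gap over an infinite-measure family of lines forces infinite variation --- but detects it through the coarea formula and the perimeter of a level set $\{f^*>\lambda\}$ sliced in the direction $k_0$, rather than by integrating $f$ itself. The paper's version is more elementary (only (\ref{equivar1}) is needed, no coarea or isoperimetric input), while yours isolates the geometric picture (an unbounded slab inside a finite-perimeter unconditional set) and proves the formally stronger family of statements $Q(J)$ for joint limits. Two small points you should tighten: in the inductive step, the threshold $T$ beyond which the slice is bounded depends on the frozen coordinates $(z_k)_{k\notin J\cup\{k_0\}}$, so to exhibit an infinite-measure family of qualifying lines you should restrict those coordinates to a sub-box such as $\prod_k[x^0_k/2,x^0_k]$, where monotonicity yields a single uniform $T$; and the finiteness of $I_\epsilon=\int_{A_\epsilon}\bar f$ requires the Sobolev inequality for finite-variation functions that are not assumed integrable, which you can get from the layer-cake bound $\|(\bar f-\epsilon)^+\|_{d/(d-1)}\le\int_0^\infty|\{\bar f-\epsilon>s\}|^{(d-1)/d}ds\le C\,V(\bar f,\mathbb{R}^d)$ via the isoperimetric inequality, since you have already shown every superlevel set above $0$ has finite measure. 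Your Part 3 is in fact considerably more detailed than the paper's one-sentence dispatch of (\ref{limmax}), and the large-ball/small-ball dichotomy you introduce is exactly the point that sentence glosses over.
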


\begin{proof} Given $x  \in (0,\infty)^d$, let $t = \max\{x_1,\ldots,x_{d}\}$. Since $f$ is block decreasing, $f(x) \ge
  f(t,\ldots,t)$, so
  $\inf_{\mathbb{R}^d} f =
\lim_{t\rightarrow \infty}
  f(t,\ldots,t)$, where the limit exists by monotonicity.

Next, we may assume, by symmetry,  that $i=d$. Note that
if (\ref{limites}) fails for a fixed
$\hat{x}_d = (x_1,\ldots,x_{d-1})$,  then it
fails for all $\hat{y}_d \in (0,x_1]\times\ldots \times(0,x_{d-1}]$
by monotonicity in each variable. Thus, it suffices
to prove (\ref{limites}) for $(x_1,\ldots,x_{d-1})$ in a
full measure subset of
$(0,\infty)^{d-1}$.
 We use induction on the dimension $d$.
The result is obvious for $d=1$, so we assume it holds for $d-1$
and show
 that
 (\ref{limites}) also holds  for $d\ge 2$.
   Consider the  function of $d-1$ variables
$f_{x_1}(\cdot):=f(x_1,\cdot)$. Clearly, $f_{x_1}$ is a
block decreasing function. By (\ref{equivar}) and (\ref{equivar1}), for almost all
$x_1>0$ we have $V(f_{x_1},\mathbb{R}^{d-1})<\infty$. Using induction,
we apply (\ref{limites}) to $f_{x_1}$ and
conclude that for every $x_2,\ldots,x_{d-1}>0$,
\begin{equation*}
  \lim_{t\to \infty}f_{x_1}(t,\ldots,t,t)= \inf_{\mathbb{R}^{d - 1}} f_{x_1}
  = \lim_{t\to
  \infty}f_{x_1}(x_2,\ldots,x_{d-1},t).
\end{equation*}
Now if $\inf_{\mathbb{R}^d} f < \inf_{\mathbb{R}^{d - 1}} f_{x_1}$,  there exists an $N>0$ such that for all $t'\ge N$, and all $w_2,\dots , w_d > 0$,
  \begin{equation*}
    f(x_1, w_2,\ldots, w_d)-f(t',t',\ldots,t') \ge
    \frac{\inf_{\mathbb{R}^{d - 1}} f_{x_1} - \inf_{\mathbb{R}^d} f}{2}.
  \end{equation*}
  Thus, using (\ref{equivar1}) we derive the following contradiction:
  \begin{equation*}
    \infty>V(f,\mathbb{R}^d)\ge
    \int_{[N,\infty)^{d-1}}[f(\hat{z_1},0^+)-f(\hat{z}_1,\infty)]d\hat{z}_1\ge
  \end{equation*}
  \begin{equation*}
    \ge
    \int_{[N,\infty)^{d-1}}[f(\hat{z_1},x_1)-f(N,\ldots,N)]d\hat{z}_1  \end{equation*}
\begin{equation*}
\ge \frac{\inf_{\mathbb{R}^{d - 1}} f_{x_1} - \inf_{\mathbb{R}^d} f}{2}
\int_{[N,\infty)^{d-1}} d\hat{z}_1  =\infty.
\end{equation*}
Therefore  $\inf_{\mathbb{R}^d} f = \inf_{\mathbb{R}^{d - 1}} f_{x_1}$,
and (\ref{limites}) follows. To obtain (\ref{limmax}), note that the local integrability of $f$, together
with the existence of a limit at infinity, entail that averages will
approach this limit as $t\to\infty$.
\end{proof}

\section{Variation of the maximal function.}

We are now ready to prove Theorem \ref{t1}.

\begin{proof}
It is clear that if we add a constant to a
function, its variation does not change.
Recalling that $f$ is locally integrable and non-negative,
it is easy to check that
 $M_\mu f(x)=M_\mu
(f-\inf_{\mathbb{R}^d}f)(x)+ \inf_{\mathbb{R}^d}f$, so for simplicity we
suppose that
 $\inf_{\mathbb{R}^d}f=0$. Under the assumptions
 and with the notation of
 Lemma \ref{varlim1}, this entails
  that
 $\lim_{t\rightarrow \infty}
M_\mu f(\hat{x}_i,t)  =0$ (even if
$f\notin L^1 (\mathbb{R}^d)$).
Now by  (\ref{equivar}) and (\ref{equivar1}) it is enough to
check that
\begin{equation*}
  \sum_{i=1}^d\int_{[0, \infty)^{d-1}}
  M_\mu f(\hat{x}_i,0^+)d\hat{x}_i\le c(\mu,d)\sum_{k=1}^d
  \int_{[0, \infty)^{d-1}}
  f(\hat{x}_k,0^+)d\hat{x}_k.
\end{equation*}
Because of the equivalence of all norms on $\mathbb{R}^d$, and in particular,
between $\mu$ and $\|\cdot\|_\infty$, it suffices to
consider the maximal operator defined by cubes and to prove that
\begin{equation}\label{desigualdaddenormas}
  \sum_{i=1}^d\int_{[0, \infty)^{d-1}}
  M_\infty f(\hat{x}_i,0^+)d\hat{x}_i\le c(d)\sum_{k=1}^d
  \int_{[0, \infty)^{d-1}}
  f(\hat{x}_k,0^+)d\hat{x}_k.
\end{equation}
Assume that $i=d$. We want to estimate
\begin{equation*}
  \int_{[0, \infty)^{d-1}}
  M_\infty f(y,0^+)dy.
\end{equation*}
To this end, we divide $[0, \infty)^{d-1}$ in $d!$ suitable subsets
as follows: Denote by $\mathcal{P}_{n}$ the set of all permutations
of $n$ elements. For each
  $\sigma \in \mathcal{P}_{d-1}$ we define
\begin{equation*}
  A_\sigma= \{y\in [0, \infty)^{d-1} : y_{\sigma(1)}\ge
  y_{\sigma(2)}\ge \ldots \ge y_{\sigma(d-1)}\}.
\end{equation*}
Then $\bigcup_{\sigma \in \mathcal{P}_{d-1}}A_\sigma =
[0, \infty)^{d-1}$. By symmetry, it is enough to
consider  the
identity permutation, the other estimates being the same.
So we take
\begin{equation}\label{asigma}
  A_\sigma= \{y\in [0, \infty)^{d-1} : y_1\ge
  y_{2}\ge \ldots \ge y_{d-1}\}.
\end{equation}
Fix $y\in A_\sigma$. To estimate $M_\infty f(y,0)$, let $B_\infty (a, k)$
be a cube (i.e., an $\ell_\infty$-ball)   containing $(y,0)$.
Set $p^{k,y}:=(\max\{0,y_1-k\},\ldots,\max\{0,y_{d-1}-k\},0)$, and
note that
 $0\le p^{k,y}_i\le |a_i|$  for every $i=1,\ldots,d -1$. Thus,
by Lemma \ref{bdl2} we have
\begin{equation*}
  M_\infty f(y,0)=\sup_{k>0}\frac{1}{|B_\infty (p^{k,y}, k)|}\int_{B_\infty (p^{k,y}, k)}f(u)du.
\end{equation*}
We introduce the auxiliary endpoints $y_0:=\infty$ and
$y_d:= 0$. With this notation,
for every $k>0$ there exists a $j\in\{0,\ldots,d-1\}$ such that
\begin{equation}\label{ordencoordenadas}
y_0 \ge y_1\ge \ldots\ge y_j\ge 4k\ge y_{j+1}\ge\ldots\ge
y_{d-1}\ge y_d.
\end{equation}

Note that
$q:=(y_1-k,\ldots,y_j-k, 0,\ldots,0)\in [0,\infty)^d$
satisfies $q\le p^{k,y}$ under the partial order induced
by the coordinates, i.e., for all $i=1,\dots, d$, $q_i\le p_i^{k,y}$.
Since $ p_i^{k,y} \le |a_i|$ also,
by Lemma \ref{bdl2}, the average of a block decreasing function over
$B_\infty (a, k)$ is smaller than the average over
$B_\infty (q, k)$. Let $u\in B_\infty (q, k)$ be arbitrary.
By (\ref{ordencoordenadas}),
\begin{equation*}
 f(u)\le
f(y_1-2k,\ldots,y_j-2k,u_{j+1},\ldots,u_{d})\le
f\left(\frac{y_1}{2},\ldots,\frac{y_j}{2},u_{j+1},\ldots,u_{d}\right).
\end{equation*}
 Thus,
\begin{equation*}
 \frac{1}{|B_\infty (a, k)|}\int_{B_\infty (a, k)} f(u)du\le \frac{1}{|B_\infty(q,k)|}\int_{B_\infty(q,k)}
 f(u)du
\end{equation*}
\begin{equation*}
 \le \frac{1}{(2k)^{d-j}}\int_{[-k,k]^{d-j}}f\left(\frac{y_1}{2},\ldots,\frac{y_j}{2},u_{j+1},\ldots,u_{d}\right)du_{j+1}\cdots
  du_d
\end{equation*}
\begin{equation}\label{intermedio}
  \le \frac{1}{k^{d-j}}\int_{[0,k]^{d-j}}f\left(\frac{y_1}{2},\ldots,\frac{y_j}{2},u_{j+1},\ldots,u_{d}\right)du_{j+1}\cdots
  du_d.
\end{equation}
Observe that for averages of block decreasing functions over cubes
centered at the origin, the smaller the radius, the greater
the average. Thus, if $j\le d-2$, the term in
(\ref{intermedio}) is bounded by
\begin{equation*}
\left(\frac{4}{y_{j+1}}\right)^{d-j}\int_{\left[0,\frac{y_{j+1}}{4}\right]^{d-j}}f\left(\frac{y_1}{2},\ldots,\frac{y_j}{2},u_{j+1},\ldots,u_{d}\right)du_{j+1}\cdots
  du_d,
\end{equation*}
while if $j=d-1$, then the term in
(\ref{intermedio}) is bounded by $f(y/2,0)$.
 Hence, for every $y\in A_\sigma$,
\begin{equation*}
  Mf(y,0)\le \max\left\{f\left(\frac{y}{2},0\right),\max_{0\le j\le
  d-2}\left(\frac{4}{y_{j+1}}\right)^{d-j}G_j(y_1,\ldots,y_j,y_{j+1})\right\},
\end{equation*}
where for $1\le j \le d - 2$, $G_j$ is defined via
\begin{equation*}
  G_j (y_1,\ldots,y_j,y_{j+1}):=\int_{\left[0,\frac{y_{j+1}}{4}\right]^{d-j}}f\left(\frac{y_1}{2},\ldots,\frac{y_j}{2},u_{j+1},\ldots,u_{d}\right)
  du_{j+1}\cdots
  du_d,
\end{equation*}
and if $j = 0$, we just use the same formula, but integrating
$f(u_1,\dots, u_d)$ (i.e., no $y_i$ appears as an argument of $f$).

Thus,
\begin{equation}\label{upper}
  \int_{A_\sigma}Mf(y,0)dy \le
  \int_{[0,\infty)^{d-1}}  f\left(\frac{y}{2},0\right) dy\,
  +\sum_{j=0}^{d-2}\int_{A_\sigma}\left(\frac{4}{y_{j+1}}\right)^{d-j}
  G_j(y_1,\ldots,y_j,y_{j+1})dy.
\end{equation}
Note that for $j\in\{0,\ldots,d-2\}$ (see \ref{asigma}),
\begin{equation}\label{bigger}
A_\sigma\subset \{y\in [0,\infty)^{d-1} : y_{j+1}\ge
  y_{j+2}\ge \ldots \ge y_{d-1}\},
\end{equation}
and
\begin{equation}\label{simplex}
\int_0^{y_{j+1}}\left( \cdots\left( \int_0^{y_{d-2}}dy_{d-1}\right)
\cdots
\right) dy_{j+2} =\frac{y_{j+1}^{d-j-2}}{(d-j-2)!}.
\end{equation}

Using Fubini's Theorem and the definition of $G_j$ we have
\begin{equation}\label{fubini}
\int_{[0,\infty)^{j+1}}
  \frac{G_j(y_1,\ldots,y_j,y_{j+1})}{y_{j+1}^2}dy_1\cdots dy_{j+1}
\end{equation}
\begin{equation} \label{fubini2}
=
\int_{[0,\infty)^{j+1}}
  \frac{1}{y_{j+1}^2}\int_{\left[0,\frac{y_{j+1}}{4}\right]^{d-j}}f\left(\frac{y_1}{2},\ldots,\frac{y_j}{2},u_{j+1},\ldots,u_{d}\right)
  du_{j+1}\cdots
  du_d dy_1\cdots dy_{j+1}
 \end{equation}
 \begin{equation} \label{fubini3}
\le
\int_{[0,\infty)^{d}}
 \left(f\left(\frac{y_1}{2},\ldots,\frac{y_j}{2},u_{j+1},\ldots, u_{d}\right) \int_{\max\{u_{j+1},\ldots, u_{d}\}}^\infty \frac{dy_{j+1}}{y_{j+1}^2}\right)
 dy_1\cdots dy_{j} du_{j+1}\cdots
  du_d
 \end{equation}
 \begin{equation} \label{fubini4}
=
\int_{[0,\infty)^{d}} \frac{f(\frac{y_1}{2},\ldots,\frac{y_j}{2},u_{j+1},\ldots,u_{d})}{\max\{u_{j+1},\ldots,u_d\}}
 dy_1\cdots dy_{j} du_{j+1}\cdots
  du_d.
 \end{equation}
 The assumption $j\le d -2$ is used in the next application
 of Fubini's Theorem. To prove the bound
 \begin{equation}\label{fubini5}
\int_{[0,\infty)^d}
  \frac{f(u)}{\max\{u_{j+1},\ldots,u_d\}}
   du\le
\sum_{k=j+1}^d\int_{[0,\infty)^{d-1}}f(\hat{u}_k,0)
   d\hat{u}_k,
\end{equation}
split $[0,\infty)^d$ into the regions  where each
$u_k$ is the maximum value. Suppose, for instance,
that we are considering
$E_d := [0,\infty)^d\cap \{\max\{u_{j+1},\ldots,u_d\} = u_d\}$.
Now $j + 1\le d - 1$, so we can select an $u_k$ with $k\ne d$
and replace $u_k$ with $0$. Since $f$ is block decreasing,
\begin{equation*}
\int_{E_d}
  \frac{f(u)}{u_d}
   du\le
\int_{[0,\infty)^{d-1}}f(\hat{u}_k,0)\int_0^{u_d} \frac{1}{u_d} d u_k \,
   d\hat{u}_k = \int_{[0,\infty)^{d-1}}f(\hat{u}_k,0)
   d\hat{u}_k,
\end{equation*}
and (\ref{fubini5}) follows.
Next, using (\ref{upper}--\ref{fubini5}), we obtain
\begin{equation*}
  \int_{A_\sigma}Mf(y,0)dy \le 2^{d-1}
  \int_{[0,\infty)^{d-1}}f(y,0)dy
\end{equation*}
\begin{equation*}
 + \sum_{j=0}^{d-2}\frac{4^{d-j}}{(d-j-2)!}\int_{[0,\infty)^d}
\frac{f(\frac{y_1}{2},\ldots,\frac{y_j}{2},u_{j+1},\ldots,u_{d})}{\max\{u_{j+1},\ldots,u_d\}}
  dy_1\cdots dy_j du_{j+1}\cdots du_d=
\end{equation*}
\begin{equation*}
=2^{d-1}
  \int_{[0,\infty)^{d-1}}f(y,0)dy\,+ \sum_{j=0}^{d-2}\frac{2^{2d-j}}{(d-j-2)!}\int_{[0,\infty)^d}
  \frac{f(u)}{\max\{u_{j+1},\ldots,u_d\}}
   du\le
\end{equation*}
\begin{equation*}
 \le 2^{d-1}
  \int_{[0,\infty)^{d-1}}f(y,0)dy\,+  \sum_{j=0}^{d-2}\frac{2^{2d-j}}{(d-j-2)!}\sum_{k=j+1}^d\int_{[0,\infty)^{d-1}}f(\hat{u}_k,0)
   d\hat{u}_k\le
\end{equation*}
\begin{equation*}
\le    c^\prime (d)\sum_{k=1}^d\int_{[0,\infty)^{d-1}}f(\hat{u}_k,0)
   d\hat{u}_k .
\end{equation*}

Finally, (\ref{desigualdaddenormas}) follows by applying the same
estimate to each of the the $d!$ regions $A_\sigma$ and adding up.

\end{proof}

\begin{remark}
Recalling  inequalities (\ref{derdirec}) and (\ref{vardirec}),
it is natural to define a ``partial variation" for each variable $x_i$:
\begin{equation*}
  V_i(f,\mathbb{R}^d):=\int_{\mathbb{R}^{d-1}}V(f_{\hat{x}_i},\mathbb{R})d\hat{x}_i.
\end{equation*}
We have seen that the variation of $f$ controls the variation
of $M_\mu f$ (Theorem \ref{t1}). Here we show that the
partial variations of $f$ do not individually control the corresponding
partial variations of $M_\mu f$, something that makes the proof
of Theorem \ref{t1} harder than would otherwise be.
To see that the
inequality
  \begin{equation*}
    V_i(M_\mu f,\mathbb{R}^d)\le c(\mu, d)V_i(f,\mathbb{R}^d)
  \end{equation*}
may fail, consider the following counterexample in the case
$\mu = \|\cdot\|_\infty$.
Let $g$ be a non increasing function on $[0,\infty)$ such that
$g(0)=1$ and $g(\infty)=0$. Suppose also that $\|g\|_1=1$ and
$\int_0^\infty g(u)du/u=\infty$.
For $x_1,x_2\in\mathbb{R}$ and $m\in\mathbb{N}$ we define the block decreasing functions
$f_m(x_1,x_2)=mg(m|x_1|)g(|x_2|)$.
Then $V_2(f_m,\mathbb{R}^2)=4\int_0^\infty
f_m(x_1,0)dx_1=4g(0)\int_0^\infty mg(mx_1)dx_1=4g(0)\|g\|_1=4$.
On the other hand, for $x_1>0$,
\begin{equation*}
M_\infty f_m(x_1,0)\ge \frac{1}{x_1^2}\int_0^{x_1} mg(mu_1)du_1\int_0^{x_1}
g(u_2)du_2=
\end{equation*}
\begin{equation*}
= \frac{1}{{x_1}^2}\int_0^{mx_1} g(u_1)du_1\int_0^{x_1} g(u_2)du_2=:
F_m(x_1).
\end{equation*}
Now
$\lim_{m\rightarrow\infty}F_m(x_1)=\|g\|_1\int_0^{x_1}g(u_2)du_2/x_1^2=\int_0^{x_1}g(u_2)du_2/x_1^2$,
so, using monotone convergence and the Fubini-Tonelli theorem, we obtain
\begin{equation*}
\lim_{m\rightarrow\infty}V_2(M_\infty f_m,\mathbb{R}^2)\ge\lim_{m\rightarrow\infty}\int_0^\infty
  F_m (x_1) dx_1=
\end{equation*}
\begin{equation*}
=   \int_0^\infty
 \left(\int_0^{x_1}g(u_2)du_2\right) \frac{dx_1}{x_1^2}
   =\int_0^\infty g(u_2)\frac{du_2}{u_2}=\infty.
\end{equation*}

It is easy to check that this
example  can be adapted to  $M_\mu f$, where  $\mu$ is any  unconditional norm.
\end{remark}

\section{Continuity of the maximal function.}

In this section we prove Theorem \ref{t2},
showing that if $f$ is a
 block decreasing function of  finite variation, then
 $M_\mu f$ is continuous, except perhaps on a negligible $(d-1)$-dimensional
Hausdorff measurable set.

First we recall the notion of
approximate continuity (see \cite[p. 47 and 209]{EvGa}). Note that
the definitions and results from \cite{EvGa} are given in terms
of euclidean balls, so we need to use
 the equivalence of all norms in $\mathbb{R}^d$.

\begin{definition}
  Let $f:\mathbb{R}^d\rightarrow \mathbb{R}$. We say that $l$ is the
  approximate limit of $f$ as $y\to x$, and write
  \begin{equation*}
    \operatorname{ap}\lim_{y\to x}f(y)=l,
  \end{equation*}
  if for each $\varepsilon >0$,
  \begin{equation*}
    \lim_{r\to 0}\frac{|B_2(x,r)\cap\{|f-l|\ge
    \varepsilon\}|}{|B_2(x,r)|}=0.
  \end{equation*}
\end{definition}
That is, if $l$ is the approximate limit of $f$ at $x$, for all
$\varepsilon >0$ the sets $\{|f-l|\ge
    \varepsilon\}$ have density zero at $x$.

\begin{definition}\label{fsup}
  Let $f:\mathbb{R}^d\rightarrow \mathbb{R}$. We say that $f_{sup}(x)$ is the
  approximate lim sup of $f$ as $y\to x$ if
  \begin{equation*}
    f_{sup}(x):=\operatorname{ap} \limsup_{y\to x}f(y)=\inf
    \left\{t : \lim_{r\to 0}\frac{|B_2(x,r)\cap\{f>t\}|}{r^d}=0\right\}.
  \end{equation*}
Likewise,  $f_{inf}(x)$ is the
  approximate lim inf of $f$ as $y\to x$ if
  \begin{equation*}
    f_{inf}(x):=\operatorname{ap} \liminf_{y\to x}f(y)=\sup
    \left\{t : \lim_{r\to 0}\frac{|B_2(x,r)\cap\{f<t\}|}{r^d}=0\right\}.
  \end{equation*}
\end{definition}

It is well known that for measurable functions
the approximate limit exists a.e.
\cite[Theorem 3, p.47]{EvGa}. For locally integrable functions, this follows
from the Lebesgue Differentiation Theorem. For locally integrable
functions of finite variation,  the approximate limsup and
liminf are finite $\mathcal{H}^{d-1}$ a.e. on $\mathbb{R}^d$ (cf.
\cite[Theorem 2, p.211]{EvGa}; actually, the results
from \cite{EvGa} are stated
for $BV$ functions, so $f\in L^1$. But it is easy to check that
local integrability of $f$ suffices to carry out the arguments).

\begin{definition}\label{jumpset}
The {\em jump set} $J_f:=\{f_{inf}(x)<f_{sup}(x)\}$ of $f$ is the set of points where the
approximate limit of $f$ does not exist.
\end{definition}

\begin{definition}
  Let $v$ be a unit vector in $\mathbb{R}^d$ and let $x\in\mathbb{R}^d$. We
  define the half-spaces associated to $x$ and $v$ by
  \begin{equation*}
    H_v^+:=\{y\in\mathbb{R}^d:v\cdot(y-x)\ge 0\},
  \end{equation*}
    \begin{equation*}
    H_v^-:=\{y\in\mathbb{R}^d:v\cdot(y-x)\le 0\},
  \end{equation*}
where the symbol $\cdot$ denotes the usual scalar product in $\mathbb{R}^d$.
\end{definition}

While the notation does not make it explicit, $H_v^+$ and $H_v^-$ depend
on $x$ and contain it as a boundary point. Set

$$F(x) := \frac{f_{sup}(x)+f_{inf}(x)}{2}.$$

From \cite[p.213, Theorem 3]{EvGa} we get
\begin{theorem}\label{densidadvspromedio}
If $f\in L_{loc}^1(\mathbb{R}^d)$ and
  $V(f,\mathbb{R}^d)<\infty$, then,
  \\
    i) for $\mathcal{H}^{d-1}$ a.e. $x\in \mathbb{R}^d-J_f$,
  \begin{equation*}
  \lim_{r\to 0^+}\frac{1}{|B_2(x,r)|}\int_{B_2(x,r)}|f(y)-F(x)|^{d/(d-1)}dy=0,
 \end{equation*}
  and\\
  ii) for $\mathcal{H}^{d-1}$ a.e. $x\in J_f$, there exists a unit
  vector $v\equiv v(x)$ such that
  \begin{equation*}
    \lim_{r\to 0^+} \frac{1}{|B_2(x,r)|}\int_{B_2(x,r)\cap
    H_v^-}|f(y)-f_{sup}(x)|^{d/(d-1)}dy=0,
  \end{equation*}
  and
\begin{equation*}
    \lim_{r\to 0^+} \frac{1}{|B_2(x,r)|}\int_{B_2(x,r)\cap
    H_v^+}|f(y)-f_{inf}(x)|^{d/(d-1)}dy=0.
  \end{equation*}
\end{theorem}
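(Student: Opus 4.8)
The plan is to reduce the statement to the case $f\in BV(\mathbb{R}^d)$, where it is precisely \cite[p.~213, Theorem~3]{EvGa}, by a localization argument. The key observation is that every conclusion in i) and ii) is a purely \emph{local} assertion about the behaviour of $f$ as $r\to 0^+$: the quantities $f_{sup}(x)$, $f_{inf}(x)$, $F(x)$ and the membership $x\in J_f$ all depend only on the restriction of $f$ to an arbitrarily small neighbourhood of $x$, and each integral appearing in the two parts only involves $B_2(x,r)$ for small $r$. Consequently, if we can modify $f$ outside a neighbourhood of a given point so as to obtain a genuine $BV$ function without altering $f$ near that point, the cited theorem applies verbatim there.

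First I would fix $n\in\mathbb{N}$ and choose a cutoff $\phi_n\in C_c^\infty(\mathbb{R}^d)$ with $0\le\phi_n\le 1$, $\phi_n\equiv 1$ on $B_2(0,n)$, and $\operatorname{supp}\phi_n\subset B_2(0,n+1)$. Because $f\in L^1_{loc}(\mathbb{R}^d)$ and $\phi_n$ has compact support, $f\phi_n\in L^1(\mathbb{R}^d)$; and by the Leibniz rule for $BV$ functions, $D(f\phi_n)=\phi_n\,Df+f\,\nabla\phi_n\,dx$, where the first summand is a finite measure since $V(f,\mathbb{R}^d)=|Df|(\mathbb{R}^d)<\infty$, and the second is in $L^1$ since $f\in L^1_{loc}$ and $\nabla\phi_n$ is bounded with compact support. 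Hence $f\phi_n\in BV(\mathbb{R}^d)$, so \cite[p.~213, Theorem~3]{EvGa} yields i) and ii) for $f\phi_n$ off some $\mathcal{H}^{d-1}$-null set $N_n$.

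Next I would transfer these conclusions from $f\phi_n$ to $f$. Since $f\phi_n=f$ on the open set $B_2(0,n)$, for every $x\in B_2(0,n)$ and all sufficiently small $r$ we have $B_2(x,r)\subset B_2(0,n)$, so each average in i) and ii) is unchanged when $f$ is replaced by $f\phi_n$; in particular $f_{sup}$, $f_{inf}$, $F$ and $J_f$ coincide with their $f\phi_n$-counterparts throughout $B_2(0,n)$, so the decomposition into $J_f$ and its complement agrees there as well. Therefore i) and ii) hold for $f$ at every point of $B_2(0,n)\setminus N_n$. Taking the union over $n$ and using that $\mathcal{H}^{d-1}$, being an outer measure, is countably subadditive, the exceptional set $\bigcup_n N_n$ remains $\mathcal{H}^{d-1}$-null, and since $\bigcup_n B_2(0,n)=\mathbb{R}^d$ the conclusion holds $\mathcal{H}^{d-1}$ a.e. on all of $\mathbb{R}^d$.

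The only point requiring genuine care---the main obstacle---is the verification that $f\phi_n\in BV(\mathbb{R}^d)$, that is, that the distributional derivative of the truncated function is again representable by a finite Radon measure. This is exactly where both hypotheses enter: $V(f,\mathbb{R}^d)<\infty$ controls the $\phi_n\,Df$ term, while $f\in L^1_{loc}$ controls the $f\,\nabla\phi_n$ term. Once this is in hand, the locality of all quantities involved makes the patching routine, and no integrability of $f$ on the whole of $\mathbb{R}^d$ is needed, which is precisely the relaxation from $BV$ to locally integrable functions of finite variation announced in the remark preceding the statement.
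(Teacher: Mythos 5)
Your argument is correct, but it proceeds differently from the paper. The paper gives no real proof of this statement: it simply quotes \cite[p.~213, Theorem~3]{EvGa} and, in the surrounding discussion, remarks that although the results in \cite{EvGa} are stated for $BV$ functions (hence $f\in L^1$), ``it is easy to check that local integrability of $f$ suffices to carry out the arguments'' --- i.e.\ the authors propose re-running the Evans--Gariepy proof under the weaker hypothesis. You instead keep the cited theorem as a black box and reduce to it by a cutoff: $f\phi_n\in BV(\mathbb{R}^d)$ via the Leibniz rule $D(f\phi_n)=\phi_n\,Df+f\,\nabla\phi_n\,dx$ (both terms finite, the first by $V(f,\mathbb{R}^d)<\infty$ and the second by $f\in L^1_{loc}$), then the strict locality of $f_{sup}$, $f_{inf}$, $F$, $J_f$ and of the averages over $B_2(x,r)$ for small $r$ transfers the conclusion from $f\phi_n$ to $f$ on $B_2(0,n)$, and countable subadditivity of $\mathcal{H}^{d-1}$ handles the union of exceptional sets. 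This is a legitimate and arguably preferable route: it is fully checkable without opening up the internals of the Evans--Gariepy proof, at the modest cost of invoking the product rule for $BV\times C^1_c$ (which does hold for $f\in BV_{loc}\cap L^1_{loc}$, e.g.\ by mollification). The paper's route avoids the cutoff but leaves the reader to verify that every step of the cited proof is local.
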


In the preceding Theorem, euclidean balls
can be replaced by balls defined using any other norm $\mu$,
noting that integrands are non-negative, and then
giving up some constant. Of course, the corresponding limits are
still zero.
So for $\mathcal{H}^{d-1}$ almost every
$x\in \mathbb{R}^d-J_f$, $F= f^*$, the precise representative of $f$
defined using $\mu$.

A different  definition of approximate limits
and related notions appears
in \cite{AFP} (using integral averages, in the line of
the preceding theorem).
But all such definitions coincide
 $\mathcal{H}^{d-1}$ a.e. with the ones given above, so it is actually
 immaterial which ones we use.

Since approximate limit exist for a.e. $x\in
\mathbb{R}^d$,  all the functions $f^*$, $F$, $f_{inf}$ and
$f_{sup}$ represent the same equivalence class $[f]$. To study  the continuity of the maximal function it will be convenient for us to use
$f_{sup}$.

\begin{lemma}
  If $f$ is block decreasing on $\mathbb{R}^d$, then
  $f_{sup}$ is block decreasing.
\end{lemma}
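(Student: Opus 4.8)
The plan is to verify the two defining properties of a block decreasing function directly for $f_{sup}$: that it is unconditional, and that its restriction to $[0,\infty)^d$ is decreasing in each variable. It is convenient to work with the super-level sets $E_t := \{f > t\}$ and to use the characterization from Definition \ref{fsup}, namely $f_{sup}(x) = \inf T(x)$, where $T(x) := \{t : \lim_{r\to 0} r^{-d}|B_2(x,r)\cap E_t| = 0\}$ is the set of heights at which $E_t$ has density zero at $x$. Since $f$ is block decreasing, each $E_t$ is unconditional, and on the cone $f$ decreases in every coordinate; these are the only facts about $f$ I will use.

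For unconditionality of $f_{sup}$ I would note that each coordinate reflection $R_i\colon x_i\mapsto -x_i$ is a measure-preserving linear isometry of the Euclidean norm; hence it carries $B_2(x,r)$ onto $B_2(R_i x,r)$ while fixing $E_t$ as a set, so the density of $E_t$ at $R_i x$ equals its density at $x$. Consequently $T(R_i x) = T(x)$ and $f_{sup}(R_i x) = f_{sup}(x)$; iterating over $i$ gives $f_{sup}(x) = f_{sup}(|x_1|,\dots,|x_d|)$.

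For monotonicity it suffices, by symmetry, to show that $f_{sup}(x + he_d) \le f_{sup}(x)$ for every $x \in [0,\infty)^d$ and $h > 0$, and for this I would prove the inclusion $T(x) \subseteq T(x + he_d)$. Using the translation $\tau(y) = y - he_d$, which is measure-preserving and sends $B_2(x+he_d,r)$ onto $B_2(x,r)$, the density of $E_t$ at $x + he_d$ equals the density of $E_t - he_d$ at $x$. The key step is the set inclusion
\begin{equation*}
(E_t - he_d) \cap B_2(x,r) \subseteq E_t \cap B_2(x,r) \qquad (0 < r < h/2),
\end{equation*}
which I would verify pointwise: if $y \in B_2(x,r)$ satisfies $y + he_d \in E_t$, then since $x_d \ge 0$ and $r < h/2$ one has $y_d + h > 0$ and $0 \le |y_d| \le y_d + h$, so block decreasingness together with unconditionality give $f(y) = f(|y_1|,\dots,|y_d|) \ge f(|y_1|,\dots,|y_{d-1}|, y_d+h) = f(y+he_d) > t$, i.e. $y \in E_t$. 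Granting this inclusion, the density of $E_t - he_d$ at $x$ is dominated by that of $E_t$ at $x$, so any $t \in T(x)$ lies in $T(x + he_d)$; taking infima yields $f_{sup}(x + he_d) \le f_{sup}(x)$.

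The step I expect to be delicate is exactly this pointwise inclusion, because near a boundary point of the cone the ball $B_2(x,r)$ straddles the coordinate hyperplanes and the $d$-th coordinate of $y$ can be negative. The restriction $r < h/2$ and the unconditionality of $f$ are what save the argument, allowing one to replace each coordinate of $y$ by its absolute value and reduce to the genuine monotonicity of $f$ on the cone; this also dispenses with any separate treatment of the boundary. The remaining ingredients -- translation invariance of Lebesgue measure, reflection symmetry, and the monotonicity of $T(x)$ as an \emph{upper} set in $t$ (so that $\inf T(x)$ is the relevant threshold) -- are routine.
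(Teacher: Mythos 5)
Your proof is correct and follows essentially the same route as the paper's: both arguments fix $0<r<h/2$, translate by $-he_d$, and use unconditionality plus the inequality $|y_d|\le y_d+h$ (equivalently $y_d\ge|y_d-h|$ in the paper's parametrization) to show the level set $\{f>t\}$ has no smaller density at $x$ than at $x+he_d$. The only difference is cosmetic -- you phrase it via the sets $T(x)$ and also spell out the (routine) unconditionality of $f_{sup}$, which the paper leaves implicit.
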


\begin{proof}
 Fix $x_1,\ldots,x_d\ge 0$, $h>0$, and $t > 0$. We  prove that $f_{sup}(x)\ge
  f_{sup}(x+he_d)$, the argument being the same for the other coordinates.
  By Definition \ref{fsup}, it is enough to show that for all sufficiently
  small $r >0$, $|B_2(x+he_d,r)\cap\{f>t\}|\le| B_2(x,r)\cap\{f>t\}|$.
Suppose $0<r<h/2$. Given
$y\in B_2(x+he_d,r)\cap\{f>t\}$, we have $y - h e_d \in B_2(x,r)$, and by the choice of $r$,
$y_d\ge |y_d-h|$, so $t < f(y)\le f(y-he_d)$. Thus
$\left(B_2(x+he_d,r)\cap\{f>t\}\right) - h e_d \subset  B_2(x,r)\cap\{f>t\}$,
and $|B_2(x+he_d,r)\cap\{f>t\}|\le| B_2(x,r)\cap\{f>t\}|$ follows by the translation invariance of Lebesgue measure.
\end{proof}

The next lemma  states that in any half-ball resulting from the intersection of an euclidean ball
$B_2 (x,r)$ with a half-space having $x$ in its boundary, there is a
comparable $\mu$-ball contained in the half-ball and containing
$x$ (as a boundary point, of course).

\begin{lemma}\label{conl3}
  Let
  $\mu$ be an arbitrary norm on $\mathbb{R}^d$, let $r>0$, and let
  $x,v\in\mathbb{R}^d$, where $\|v\|_2=1$. Then there exists a constant $k_\mu>0$ such
   that for every half-ball $B_2 (x,r)\cap H_v^+$,
  we can find a center $c\in\mathbb{R}^d$ and a radius $\rho>0$ with
  \begin{equation*}
    x\in B_\mu(c,\rho)\subset B_2(x,r)\cap H_v^+ \quad\textnormal{ and
    }\quad\frac{|B_\mu(c,\rho)|}{|B_2(x,r)|}\ge k_\mu.
  \end{equation*}
\end{lemma}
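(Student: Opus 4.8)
The plan is to reduce to a normalized configuration and then produce the desired $\mu$-ball by an explicit construction whose volume ratio depends only on $\mu$ and $d$, and not on $v$, $x$, or $r$.

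First I would normalize. Both the containment relations and the quantity $|B_\mu(c,\rho)|/|B_2(x,r)|$ are invariant under the affine map $y\mapsto (y-x)/r$: it sends $B_2(x,r)$ to $B_2(0,1)$, sends $H_v^+=\{y:\ v\cdot(y-x)\ge 0\}$ to $\{z:\ v\cdot z\ge 0\}$, carries $\mu$-balls to $\mu$-balls (rescaling center and radius), and preserves all volume ratios since both volumes scale by $r^d$. Hence it suffices to treat $x=0$, $r=1$, so the half-ball is $B_2(0,1)\cap H_v^+$ with $H_v^+=\{z:\ v\cdot z\ge 0\}$ and $0$ lying on the flat face.

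Next, fix $a>0$ with $a\|y\|_2\le\mu(y)$ for all $y$ (norm equivalence), which gives $B_\mu(p,\rho)\subset B_2(p,\rho/a)$ for every $p,\rho$. By compactness of the $\mu$-unit sphere choose $w_0$ with $\mu(w_0)=1$ and $v\cdot w_0=\max\{v\cdot w:\ \mu(w)\le 1\}$; note $\|w_0\|_2\le 1/a$. I would set $\rho:=a/2$ and $c:=\rho\, w_0$, and take the candidate $B_\mu(c,\rho)$. Since $-\rho w_0$ is the lowest point of $B_\mu(0,\rho)$ in the direction $v$, translating by $c$ places the lowest point of $B_\mu(c,\rho)$ in the direction $v$ exactly at $0$; concretely $\min\{v\cdot y:\ y\in B_\mu(c,\rho)\}=v\cdot c-\rho\,(v\cdot w_0)=0$, so $B_\mu(c,\rho)\subset H_v^+$ and $0$ is a boundary point of the ball (indeed $\mu(0-c)=\rho$, so $0\in B_\mu(c,\rho)$). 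For the Euclidean containment, every $y\in B_\mu(c,\rho)$ satisfies $\|y\|_2\le\|y-c\|_2+\|c\|_2\le \rho/a+\rho/a=2\rho/a=1$, giving $B_\mu(c,\rho)\subset B_2(0,1)$. Finally $|B_\mu(c,\rho)|=\rho^d|B_\mu(0,1)|=(a/2)^d|B_\mu(0,1)|$, so the ratio equals $k_\mu:=(a/2)^d|B_\mu(0,1)|/|B_2(0,1)|>0$, uniform in $v$, $x$, $r$.

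The only delicate point is forcing the ball into $H_v^+$ while keeping it of definite size and still containing $0$. A naive attempt — centering on the $v$-axis and bounding the ball's extent in the $-v$ direction by the crude estimate $\rho/a$ — fails, since it would force $\mu(v)\le a$, which is false in general. The fix is to slide the ball along the extremal direction $w_0$ so that its support in the direction $v$ is attained precisely at $0$, making the half-space containment tight rather than wasteful; this is the sole place where compactness of the $\mu$-sphere is needed (only to guarantee the maximizer $w_0$ exists). Everything else is routine.
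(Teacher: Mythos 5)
Your proof is correct and takes essentially the same route as the paper's: both fix a $\mu$-ball of radius a definite fraction of $r$ centered at the origin and then translate it into $H_v^+$ so that $x$ lies on its boundary, concluding with a volume ratio depending only on $\mu$ and $d$. Your version merely makes explicit, via the extremal direction $w_0$ on the $\mu$-unit sphere, the translation that the paper justifies in one line ``by convexity.''
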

\begin{proof}
By a translation we may assume that $x = 0$. Let $B_\mu (0,\rho)$
be the largest $\mu$-ball contained in $B_2 (0,r/4)$. By the convexity
of $B_\mu (0,\rho)$, it can be translated, say, to $B_\mu (c,\rho)$,
in such a way that $0$ belongs to the boundary of $B_\mu (c,\rho)$
and this ball is contained in $H_v^+$. Since
$0\in B_\mu (c,\rho)\subset B_2 (c, r/4) \subset B_2 (0, r)$,
it follows that $B_\mu (c,\rho)\subset B_2 (0,r)\cap H_v^+$. Furthermore,
if $t > 0$, then $|B_\mu (c, t \rho)| = t^d |B_\mu (c, \rho)|$,
by the scaling properties of Lebesgue measure. Using the fact that
all norms in $\mathbb{R}^d$ are equivalent, we let $t>0$
be the smallest real number such that $B_2 (0, r)\subset B_\mu (0, t \rho)$,
and conclude that $|B_2 (0, r)| \le t^d |B_\mu (c, \rho)|$. Then we take $k_\mu = t^d$.
\end{proof}

\begin{lemma}\label{conl2}
  Let  $0 \le f\in L^1_{loc}(\mathbb{R}^d)$, let
   $V(f,\mathbb{R}^d)<\infty$, and let
  $\mu$ be a norm.
  Then for
  $\mathcal{H}^{d-1}$ almost every $x\in\mathbb{R}^d$, we have $M_\mu f(x)\ge
  f_{sup}(x)$.
\end{lemma}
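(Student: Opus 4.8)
The plan is to prove the pointwise bound $\mathcal{H}^{d-1}$-a.e.\ by splitting $\mathbb{R}^d$ into the complement of the jump set $J_f$ and $J_f$ itself, discarding in each case an $\mathcal{H}^{d-1}$-null exceptional set, and exhibiting in each case a family of admissible $\mu$-balls (containing $x$) whose averages of $f$ tend to at least $f_{sup}(x)$. First I would restrict attention to the full-measure set where $f_{sup}$ and $f_{inf}$ are finite and where the conclusions of Theorem \ref{densidadvspromedio} hold; by the remark following that theorem, its $L^{d/(d-1)}$-averaging statements persist with euclidean balls replaced by $\mu$-balls, at the cost of a harmless constant.

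On $\mathbb{R}^d\setminus J_f$: for $\mathcal{H}^{d-1}$-a.e.\ such $x$, part (i) of Theorem \ref{densidadvspromedio} gives $F(x)=f^*(x)=f_{sup}(x)$ together with convergence of the averages of $f$ over balls shrinking to $x$. Since balls centered at $x$ are admissible in the supremum defining $M_\mu f(x)$, passing to the limit of these centered averages immediately yields $M_\mu f(x)\ge F(x)=f_{sup}(x)$. This case needs nothing beyond the Lebesgue-type differentiation statement.

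On $J_f$: this is the substantive case. For $\mathcal{H}^{d-1}$-a.e.\ $x\in J_f$, part (ii) supplies a unit vector $v$ such that $f\to f_{sup}(x)$ in $L^{d/(d-1)}$-average over the half-balls $B_2(x,r)\cap H_v^-$. I would then invoke Lemma \ref{conl3} (applied to this half-ball, which has the form of the lemma with $-v$) to place inside each $B_2(x,r)\cap H_v^-$ a $\mu$-ball $B_\mu(c,\rho)$ with $x$ on its boundary and $|B_\mu(c,\rho)|\ge k_\mu|B_2(x,r)|$. The key is that the average of $f$ over this off-center $\mu$-ball still converges to $f_{sup}(x)$: the $L^1$-average of $|f-f_{sup}(x)|$ over the half-ball, normalized by $|B_2(x,r)|$, tends to $0$, and the comparability $|B_\mu(c,\rho)|\ge k_\mu|B_2(x,r)|$ together with $B_\mu(c,\rho)\subset B_2(x,r)\cap H_v^-$ transfers this smallness to the $\mu$-ball. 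As $x\in B_\mu(c,\rho)$, each such ball is admissible, and letting $r\to 0$ gives $M_\mu f(x)\ge f_{sup}(x)$.

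The step I expect to be the main obstacle is this last transfer from the $L^{d/(d-1)}$-average over the euclidean half-ball to the plain $L^1$-average over the comparable $\mu$-ball. Here one combines Hölder's inequality $\int_A|g|\le |A|^{1/d}\bigl(\int_A|g|^{d/(d-1)}\bigr)^{(d-1)/d}$ (with $A=B_2(x,r)\cap H_v^-$ and $g=f-f_{sup}(x)$) with the scaling $|A|\sim|B_2(x,r)|\sim r^d$, checking that the factor $r^{1-d}$ coming from the normalization exactly absorbs the $o(r^{d-1})$ gain from the hypothesis, leaving $o(1)$. Everything else is bookkeeping: confirming that the discarded sets are $\mathcal{H}^{d-1}$-null and that $x$ genuinely belongs to the closed balls used in $M_\mu f$.
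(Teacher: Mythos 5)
Your proposal is correct and follows essentially the same route as the paper: the paper also writes $M_\mu f(x)\ge f_{sup}(x)+\frac{1}{|B|}\int_B(f-f_{sup}(x))$, handles $x\notin J_f$ via part (i) of Theorem \ref{densidadvspromedio} with centered balls, and handles $x\in J_f$ via part (ii) together with Lemma \ref{conl3}, passing from the $L^{d/(d-1)}$ half-ball average to the $L^1$ average over the comparable $\mu$-ball by Jensen's inequality. The Hölder/scaling step you flag as the main obstacle is exactly the Jensen step in the paper and works out as you describe.
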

\begin{proof} For every $r>0$,
  \begin{equation*}
    M_\mu f(x)\ge \frac{1}{|B_\mu(x,r)|}\int_{B_\mu(x,r)}f=
    f_{sup}(x) +
    \frac{1}{|B_\mu(x,r)|}\int_{B_\mu(x,r)}(f(y)-f_{sup}(x))dy,
  \end{equation*}
so it is enough to show that for
  $\mathcal{H}^{d-1}$ almost every $x\in\mathbb{R}^d$,  the right
  most term in the preceding inequality tends to $0$ as
  $r\rightarrow 0^+$. If $x\in\mathbb{R}^d\setminus J_f$, then part
$i)$  of Theorem \ref{densidadvspromedio} yields the result: First,
replace $\mu$-balls by euclidean balls (perhaps giving up some constant) and
then use Jensen's inequality. And if $x\in J_f$, a similar argument, using
part $ii)$  of Theorem \ref{densidadvspromedio} and Lemma \ref{conl3}, yields the same result.
\end{proof}

\begin{lemma}\label{fsuppos}
  Let $f$ be a block decreasing function on $\mathbb{R}^d$, and let
  $x\in (0,\infty)^d$. Then $f_{sup}$ is upper semicontinuous
at $x$.
\end{lemma}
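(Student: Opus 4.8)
The plan is to reduce the assertion to a one-sided continuity statement along the main diagonal, and then to establish that statement by a density argument that exploits the fact that super-level sets of block decreasing functions are downward closed.

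First I would invoke the preceding lemma, which guarantees that $f_{sup}$ is itself block decreasing, hence unconditional and decreasing in each variable on $[0,\infty)^d$. Since $x\in(0,\infty)^d$, every $y$ with $\|y-x\|_\infty<\delta$ (for $\delta<\min_i x_i$) lies in the open positive cone and satisfies $y_i>x_i-\delta$ for all $i$; monotonicity then gives
\[
\sup_{\|y-x\|_\infty<\delta}f_{sup}(y)\le f_{sup}(x-\delta\mathbf 1),
\]
where $\mathbf 1=(1,\dots,1)$. Letting $\delta\downarrow0$ reduces upper semicontinuity at $x$ to the equality $\lim_{\delta\downarrow0}f_{sup}(x-\delta\mathbf 1)=f_{sup}(x)$. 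The inequality ``$\ge$'' is automatic from monotonicity, so the whole problem becomes showing that $f_{sup}$ cannot strictly exceed $f_{sup}(x)$ along the points $x-\delta\mathbf 1$ approaching $x$ from the side of the origin.

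Next I would argue by contradiction. If the diagonal limit strictly exceeds $f_{sup}(x)$, then (using that $\delta\mapsto f_{sup}(x-\delta\mathbf 1)$ decreases to this limit) I can pick $t$ with $f_{sup}(x)<t<f_{sup}(x-\delta\mathbf 1)$ for all small $\delta>0$, and set $S:=\{f>t\}$. Since $f$ is block decreasing, $S\cap(0,\infty)^d$ is downward closed: if $u\in S$ and $0\le w_i\le u_i$ for all $i$, then $f(w)\ge f(u)>t$, so the entire box $\prod_i[0,u_i]$ lies in $S$. The choice of $t$ means, on the one hand, that $S$ has density $0$ at $x$ (because $t>f_{sup}(x)$), and on the other hand that $S$ has strictly positive upper density at each $x-\delta\mathbf 1$ (because $t<f_{sup}(x-\delta\mathbf 1)$). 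In particular $S$ meets every ball centred at $x-\delta\mathbf 1$, so $x-\delta\mathbf 1\in\overline S$ for all small $\delta>0$.

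The heart of the matter, and the step I expect to be the main obstacle, is to convert this accumulation of $S$ along the diagonal into positive density of $S$ at $x$ itself, contradicting density $0$. For each small $\eta>0$ I would choose $u^\eta\in S$ with $\|u^\eta-(x-\eta\mathbf 1)\|_\infty<\eta$; then $u^\eta_i>x_i-2\eta$ for every $i$, and downward closedness yields $\prod_i[0,x_i-2\eta]\subseteq S$. Fixing a small radius $r>0$ and then choosing $\eta=\eta(r)$ with $\eta/r\to0$ (for instance $\eta=r^2$), the box $\prod_i[0,x_i-2\eta]$ covers, inside $B_2(x,r)$, all but an $O(\eta r^{d-1})$ portion of the coordinate orthant $\{w:w_i\le x_i\ \forall i\}$, whose intersection with $B_2(x,r)$ has measure exactly $2^{-d}|B_2(x,r)|$. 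Hence
\[
\frac{|B_2(x,r)\cap S|}{|B_2(x,r)|}\ge 2^{-d}-C_d\,r,
\]
so the lower density of $S$ at $x$ is at least $2^{-d}>0$, the desired contradiction. This forces the diagonal limit to equal $f_{sup}(x)$ and completes the proof. The delicate points to verify carefully are that the orthant centred at $x$ accounts for exactly a $2^{-d}$ fraction of the ball and that the $2\eta$-shrinking of the box costs only a lower-order term, so that this fixed fraction survives in the limit $r\to0$.
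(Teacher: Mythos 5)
Your proof is correct and is essentially the argument the paper gives: assuming upper semicontinuity fails at $x$, one shows that a super-level set $\{f>t\}$ with $t>f_{sup}(x)$ contains the whole box $\prod_{i}(0,x_i)$ (up to negligible sets), whose density at $x$ is $2^{-d}$, contradicting the definition of $f_{sup}(x)$. The paper's version is just more compressed --- it passes directly from $f_{sup}>f_{sup}(x)+\varepsilon$ on the box to the density statement --- whereas you fill in that step via the downward closedness of $\{f>t\}$ and accumulation along the diagonal; note the $\eta=r^2$ bookkeeping is unnecessary, since $\prod_i[0,x_i-2\eta]\subseteq S$ for every small $\eta>0$ already yields $\prod_i[0,x_i)\subseteq S$ and hence the fixed $2^{-d}$ lower density at once.
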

\begin{proof}
Suppose otherwise. Since $f_{sup}$ is block decreasing,  there
is an $\varepsilon>0$
such that for all $z$ in the rectangle $
  \Pi_{i=1}^d(0,x_i)$, $f_{sup}(z)>f_{sup}(x)+\varepsilon$.
  Then the density at $x$ of the set
  $\{f>f_{sup}(x)+\varepsilon\}$ is at least $1/2^d$,
  contradicting the definition of $f_{sup}$.
\end{proof}

\begin{remark}\label{corollary}
Let $f$ be  block decreasing and let $x\in (0,\infty)^d$ be
such that $f(x)=f_{sup}(x)$. Arguing as in Lemma
\ref{fsuppos}, we conclude that $f$ is upper semicontinuous at $x$.
Thus, a block decreasing
function is upper semicontinuous at almost every point in $\mathbb{R}^d$.
\end{remark}

\begin{remark}
  \label{fsupupcontae}
 By Lemma
\ref{fsuppos}, if $f$ is block decreasing and
 $f_{sup}$ is not upper semicontinuous  at $x$, then
  at least one of $x$'s coordinates must be zero.
We consider these points  in the next Lemma.
\end{remark}

\begin{lemma}\label{lem0}
  Let $f$ be a block decreasing function on $\mathbb{R}^d$. Then
  for almost all  $(x_1,\ldots,x_{d-1})\in\mathbb{R}^{d-1}$, $f_{sup}$ is upper
  semicontinuous  at $(x_1,\ldots,x_{d-1},0)$.
\end{lemma}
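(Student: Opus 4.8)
The plan is to reduce the $d$-dimensional statement to the already-established fact that a block decreasing function is upper semicontinuous at almost every point, but now applied in dimension $d-1$ on the face $\{x_d=0\}$. Throughout I would use that $f_{sup}$ is itself block decreasing (the preceding lemma), hence unconditional and, in particular, non-increasing in $|x_d|$; it is precisely this monotonicity in the last variable that neutralizes the approach directions transverse to the face.

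First I would set $p:=(\hat{x}_d,0)$ and define $g:\mathbb{R}^{d-1}\to[0,\infty]$ by $g(\hat{x}_d):=f_{sup}(\hat{x}_d,0)$, the restriction of $f_{sup}$ to the hyperplane $\{x_d=0\}$. Since $f_{sup}$ is even in each variable and decreasing in each $|x_i|$ on the positive cone, so is $g$; thus $g$ is block decreasing on $\mathbb{R}^{d-1}$. The key pointwise observation is that for \emph{every} $z=(\hat{z}_d,z_d)$, unconditionality and monotonicity in the last variable give $f_{sup}(\hat{z}_d,z_d)=f_{sup}(|z_1|,\dots,|z_{d-1}|,|z_d|)\le f_{sup}(|z_1|,\dots,|z_{d-1}|,0)=g(\hat{z}_d)$. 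Because this bound holds irrespective of the sign of $z_d$, it controls all approach directions at once, and since $\hat{z}_d\to\hat{x}_d$ as $z\to p$, we obtain
\[ \limsup_{z\to p} f_{sup}(z)\ \le\ \limsup_{\hat{z}_d\to \hat{x}_d} g(\hat{z}_d). \]

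It then remains to bound the right-hand side by $g(\hat{x}_d)=f_{sup}(\hat{x}_d,0)=f_{sup}(p)$, which is exactly upper semicontinuity of $g$ at $\hat{x}_d$. Here I would invoke Remark \ref{corollary} applied to the block decreasing function $g$ in $\mathbb{R}^{d-1}$: its argument rests only on $g$ agreeing a.e.\ with its own approximate limsup together with the upper semicontinuity argument of Lemma \ref{fsuppos}, both valid in any dimension. Hence $g$ is upper semicontinuous at almost every $\hat{x}_d\in\mathbb{R}^{d-1}$, and at each such point $\limsup_{z\to p}f_{sup}(z)\le g(\hat{x}_d)=f_{sup}(p)$, so $f_{sup}$ is upper semicontinuous at $(\hat{x}_d,0)$. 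Since this holds for almost all $\hat{x}_d=(x_1,\dots,x_{d-1})$, the lemma follows; the base case $d=2$ is immediate, as then $g$ is an even, unimodal function of one variable, continuous off a countable set.

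The one point requiring care is the last step: $g$ is produced merely as a restriction, and it need \emph{not} be presented as the approximate limsup of any function living on the hyperplane, so its a.e.\ upper semicontinuity must be deduced from its intrinsic block-decreasing structure (equivalently, from $g$ coinciding a.e.\ with its own approximate limsup and the argument of Lemma \ref{fsuppos}) rather than by naively differentiating $f_{sup}$ along the face. Everything else is a direct consequence of the monotonicity of $f_{sup}$ in $|x_d|$.
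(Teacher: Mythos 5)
Your proof is correct and follows essentially the same route as the paper: define $g(\hat{y}_d):=f_{sup}(\hat{y}_d,0)$, use the monotonicity of $f_{sup}$ in $|x_d|$ to bound $\limsup_{y\to(\hat{x}_d,0)}f_{sup}(y)$ by $\limsup_{\hat{y}_d\to\hat{x}_d}g(\hat{y}_d)$, and invoke Remark \ref{corollary} for the a.e.\ upper semicontinuity of the block decreasing function $g$ on $\mathbb{R}^{d-1}$. Your added caution about applying the remark to $g$ intrinsically (rather than as a trace of $f_{sup}$) is a fair point the paper leaves implicit, but it does not change the argument.
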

\begin{proof}
Writing $\hat{x}_d=(x_1,\ldots,x_{d-1})$,
  $\hat{y}_d=(y_1,\ldots,y_{d-1})$, and
  $g(\hat{y}_d):=f_{sup}(\hat{y}_d,0)$, we note,
first, that by Remark \ref{corollary}, $g$ is upper semicontinuous for a.e.
  $\hat{x}_d\in\mathbb{R}^{d-1}$. Thus, it is enough to check that if $g$ is
  upper semicontinuous at $\hat{x}_d$, then $f_{sup}$ is upper
  semicontinuous at $(\hat{x}_d,0)$. But this follows from the
  block decreasing property of $f_{sup}$:
$\limsup_{y\to (\hat{x}_d,0)} f_{sup}(y) \le \limsup_{y\to (\hat{x}_d,0)}
f_{sup} (\hat{y}_d,0) = \limsup_{\hat{y}_d\to \hat{x}_d}
g(\hat{y}_d) \le g(\hat{x}_d) = f_{sup}(\hat{x}_d,0)$.
\end{proof}

{\em Proof of Theorem \ref{t2}.} According to
Lemma 3.4. of \cite{AlPe}, if a locally integrable function $h\ge0$
is upper semicontinuous at $w$ and $h(w) \le M h(w)$, then $Mh$ is
continuous at $w$. Now
 Lemmas \ref{fsuppos} and \ref{lem0} entail  that
 $f_{sup}$
 is upper semicontinuous at $\mathcal{H}^{d-1}$ almost every point,
 while by Lemma \ref{conl2},  $f_{sup}(w) \le M_\mu f_{sup} (w)$
 for
$\mathcal{H}^{d-1}$ a.e. $w$. Since $M_\mu f = M_\mu f_{sup}$,  the result follows.
\qed

\begin{remark}
  Note that the maximal function of a block decreasing function
  need not be continuous. Consider, for instance,
  the maximal function $M_\infty$, defined using cubes,
and let $f$ be the  characteristic function of the unit $\ell^p$-quasiball
in $\mathbb{R}^2$,
where $0<p\le1$ is fixed. Then $M_\infty f$ is discontinuous at $(1,0)$.
\end{remark}

\section{The derivative of the maximal function.}

Let us recall a few facts about the distributional derivative
of a function of finite variation (cf. \cite[p.184--186]{AFP}). The measure $Df$ vanishes on $\mathcal{H}^{d-1}$-negligible sets. Its
absolutely continuous part  $D^af$ is obtained by
integrating the density of
$Df$ with respect to the $d$-dimensional Lebesgue measure, and lives
in the set $\mathcal{D}_f$ where $f$ is approximately differentiable
(cf. \cite[Definition 3.70, p. 165]{AFP} for the definition of
approximate differentiability). The singular part of $Df$ can
be decomposed into a Cantor part and a jump part. The Cantor part $D^cf$
 gives
full measure
to the set where $f$ is
approximately continuous, and the jump part $D^jf$ gives
full measure
to the jump set of $f$ (a
countably $\mathcal{H}^{d-1}$-rectifiable set). The measure
 $D^af+D^cf$
 vanishes on sets of finite (and thus of $\sigma$-finite) $\mathcal{H}^{d-1}$
measure.

Let $f\in L^1_{loc}(\mathbb{R}^d)$.
From now on, we  assume that $|f|=|f|^*$, the precise
representative of $|f|$. In what follows, Lipschitz constants are determined by
using the $\ell_2$-norm.

We define
\begin{equation}\label{enk}
  E_{n,k}:=\{x\in\mathbb{R}^n:  \textnormal{ there exists a ball } B:= B_\mu(c,r) \textnormal{ with  } x\in B ,  r\ge 1/n,
  \end{equation}
  \begin{equation*}
  \frac{1}{|B|}\int_{B} |f(y)|dy =  M_{\mu}f(x), \textnormal{ and } M_{\mu}f(x) \le  k\}.
\end{equation*}

\begin{lemma}\label{bolasgrandes} Let $f$ be a locally integrable
function, and let $c_\mu > 0$ be such that
for all $w\in \mathbb{R}^d$,
$\|w\|_\mu \le c_\mu \|w\|_2$.
  Then the restriction of $M_\mu f$  to $E_{n,k}$
  is Lipschitz, with the Lipschitz constant $\operatorname{Lip}(M_{\mu} f) \le c_\mu dkn$.
  \end{lemma}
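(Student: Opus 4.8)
The plan is to verify the Lipschitz estimate directly from the definitions: I will show that for any two points $x, x' \in E_{n,k}$ one has $|M_\mu f(x) - M_\mu f(x')| \le c_\mu d k n \|x - x'\|_2$, the Lipschitz constant being measured in the $\ell_2$-norm as stipulated above. By symmetry I may assume $M_\mu f(x) \ge M_\mu f(x')$, so it suffices to bound the difference $M_\mu f(x) - M_\mu f(x')$ from above.

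First I would use membership $x \in E_{n,k}$ to fix a ball $B = B_\mu(c,r)$ witnessing it: $x \in B$, $r \ge 1/n$, and $\frac{1}{|B|}\int_B |f(y)|dy = M_\mu f(x) \le k$. The key geometric step is to \emph{enlarge} $B$ so that it also contains $x'$. Setting $\rho := \mu(x' - x)$, the triangle inequality gives $\mu(x' - c) \le \mu(x' - x) + \mu(x - c) \le \rho + r$, so $x' \in B'' := B_\mu(c, r + \rho)$, and clearly $B \subset B''$. Since $x' \in B''$, the definition of the maximal function together with monotonicity of the integral yields
\[
M_\mu f(x') \ge \frac{1}{|B''|}\int_{B''} |f(y)|dy \ge \frac{1}{|B''|}\int_{B} |f(y)|dy = \frac{|B|}{|B''|}\, M_\mu f(x).
\]

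Next I would estimate the ratio of measures. By the scaling property of Lebesgue measure, $|B''|/|B| = ((r+\rho)/r)^d$, hence
\[
M_\mu f(x) - M_\mu f(x') \le M_\mu f(x)\left(1 - \frac{|B|}{|B''|}\right) = M_\mu f(x)\, \frac{(r+\rho)^d - r^d}{(r+\rho)^d}.
\]
Using the elementary bound $(r+\rho)^d - r^d \le d(r+\rho)^{d-1}\rho$, the last factor is at most $d\rho/(r+\rho) \le d\rho/r$. Combining this with $M_\mu f(x) \le k$, with $r \ge 1/n$ (so $1/r \le n$), and with $\rho = \mu(x'-x) \le c_\mu \|x' - x\|_2$, I obtain
\[
M_\mu f(x) - M_\mu f(x') \le k \cdot \frac{d\rho}{r} \le k d n \rho \le c_\mu d k n \|x' - x\|_2,
\]
which is exactly the claimed Lipschitz estimate.

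The argument is elementary, and there is no serious obstacle, precisely because the definition of $E_{n,k}$ already guarantees that the supremum defining $M_\mu f(x)$ is \emph{attained} by an admissible ball of radius at least $1/n$; this is what lets me start from an exact equality rather than an approximating sequence. The only points requiring care are: (i) enlarging rather than translating the ball, so that $\int_{B''}|f| \ge \int_B |f|$ comes for free with no control on the behaviour of $f$; and (ii) the conversion between the $\mu$-norm and the $\ell_2$-norm via $c_\mu$, which accounts for that constant in the final bound. The lower bound $r \ge 1/n$ is what keeps the constant finite, and is the reason the lemma is stated on the sets $E_{n,k}$ rather than globally.
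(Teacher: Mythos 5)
Your proof is correct and follows essentially the same route as the paper's: fix the ball attaining the supremum at $x$, enlarge its radius by $\mu(x'-x)$ so it contains $x'$, and estimate the resulting ratio of measures using $r\ge 1/n$ and $M_\mu f(x)\le k$. The only cosmetic difference is in the last step, where you bound $(r+\rho)^d-r^d\le d(r+\rho)^{d-1}\rho$ directly, whereas the paper invokes the monotonicity and limit of $a\mapsto a^{-1}\bigl(1-(1+na)^{-d}\bigr)$; both give the same constant $c_\mu dkn$.
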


\begin{proof}
  Let $x,y\in E_{n,k}$. By symmetry, we may assume
  that $M_{\mu} f(x)\ge M_{\mu} f(y)$.
     Suppose  $M_{\mu} f(x) =   \frac{1}{|B|}\int_B |f|$, where $x\in B:= B_\mu (c,r)$.
Now since $x\in B_\mu(c,r)$, $y\in B_\mu(c,r+\|x-y\|_\mu)$. Thus,
  \begin{equation*}
    \frac{|M_{\mu}f(x)-M_\mu f(y)|}{\|x-y\|_2}= \frac{M_{\mu}f(x)-M_\mu f(y)}{\|x-y\|_2}
  \end{equation*}
  \begin{equation*}
   \le  \frac{M_{\mu}f(x)-\frac{| B_\mu (c,r)|}{|B_\mu(c,r+\|x-y\|_\mu)|} \frac{1}{| B_\mu (c,r)|}\int_{B_\mu (c,r)} |f|}{\|x-y\|_2}
   \end{equation*}
  \begin{equation*}
  \le \frac{M_{\mu}f(x)}{\|x-y\|_2}\left(1-\frac{|B_\mu(c,1)|r^d}{|B_\mu(c,1)|(r+\|x-y\|_\mu)^d}\right)
  \end{equation*}
  \begin{equation*}
   \le   \frac{c_\mu k}{\|x-y\|_\mu}\left(1-\frac{1}{\left(1+\frac{\|x-y\|_\mu}{r}\right)^d}\right)
   \le
   \frac{c_\mu k}{\|x-y\|_\mu}\left(1-\frac{1}{ (1+n\|x-y\|_\mu)^d}\right).
  \end{equation*}
Now $g(a) := a^{-1}\left(1- (1+na)^{-d}\right)$ is decreasing on $\{a>0\}$, and $\lim_{a\to 0^+}g(a)=d n$, so   from the preceding inequality we obtain
  \begin{equation*}
    \frac{|M_{\mu}f(x)-M_\mu f(y)|}{\|x-y\|_2}\le  c_\mu k d n.
  \end{equation*}
  \end{proof}

Next we deal with the local maximal function $M_{R,\mu}$.
Set
\begin{equation}\label{ern}
  E_{R, n}:=\{x\in\mathbb{R}^n:  \textnormal{ there exists a ball } B:= B_\mu(c,r) \textnormal{ with  } x\in B , r\in [1/n, R],
  \end{equation}
\begin{equation*}
 \textnormal{ and }
  \frac{1}{|B|}\int_{B} |f(y)|dy =  M_{R,\mu}f(x)\}.
\end{equation*}
Of course, if $R <  1/n$, then $E_{R, n}$ is empty.

\begin{lemma}\label{bddballs} Fix $R > 0$, and let $c_\mu := 1/|B_\mu (0,1)|$.  If $f$ is locally integrable and
has finite variation, then  the restriction of $M_{R,\mu} f$ to $E_{R,n}$
  is Lipschitz, with the Lipschitz constant $\operatorname{Lip}(M_{R,\mu} f) \le c_\mu n^d V(f, \mathbb{R}^d)$.
    \end{lemma}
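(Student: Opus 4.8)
The plan is to follow the same scheme as in the proof of Lemma \ref{bolasgrandes}, but to replace the \emph{enlargement} of the optimal ball (which is not permitted in the local setting, since it could force the radius past $R$) by a \emph{translation} of it; the price to pay is that we must now use the finiteness of $V(f,\mathbb{R}^d)$.

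First I would fix $x,y\in E_{R,n}$ and assume, by symmetry, that $M_{R,\mu}f(x)\ge M_{R,\mu}f(y)$. By the definition of $E_{R,n}$ there is a ball $B:=B_\mu(c,r)$ with $x\in B$, $r\in[1/n,R]$, and $M_{R,\mu}f(x)=|B|^{-1}\int_B|f|$. Writing $v:=y-x$, I would then consider the translated ball $B':=B_\mu(c+v,r)$. Since $\mu(y-(c+v))=\mu(x-c)\le r$, we have $y\in B'$, and as $B'$ has the same admissible radius $r\le R$ as $B$, it competes in the supremum defining $M_{R,\mu}f(y)$. Hence, by translation invariance of Lebesgue measure, $M_{R,\mu}f(y)\ge|B'|^{-1}\int_{B'}|f|=|B|^{-1}\int_{B'}|f|$.

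Subtracting and performing the change of variables $w=u+v$ in the integral over $B'$, I obtain
\[
  M_{R,\mu}f(x)-M_{R,\mu}f(y)\le\frac{1}{|B|}\int_B\bigl(|f(u)|-|f(u+v)|\bigr)\,du\le\frac{1}{|B|}\int_{\mathbb{R}^d}\bigl||f(u)|-|f(u+v)|\bigr|\,du.
\]
The crux of the argument is to bound the last integral by the variation. For this I would invoke the standard translation estimate for functions of finite variation, namely $\int_{\mathbb{R}^d}|g(u+v)-g(u)|\,du\le\|v\|_2\,V(g,\mathbb{R}^d)$ (which follows, after mollification, from $g(u+v)-g(u)=\int_0^1\nabla g(u+tv)\cdot v\,dt$ in the smooth case), applied to $g=|f|$, together with the inequality $V(|f|,\mathbb{R}^d)\le V(f,\mathbb{R}^d)$ (composing with the $1$-Lipschitz map $t\mapsto|t|$ cannot increase the variation). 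This bounds the last integral by $\|x-y\|_2\,V(f,\mathbb{R}^d)$.

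Finally, the volume of $B$ is controlled from below through its radius: $|B|=r^d|B_\mu(0,1)|\ge n^{-d}|B_\mu(0,1)|$, so $|B|^{-1}\le c_\mu n^d$ with $c_\mu=1/|B_\mu(0,1)|$. Combining the displays yields $M_{R,\mu}f(x)-M_{R,\mu}f(y)\le c_\mu n^d\,V(f,\mathbb{R}^d)\,\|x-y\|_2$, which is the claimed Lipschitz bound. The only delicate point is the translation estimate (and the accompanying inequality $V(|f|)\le V(f)$); both are standard facts about $BV$ functions, and it is precisely the finiteness of $V(f,\mathbb{R}^d)$ that rescues the local argument where the enlargement trick of Lemma \ref{bolasgrandes} is unavailable.
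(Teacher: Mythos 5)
Your proof is correct and follows essentially the same route as the paper: translate the optimal ball $B_\mu(c,r)$ by $y-x$ to obtain a competitor for $M_{R,\mu}f(y)$, bound the resulting difference of averages by the $L^1$ translation estimate for functions of finite variation, and control $|B|^{-1}\le c_\mu n^d$ via $r\ge 1/n$. The only cosmetic differences are that the paper cites the bounded-set form of the translation estimate from \cite[Exercise 3.3, p.~208]{AFP} and applies it directly to $f$ (using $\bigl||a|-|b|\bigr|\le|a-b|$) rather than passing to $V(|f|,\mathbb{R}^d)\le V(f,\mathbb{R}^d)$.
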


\begin{proof} As noted above,  we may  assume that
  $R \ge 1/n$. Otherwise $E_{R, n} = \emptyset$ and there is
  nothing to prove. So let $x,y\in E_{R, n}$, and
  suppose
  that $M_{\mu} f(x) =   \frac{1}{|B|}\int_B |f| \ge M_{\mu} f(y)$,  where $x \in B:= B_\mu (c,r)$ and  $1/n\le r \le R$.
 It follows from \cite[Exercise 3.3, p. 208]{AFP}
  that for every bounded measurable set $K$,
  \begin{equation*}
    \int_{K} \frac{|f(x + h) - f(x)|}{\|h\|_2} dx
    \le  V(f, \mathbb{R}^d).
  \end{equation*}
  Thus,
    \begin{equation*}
    \frac{|M_{R, \mu}f(x)-M_{R, \mu} f(y)|}{\|x-y\|_2}
    \le
       \frac{\frac{1}{|B_\mu (c,r)|}\int_{B_\mu (c,r)} |f| -\frac{1}{|B_\mu(c + y - x,r)|}\int_{B_\mu(c + y - x,r)} |f|}{\|x-y\|_2}
 \end{equation*}
  \begin{equation*}
 \le
\frac{1}{|B_\mu (c,r)|} \int_{B_\mu (c,r)} \frac{|f (u ) - f(u - x + y)| }{\|x-y\|_2} du \le c_\mu n^d  V(f, \mathbb{R}^d).
   \end{equation*}
  \end{proof}

Since the exact size of the Lipschitz constants is irrelevant in
the argument that follows,  from now on we only consider  $M_\mu$.

The following Lemma appears in \cite[p.75]{EvGa}.
We mention that in \cite{EvGa}, the same notation is used
for Hausdorff measures
and the outer measures they generate; in particular, the result below
applies to arbitrary sets $E$.

  \begin{lemma}\label{lemato}
    Let $f:\mathbb{R}^d\rightarrow \mathbb{R}$ be a Lipschitz function
with Lipschitz constant $\operatorname{Lip}(f)$ and let $s > 0$.
     Then, for all $E \subset\mathbb{R}^d$ we have
     $\mathcal{H}^s (f(E))\le \operatorname{Lip}(f)^s \mathcal{H}^{s}(E)$.
  \end{lemma}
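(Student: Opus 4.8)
The plan is to argue directly from the definition of Hausdorff measure through coverings, exploiting the single structural feature of Lipschitz maps that matters here: they cannot expand diameters by more than the factor $\operatorname{Lip}(f)$. Recall that for each $\delta>0$ the size-$\delta$ approximation $\mathcal{H}^s_\delta(E)$ is, up to a fixed normalizing constant depending only on $s$, the infimum of $\sum_j (\operatorname{diam} C_j)^s$ taken over all countable covers $\{C_j\}$ of $E$ by sets with $\operatorname{diam} C_j\le\delta$, and that $\mathcal{H}^s(E)=\lim_{\delta\to 0^+}\mathcal{H}^s_\delta(E)$.

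First I would record the elementary bound $\operatorname{diam} f(C)\le \operatorname{Lip}(f)\,\operatorname{diam} C$, valid for every set $C\subset\mathbb{R}^d$, which is immediate from $|f(a)-f(b)|\le \operatorname{Lip}(f)\,|a-b|$. Then, given any admissible cover $\{C_j\}$ of $E$ at scale $\delta$, the images $\{f(C_j)\}$ cover $f(E)$ and each satisfies $\operatorname{diam} f(C_j)\le \operatorname{Lip}(f)\,\delta$; hence they constitute an admissible cover of $f(E)$ at scale $\operatorname{Lip}(f)\,\delta$. Summing the corresponding diameters raised to the power $s$ gives
\[
\sum_j \bigl(\operatorname{diam} f(C_j)\bigr)^s \le \operatorname{Lip}(f)^s \sum_j \bigl(\operatorname{diam} C_j\bigr)^s,
\]
and taking the infimum over all covers $\{C_j\}$ of $E$ at scale $\delta$ yields $\mathcal{H}^s_{\operatorname{Lip}(f)\,\delta}(f(E))\le \operatorname{Lip}(f)^s\,\mathcal{H}^s_\delta(E)$.

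Finally I would let $\delta\to 0^+$. Since $\operatorname{Lip}(f)\,\delta\to 0$ as well, the left-hand side converges to $\mathcal{H}^s(f(E))$ and the right-hand side to $\operatorname{Lip}(f)^s\,\mathcal{H}^s(E)$, which is exactly the claimed inequality.

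There is no substantial obstacle here; the statement is essentially a reformulation of the definition, and the whole content lies in the diameter estimate above. The only points deserving a word of care are that the normalizing constant in the definition of $\mathcal{H}^s$ appears identically on both sides and therefore cancels, so it does not affect the scaling, and the degenerate case $\operatorname{Lip}(f)=0$, in which $f$ is constant, $f(E)$ is a single point, and for $s>0$ both sides vanish.
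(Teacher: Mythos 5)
Your argument is correct and is precisely the standard covering proof: the paper itself gives no proof of this lemma, simply citing \cite[p.~75]{EvGa}, and the proof there is the same diameter estimate $\operatorname{diam} f(C)\le \operatorname{Lip}(f)\operatorname{diam} C$ applied to $\delta$-covers, followed by the limit $\delta\to 0^+$. Nothing further is needed.
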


Define $E:=\bigcup_{n,k\in\mathbb{N}}E_{n,k}$, and note that
$\mathbb{R}^d-E\subset \{M_{\mu}f=f^*\}$. Now it is to be expected
 that $DM_\mu f$
has no Cantor part on  $E$, since
  $M_\mu f$ Lipschitz on the sets $E_{n,k}$, and likewise, that
$DM_\mu f$ has no Cantor part on  $\{M_{\mu}f=f^*\}$, since
by hypothesis $f^*$ is of $SBV$, so $D^c f \equiv 0$. We prove
that this is indeed the case, by restricting
functions to
lines.

 \begin{lemma}\label{lemmatres}
  If $f\in L^1_{loc}(\mathbb{R}^d)$  has finite variation, then $M_\mu f$ maps $\mathcal{H}^{1}$-negligible subsets of $E$ into $\mathcal{H}^{1}$-negligible sets.
\end{lemma}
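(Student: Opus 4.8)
The plan is to exploit the decomposition $E = \bigcup_{n,k\in\mathbb{N}} E_{n,k}$ together with the Lipschitz bound of Lemma \ref{bolasgrandes} and the measure-contraction estimate of Lemma \ref{lemato}. Fixing an $\mathcal{H}^1$-negligible set $A \subset E$, I would write $A = \bigcup_{n,k} (A \cap E_{n,k})$, observe that each piece $A \cap E_{n,k}$ is again $\mathcal{H}^1$-negligible, and use that the image of a union is the union of the images to get $M_\mu f(A) = \bigcup_{n,k} M_\mu f(A \cap E_{n,k})$. Since $\mathcal{H}^1$ is countably subadditive (and, as noted before Lemma \ref{lemato}, defined on arbitrary sets, so no measurability of $A$ is required), the whole statement reduces to proving $\mathcal{H}^1(M_\mu f(A \cap E_{n,k})) = 0$ for a single fixed pair $(n,k)$.

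For each such pair, Lemma \ref{bolasgrandes} guarantees that $M_\mu f$ restricted to $E_{n,k}$ is Lipschitz, with constant at most $c_\mu d k n$. The one point deserving a word of justification is that Lemma \ref{lemato} is phrased for a map defined and Lipschitz on all of $\mathbb{R}^d$, whereas here we control $M_\mu f$ only on $E_{n,k}$. I would dispose of this by McShane's extension theorem: a real-valued function Lipschitz on a subset of $\mathbb{R}^d$ extends to a function $g_{n,k}\colon\mathbb{R}^d\to\mathbb{R}$ with the same Lipschitz constant. Because $g_{n,k}$ agrees with $M_\mu f$ on $E_{n,k}$, one has $g_{n,k}(A \cap E_{n,k}) = M_\mu f(A \cap E_{n,k})$, and applying Lemma \ref{lemato} with $s = 1$ to the globally Lipschitz $g_{n,k}$ yields
\[
\mathcal{H}^1\bigl(M_\mu f(A \cap E_{n,k})\bigr) = \mathcal{H}^1\bigl(g_{n,k}(A \cap E_{n,k})\bigr) \le c_\mu d k n \,\mathcal{H}^1(A \cap E_{n,k}) = 0,
\]
since $\mathcal{H}^1(A \cap E_{n,k}) = 0$.

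Summing over the countably many pairs $(n,k)$ via subadditivity then gives $\mathcal{H}^1(M_\mu f(A)) = 0$, which is what is claimed. I do not anticipate a genuine obstacle here: once the sets $E_{n,k}$ are isolated and the Lipschitz estimate of Lemma \ref{bolasgrandes} is available, the proof is essentially bookkeeping. The only step needing care is the passage from the Lipschitz bound on the subset $E_{n,k}$ to one usable through Lemma \ref{lemato}; invoking McShane's theorem keeps us squarely within the stated results, though one could alternatively note that the standard proof of Lemma \ref{lemato} uses the Lipschitz property of the map only on the set whose image is being measured.
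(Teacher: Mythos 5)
Your proposal is correct and follows essentially the same route as the paper: decompose $E$ into the sets $E_{n,k}$, use the Lipschitz bound of Lemma \ref{bolasgrandes} together with a Lipschitz extension theorem (the paper cites Kirszbraun, you cite McShane -- either works here since the constant is irrelevant) to apply Lemma \ref{lemato} with $s=1$, and conclude by countable subadditivity. No gaps.
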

\begin{proof}  Fix $k$ and $n$, and let $N\subset \mathbb{R}^d$ be an $\mathcal{H}^{1}$-null subset of $\mathbb{R}^d$.
  By
  the previous lemma, $|M_\mu f(N \cap E_{n,k})| = 0$;
  here the absolute value signs stand for the 1-dimensional Lebesgue
  measure, which on the real line coincides with $\mathcal{H}^{1}$; while the preceding lemma refers to Lipschitz functions
  defined on all $\mathbb{R}^d$, one can always extend a
  Lipschitz function from a subset to the whole space $\mathbb{R}^d$,
  with the same constant by Kirszbraun's theorem,
  or, if one is not concerned about the
  constant, as is our case, by simpler extension theorems.
  Since a countable union of null sets is
  null, the result follows.
  \end{proof}

We shall use a variant of the Banach Zarecki Theorem (which states that a
real valued continuous function on a compact interval is
absolutely continuous
if and only if it is  of bounded variation and maps null sets to null sets). As stated, the result fails for $\mathbb{R}$ even if $f$ is bounded; for
instance, the function $\sin x$ is absolutely continuous and has
infinite variation. However, under the additional assumption that $f\ge 0$ is  block decreasing, the variation is bounded by $2 f(0)$,
so the following version of the Banach Zarecki Theorem does
hold
for $\mathbb{R}$.

\begin{lemma}\label{BVAC}
 Let $f:\mathbb{R} \to [0, \infty)$ be a continuous, block decreasing function. Then $f$  is absolutely continuous if and only if
 $f$  maps measure zero sets
to measure zero sets.
\end{lemma}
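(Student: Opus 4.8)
The plan is to reduce the statement to the classical Banach--Zarecki theorem on compact intervals, after first observing that the block decreasing hypothesis supplies bounded variation for free. Indeed, since $f$ is even and nonincreasing on $[0,\infty)$, its variation on any interval $[-n,n]$ equals $2(f(0)-f(n))\le 2f(0)$, so $f$ is of bounded variation on every compact interval, with a bound uniform in $n$. This is precisely the ingredient whose absence makes the naive converse fail for functions such as $\sin x$; here it comes automatically from the shape of $f$. Throughout I interpret ``absolutely continuous'' on $\mathbb{R}$ as absolute continuity on each compact interval.

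For the direction assuming $f$ is absolutely continuous, I would argue locally. On each $[-n,n]$ the function is continuous and absolutely continuous, so by the classical Banach--Zarecki theorem (the ``only if'' direction of the version cited above) it maps measure zero subsets of $[-n,n]$ to measure zero sets. Given an arbitrary null set $N\subset\mathbb{R}$, write $N=\bigcup_{n} (N\cap[-n,n])$; then $f(N)=\bigcup_{n} f(N\cap[-n,n])$ is a countable union of null sets, hence null. Note that this direction does not even use the block decreasing hypothesis, only the local absolute continuity.

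For the converse, assume $f$ maps measure zero sets to measure zero sets, fix $n$, and restrict attention to $[-n,n]$. On this interval $f$ is continuous by hypothesis, of bounded variation by the opening observation, and it sends null subsets of $[-n,n]$ (which are null in $\mathbb{R}$) to null sets. The classical Banach--Zarecki theorem then yields that $f$ is absolutely continuous on $[-n,n]$. Since $n$ is arbitrary, $f$ is absolutely continuous on $\mathbb{R}$, as desired.

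I do not expect a serious obstacle: the entire content is the remark that block decreasing forces bounded variation on every compact interval, which localizes the problem so that the compact-interval Banach--Zarecki theorem applies verbatim, and the block decreasing hypothesis enters only through this point. The only steps requiring a little care are fixing the convention that absolute continuity on $\mathbb{R}$ means absolute continuity on each compact subinterval, and checking that the ``maps null sets to null sets'' property transfers correctly between $\mathbb{R}$ and its compact subintervals in both directions; both are routine, since null sets restrict to null sets and images over a countable cover reassemble by countable subadditivity.
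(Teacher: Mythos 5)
Your proposal is correct and follows exactly the route the paper intends: the paper states the lemma without a formal proof, justifying it only by the remark in the preceding paragraph that the block decreasing hypothesis bounds the variation by $2f(0)$, so the classical compact-interval Banach--Zarecki theorem applies. Your write-up simply makes explicit the localization to $[-n,n]$ and the reassembly of null sets by countable subadditivity, which is the same argument spelled out in full.
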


We use the following notation to express the  decomposition of a function $h$ on
$\mathbb{R}^d$,
 into functions $h_j(x'; t)$ defined on lines. For every $j=1,\ldots,d$ and every $x'=(x'_1,\ldots,x'_{d-1})\in
  \mathbb{R}^{d-1}$ we  set
  $h_j(x';t):=h(x'_1,\ldots,x'_{j-1},t,x'_j,\ldots,x'_{d-1})$.  This decomposition of $h$ leads to the corresponding disintegration
  result for $Dh$ (cf. \cite{AFP}). Regarding the Cantor part $D^c h$ of
$Dh$, it follows from \cite[Theorem 3.108]{AFP} that
it can be recovered from the Cantor parts of the derivatives of
the restrictions of $h$ to lines, and in particular, $D^c h = 0$
if and only if for almost every line parallel to the $j$-th coordinate axis,
$D^c h_j = 0$, where $j=1,\ldots,d$. To show that on these lines the functions $h_j(x'; t)$ map Lebesgue null sets to
Lebesgue null sets, we modify them so they become continuous, and then apply the preceding lemma.

Recall
  that, in order to simplify notation, we are assuming that
  a function $f\ge 0$ is everywhere equal to its precise representative
$f^*$.

\begin{lemma} \label{fn0to0}
  Let $f: L^1_{loc}(\mathbb{R}^d)\to [0,\infty]$ be a  finite variation,
  block decreasing function with $|D^cf|(\mathbb{R}^d)=0$. Then for $\mathcal{H}^{d-1}$ a.e. $x'\in \mathbb{R}^{d-1}$, and for every $j=1,\ldots,d$, the function $f_j(x';\cdot):\mathbb{R}\to \mathbb{R}$ maps $\mathcal{H}^{1}$-negligible sets into $\mathcal{H}^{1}$-negligible sets.
\end{lemma}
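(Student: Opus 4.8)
The plan is to reduce the $d$-dimensional statement to a one-dimensional property of the restrictions $g := f_j(x';\cdot)$ via the disintegration of $Df$, and then to establish Luzin's property (N) for these one-dimensional functions using the absence of a Cantor part together with the Banach--Zarecki type result, Lemma \ref{BVAC}. Recall that on the line $\mathcal{H}^1$ coincides with Lebesgue measure, so the conclusion is exactly that each $f_j(x';\cdot)$ has property (N).

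First I would fix $j$ (the argument being symmetric in the coordinates) and invoke the disintegration of the Cantor part recorded just above: since $|D^cf|(\mathbb{R}^d)=0$, the cited consequence of \cite[Theorem 3.108]{AFP} gives $D^c f_j(x';\cdot)=0$ for $\mathcal{H}^{d-1}$ almost every $x'$. Discarding a further null set on which the one-dimensional variation is infinite (null by (\ref{equivar})--(\ref{equivar1})) and the single point $x'=0$ (where $g$ could take the value $\infty$), I may assume that $g=f_j(x';\cdot)$ is a finite-valued, one-dimensional function of bounded variation whose derivative $Dg$ has no Cantor part. Moreover $g$ is block decreasing on $\mathbb{R}$: by unconditionality $g(t)=g(|t|)$, and $g$ is non-increasing on $[0,\infty)$ because $f$ is; this holds for every $x'$. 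Intersecting the finitely many full-measure sets obtained as $j$ ranges over $1,\ldots,d$ leaves a set of full $\mathcal{H}^{d-1}$-measure on which all conclusions hold at once.

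The core is the one-dimensional claim: a block decreasing $g$ of bounded variation with $D^cg=0$ maps Lebesgue-null sets to Lebesgue-null sets. Working on $[0,\infty)$, where $g$ is non-increasing (the half-line $(-\infty,0]$ being symmetric), I split the positive total-variation measure $|Dg|$ on $(0,\infty)$ into its absolutely continuous part $\mu^{ac}$ and its atomic (jump) part $\mu^{j}$, there being no Cantor part by hypothesis. The decisive geometric fact is that a monotone function jumps over every value in the open interval determined by a jump, so those values are never attained; consequently, for any interval $I$ one has $|g(I)|=\mu^{ac}(I)$, the jump masses being subtracted as gaps in the range. Given a null set $N$, since $\mu^{ac}\ll\text{Leb}$ we have $\mu^{ac}(N)=0$, so by outer regularity there is an open $U\supseteq N$ with $\mu^{ac}(U)$ arbitrarily small; writing $U$ as a disjoint union of intervals $I_i$ and using $|g(I_i)|=\mu^{ac}(I_i)$ yields $|g(N)|\le|g(U)|\le\sum_i\mu^{ac}(I_i)=\mu^{ac}(U)$, whence $|g(N)|=0$. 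Equivalently, the continuous part $g_c$ of $g$ is absolutely continuous (its derivative measure being $\mu^{ac}$), so by Lemma \ref{BVAC} it enjoys property (N), while the jump part contributes nothing because its range has measure zero; the gap identity $|g(I)|=\mu^{ac}(I)$ is precisely what reconciles these two pieces.

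The step I expect to be the main obstacle is exactly this last reconciliation. Property (N) is not preserved under sums, so one cannot simply add property (N) for $g_c$ to property (N) for the jump part. The honest way around it is the observation that the jumps of a monotone function appear as genuine gaps in its range, so that the Lebesgue measure of the image of any set is controlled solely by $\mu^{ac}$; this is what makes the covering estimate go through and is the crux of the whole lemma. Everything else --- the reduction via disintegration, the verification that $g$ is one-dimensionally block decreasing, and the passage from $[0,\infty)$ to all of $\mathbb{R}$ --- is routine.
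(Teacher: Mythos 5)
Your proof is correct, and the reduction to dimension one (disintegration of the Cantor part via \cite[Theorem 3.108]{AFP} together with the sliced-variation formula) is exactly the paper's; where you genuinely diverge is in the one-dimensional step. The paper does not invoke the gap structure of the range: it lists the jump points $t_n$ of $f_j(x';\cdot)$, selects small disjoint intervals $[t_{n_k}-\delta_{n_k},\,t_{n_k}+\delta_{n_k}]$ whose images have total measure less than $\varepsilon$, replaces the function there by affine interpolations so as to obtain a continuous block decreasing function $h$ whose derivative is purely absolutely continuous, applies the Banach--Zarecki variant (Lemma \ref{BVAC}) to $h$, and absorbs the discarded intervals into an $\varepsilon$ error. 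Your argument instead exploits monotonicity of the restrictions head-on: the jumps of a monotone function appear as gaps in its range, so $|g(I)|\le |D^a g|(I)$ for every interval $I$ (only this inequality is needed, not the equality you assert, though equality does hold here), and outer regularity of $|D^a g|$ applied to an open neighborhood of the null set finishes the proof. Your route is shorter, dispenses with the surgical construction and with Lemma \ref{BVAC} entirely, and gives the quantitative conclusion $|g(A)|\le |D^a g|(A)$ for Borel $A$; its price is that it is genuinely tied to monotonicity, whereas the paper's $\varepsilon/2^n$ modification is a template valid for arbitrary one-dimensional functions of bounded variation with vanishing Cantor part, using only one-sided limits and countability of the jump set. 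Both arguments are complete proofs of the lemma, and you correctly identified the one real danger point, namely that Luzin's property (N) is not additive, which each proof circumvents in its own way.
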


\begin{proof} Fix $\varepsilon > 0$ and $j\in \{1,\dots ,d\}$.
By (\ref{vardirec}),
for a.e. $x'\in\mathbb{R}^{d-1}$,
$f_j(x';\cdot)$ is a function with finite variation, since
$V(f, \mathbb{R}^d) < \infty$, and by
\cite[Theorem 3.108]{AFP},
for a.e. $x'\in\mathbb{R}^{d-1}$,
$f_j(x';\cdot)$ has null Cantor derivative.
Denote by  $A$ the subset of all $x'\in\mathbb{R}^{d-1}$ for which
both of the preceding conditions hold.
Note in particular that $f_j(x';\cdot) < \infty$
on $A$. For each  $x^\prime\in A$, $f_j(x^\prime;\cdot)$ is a non-negative,
real valued function with at most a countable number of jump discontinuities. Next we modify $f_j(x^\prime;\cdot)$ so it
becomes continuous.

Suppose $\{t_n\}_{n=1}^\infty$ is a listing of the set where
$f_j(x^\prime;\cdot)$ has jumps. Since both the right and left limits
$f_j(x^\prime; t_n +)$ and $f_j(x^\prime; t_n -)$ exist, there is
a $\delta_n > 0$ making the images under $f_j(x^\prime; t)$ of the intervals
$[t_n - \delta_n, t_n]$ and $[t_n, t_n + \delta_n]$
 so small
 that $|f_j(x^\prime; [t_n - \delta_n, t_n + \delta_n])| < \varepsilon/2^n$. Thus,
$|\cup_n f_j(x^\prime; [t_n - \delta_n, t_n + \delta_n])| < \varepsilon$.
Actually, we want to select
$\delta_{n}$ satisfying some additional conditions that will make it
possible to obtain
 disjoint intervals.
Using countability we assume that $\delta_n$ is chosen so $t_n - \delta_n$ and $t_n + \delta_{n}$ do not
belong to the jump set of  $f_j(x^\prime;\cdot)$.
We define inductively  the sequence of disjoint intervals as
follows, starting with $n_1 = 1$: Given $[t_1 - \delta_1, t_1 + \delta_1]$, let
$t_{n_2}$ be the first jump point in the list not belonging to
$(t_1 - \delta_1, t_1 + \delta_1)$, if there is any (otherwise
stop here). Then choose $ \delta_{n_2}$, satisfying the conditions
above, so that additionally $[t_1 - \delta_1, t_1 + \delta_1]
\cap [t_{n_2} - \delta_{n_2}, t_{n_2} + \delta_{n_2}] = \emptyset$. Now repeat the process, letting $t_{n_3}$ be the first jump point not
contained in the union of the two preceding intervals, and so on. Once the process
stops, we define $h (t) := f_j (x^\prime; t)$ on $\mathbb{R}\setminus
\cup_{n_k} [t_{n_k} - \delta_{n_k}, t_{n_k} + \delta_{n_k}]$, and on each
$[t_{n_k} - \delta_{n_k}, t_{n_k} + \delta_{n_k}]$, we extend $h(t)$  affinely  from $f_j(x^\prime; t_{n_k} - \delta_{n_k})$
to $f_j(x^\prime; t_{n_k} + \delta_{n_k})$. Now, let
$N\subset \mathbb{R}$ be null. Then $|h(N)|= 0$ by Lemma \ref{BVAC},
so $|f_j (x^\prime; N)| \le |h(N)| + | f_j(x^\prime; \cup_{n_k} [t_{n_k} - \delta_{n_k}, t_{n_k} + \delta_{n_k}])| < \varepsilon$, and
since $\varepsilon$ is arbitrary, we conclude that
$|f_j (x^\prime; N)| = 0$.
\end{proof}

{\em Proof of Theorem \ref{sbv}.} It is enough to show that $M_\mu f$ is absolutely
continuous on lines ($ACL$), that is, given any coordinate axis,
say $x_d$ without loss of generality, $M_\mu f$ is absolutely
continuous on almost all lines parallel to the the $x_d$-axis, where
almost all refers to Lebesgue measure on the intersection of the lines with the
$(d-1)$-subspace perpendicular to them. Sobolev Theory then
entails that $M_\mu f\in W_{loc}^{1,1} (\mathbb{R}^d)$, so the
distributional gradient $\nabla M_\mu f$  exists as a locally integrable (vector valued) function.
Furthermore, since by Theorem \ref{t1}
\begin{equation*}
  V(M_\mu f,\mathbb{R}^d)\le c_{\mu,d} V(f,\mathbb{R}^d),
  \end{equation*}
from the fact that the variation of $f$ is finite, and under the
assumption that $M_\mu f$ is $ACL$, we
obtain
\begin{equation*}
\int_{\mathbb{R}^d} |\nabla M_\mu f| = |D M_\mu f| (\mathbb{R}^d) =  V(M_\mu f,\mathbb{R}^d)\le c_{\mu,d} V(f,\mathbb{R}^d) < \infty,
\end{equation*}
so $\nabla M_\mu f  \in L^1(\mathbb{R}^d)$. Next we show
 that $M_\mu f$ is $ACL$.

Recall that $E:= \cup_{n,k} E_{n,k}$, where $E_{n,k}$ is given by
(\ref{enk}). Consider the set of all lines
 parallel to the last coordinate $x_d$ (of course, any other coordinate
 axis will do equally well). Let $A$ be the set of all
 $y\in \mathbb{R}^{d-1}$
 such  that $f_d (y;\cdot)$ is a function of finite variation
and $M_\mu f(y, t)$ is a continuous function of $t$. By Theorem \ref{t2}
and by (\ref{vardirec}), $|A^c| = 0$;
here $|\cdot|$ stands for the $(d-1)$-dimensional Lebesgue measure.
Given $y\in A$, denote by $L_y$ the vertical line $\{(y,t):t\in\mathbb{R}\}$.
Let  $N_y\subset L_y$ be null with respect to the $1$-dimensional Lebesgue measure, denoted in what follows by $|\cdot|$. By Lemma \ref{fn0to0}, for almost all
$y\in A$, $|f (N_y)| = 0$.
Now  $|M_\mu f (N_y\cap E)|= 0$ by Lemma \ref{lemmatres}, and
$E^c\subset \{M_\mu f = f\}$, so
for almost all $y\in A$,
$|M_\mu f (N_y \cap E^c)| \le |f (N_y)| = 0$.
Thus, $|M_\mu f (N_y)|= 0$ for almost every $y\in \mathbb{R}^{d-1}$.
  \qed

\vskip .2 cm

{\em Proof of Theorem \ref{cubes}.}
As in the previous argument, it is enough to show that $M_\infty f$ is $ACL$. We shall see that the ``no Cantor part of the derivative" hypothesis
is not needed here, since we are using cubes and $f$ is block decreasing.

Let $x\in(0,\infty)^d$ (the argument is the same for the other octants) and suppose, without loss of generality, that  $x_1=\min_{1\le i\le d}x_i$.
 Since $f$ is block decreasing, it is easy to see that
 in order to compute
 $M_\infty f(x)$, it is enough to consider cubes $B_\infty (c, r)$ of sidelength at least
 $x_1$.
 Thus, $[x_1,\infty)^d \subset E_{n,k}$
 for $n \ge 1/ x_1$ and $k \ge M_\infty f(x_1, x_1, \dots, x_1)$
 (the set  $E_{n,k}$ was defined in (\ref{enk})). So $M_\infty f$
 is Lipschitz on every open set compactly contained in $(0,\infty)^d$.
It follows that if $L$ is any line parallel to the $x_d$-coordinate
axis and $N\subset L$ is $1$-null, then $M_\infty f (N\cap (0,\infty)^d)$
is $1$-null. Since the same result holds for the intersection of $N$
with any other open octant, we can conclude that $|M_\infty f (N)| = 0$, unless for some
$i = 1,\dots, d - 1$, $L\subset \{x_i=0\}\times\mathbb{R}$.
But this is a null set of lines, so  $M_\infty f$ is $ACL$
by Theorem \ref{t2} and Lemma \ref{BVAC}.
\qed

\end{document}